\newtheorem{thm}{Theorem}[section]
\newtheorem{lem}[thm]{Lemma}
\newtheorem{cor}[thm]{Corollary}
\newtheorem{prop}[thm]{Proposition}
\newtheorem*{thm*}{Theorem}
\theoremstyle{definition}
\newtheorem{example}[thm]{Example}
\newtheorem{rmk}[thm]{Remark}
\newcommand{\ses}[5]{ %
0 \to #1 \overset{#4}{\longrightarrow} #2 \overset{#5}{\longrightarrow} #3 \to 0 %
}
\renewcommand{\phi}{\varphi}
\DeclareMathOperator{\im}{im}
\DeclareMathOperator{\rad}{Rad}
\DeclareMathOperator{\soc}{Soc}
\DeclareMathOperator{\module}{mod}
\renewcommand{\mod}{\module}
\DeclareMathOperator{\Hom}{Hom}
\DeclareMathOperator{\Aut}{Aut}
\DeclareMathOperator{\Ext}{Ext}
\DeclareMathOperator{\aut}{Aut}
\DeclareMathOperator{\res}{res}
\DeclareMathOperator{\ind}{ind}
\DeclareMathOperator{\Char}{char}
\DeclareMathOperator{\End}{End}
\DeclareMathOperator{\ql}{ql}
\DeclareMathOperator{\Lie}{Lie}
\DeclareMathOperator{\sl2}{\mathfrak{sl}(2)}
\DeclareMathOperator{\Usl2}{U_0(\mathfrak{sl}(2))}
\DeclareMathOperator{\cx}{cx}
\newcommand{\N}{\mathbb{N}}
\newcommand{\Z}{\mathbb{Z}}
\renewcommand{\P}{\mathbb{P}}
\newcommand{\G}{\mathcal{G}}
\newcommand{\cN}{\mathcal{N}}
\newcommand{\cM}{\mathcal{M}}
\newcommand{\cH}{\mathcal{H}}
\newcommand{\cg}{\mathfrak{g}}
\newcommand{\cC}{\mathcal{C}}
\newcommand{\cS}{\mathcal{S}}
\newcommand{\cQ}{\mathcal{Q}}
\newcommand{\hideproofs}{\let\proof\hiddenproofs
	\let\endproof\endhiddenproofs}
\title{\large{Classification of indecomposable modules for finite group schemes of domestic representation type}}
\author{\small Dirk Kirchhoff}
\date{}
\begin{document}
\maketitle

\begin{abstract}\footnotesize{
\textsc{Abstract.}
We investigate the representation theory of domestic group schemes  $\G$ over an algebraically closed
field of characteristic $p>2$.
We present results about filtrations of induced modules, actions on support varieties, Clifford theory for
certain group schemes and applications of Clifford theory for strongly group graded algebras 
to the structure of Auslander-Reiten quivers. The combination of these results leads to the classification
of modules belonging to the principal blocks of the group algebra $k\G$.
}\end{abstract}

\section*{Introduction}

In representation theory we have a trichotomy of representation types. Over an algebraically closed field any algebra $A$
has either finite, tame or wild representation type. \\
We say that $A$ has finite representation type if $A$ possesses only finitely many isomorphism classes of indecomposable modules.
An algebra $A$ has tame representation type if it is not of finite representation type and if in each dimension almost all isomorphism classes of 
indecomposable modules occur in only finitely many one-parameter families. \\
The representation type of group algebras of finite groups was determined in \cite{bondarenko1982representation}. 
Let $k$ be a field of characteristic $p>0$ and $G$ be a $p$-group. Then the group algebra $kG$ has tame representation type
if and only if $p=2$ and $G$ is a dihedral, semidihedral or generalized quaternion group.
The classification of modules for a tame algebra can be a hard endeavour. For example up to now there is no such classification
for the quaternion group $Q_8$.\\
Another class of examples are the tame hereditary algebras (\cite{dlab1976indecomposable}). These algebras have the additional property, that the number
of one-parameter families is uniformly bounded. In general an algebra with this property is called of domestic representation type.
The only $p$-group with domestic group algebra is the Klein four group. Its representation theory is clearly
related to that of the 2-Kronecker quiver.\\
In the setting of group algebras for finite group schemes there occur more domestic group algebras.
We will call a finite group scheme $\G$ domestic if its group algebra $k\G:=(k[\G])^*$ is domestic. \\
In \cite{farnsteiner2012extensions} Farnsteiner classified the finite domestic group schemes 
over a field of characteristic $p>2$. Let $\G$ be a finite domestic group scheme. Then there is a tame
hereditary algebra which is Morita equivalent to all blocks of $k\G$.
Moreover, the principal blocks of these group schemes are isomorphic to the principal blocks of
certain domestic group schemes, the so-called amalgamated polyhedral group schemes.\\
The goal of this work is the classification of the indecomposable modules
of the amalgamated polyhedral group schemes. A foundation for this is Premets work (\cite{premet1991green}) on the representation theory of the restricted
Lie algebra $\sl2$. Farnsteiner started in \cite{farnsteiner2009group} to extend these results to the infinitesimal case,
the group schemes $SL(2)_1T_r$ for $r\geq 1$. 
These results will be summarized in the first section.
The missing part was a realization of the periodic $SL(2)_1T_r$-modules.
This gap will be closed in section \ref{sec: realizing modules in tubes}:\\

Let $\G$ be a finite group scheme and $\cN$ a normal subgroup scheme of $\G$ such that $\G/\cN$
is infinitesimal. For an $\cN$-module $Z$ Voigt \cite{voigt1977induzierte} introduced a filtration 
\[Z=N_0\subseteq N_1\subseteq N_2\subseteq\dots\subseteq N_{n-1}\subseteq k\G\otimes_{k\cN}Z\]
by $\cN$-modules and used it to give a generalized version of Clifford theory in form of a
splitting criterion of the short exact sequences
\[\ses{N_{l-1}}{N_l}{N_l/N_{l-1}}{}{}.\]
We develop in section \ref{sec: filtrations of induced modules} a criterion which ensures that none of these sequences split. 
For certain quasi-simple modules $Z$ this result implies that all constituents of the filtration belong to the same AR-component.
In section \ref{sec: realizing modules in tubes} we show when these assumptions are true for modules over $SL(2)_1$. Therefore we obtain new
realizations of those modules and are able to show when they have an $SL(2)_1T_r$-module structure.
These are exactly those modules which were missing in \cite{farnsteiner2009group}. \\
We then turn to a different topic in section \ref{sec:action on rank varieties}. If $\G$ is a finite
group scheme then $G:=\G(k)$ acts on the projectivized rank variety $\P(V_\cg)$ 
where $\cg$ denotes the restricted Lie algebra of $\G$.
This action gives nice properties for the stabilizers of periodic modules. 
If the action of $G$ on $\P(V_\cg)$ is faithful and the variety $\P(V_\cg)$ is
smooth and irreducible, then the stabilizer $G_M$ of any periodic module $M$ is contained in $GL_r(k)$, where
$r$ is the dimension of $\P(V_\cg)$. Especially if the connected component $\G^0$ of $\G$ 
is tame we obtain that the stabilizer $G_M$ is cyclic.\\
The goal of section \ref{sec:decomp induced module} is to prove a generalized Clifford theory decomposition result of induced modules
for certain group schemes. For this we need a normal subgroup scheme $\cN$ of a finite group scheme $\G$
which is contained in $\G^0$ such that $\G^0/\cN$ is multiplicative.
The indecomposable $\cN$-modules in consideration need to be restrictions of $\G$-modules and under the assumption of an additional
stability criterion the decomposition of the induced module corresponds to the decomposition of
$k(\G/\cN)$ into projective indecomposable modules.\\
In section \ref{sec:AR-quiver group graded} we pick up results of \cite{marcus1999representation} about the application of Clifford theory
over strongly group graded algebras to Auslander-Reiten quivers. We analyse the effects of the restriction
functor between components of the occurring Auslander-Reiten quivers for cyclic groups.
Especially if the components are tubes, we can give a relation between their ranks. \\
The last section combines the results of prior sections to describe
the structure of the amalgamated polyhedral group schemes. As an example, we will analyse the
representation theory of the amalgamated dihedral group schemes. The classification of modules for the
remaining amalgamated polyhedral group schemes can now be done with the same methods.\\
\\
All modules and algebras occurring in this work are supposed to be finite-dimensional
over $k$.

\section{The modules and Auslander-Reiten quiver of $SL(2)_1T_r$}
Let $k$ be an algebraically closed field of characteristic $p>2$. The group algebra $kSL(2)_1:=(k[SL(2)_1])^*$ (the dual
of the coordinate ring $k[SL(2)_1]$ ) is isomorphic to the restricted universal 
enveloping algebra $\Usl2$ of the restricted Lie algebra $\sl2$. There are one-to-one correspondences between the representations of $SL(2)_1$, $\Usl2$ and
the restricted representations of $\sl2$. The indecomposable representations of the restricted Lie algebra $\sl2$ were classified by Premet in \cite{premet1991green}.
In \cite[4.1]{farnsteiner2009group} Farnsteiner incorporated these results into the Auslander-Reiten theory of this algebra. Let $T\subseteq SL(2)$
be the standard torus of diagonal matrices. Following \cite[4.1]{farnsteiner2009group},
we will give here an overview of the representation theory of the group schemes $SL(2)_1T_r$, which is based on Premet's work. \\
Let $\{e,f,h\}$ denote the standard basis of $\sl2$.
For $d\in\N_0$ we consider the $(d+1)$-dimensional Weyl module $V(d)$ of highest weight $d$. These are rational $SL(2)$-modules which are obtained
by twisting the $2$-dimensional standard module with the Cartan involution ($x\mapsto -x^{tr}$) and taking its $d$-th symmetric power.
For $d\leq p-1$ we obtain in this way exactly the simple $\Usl2$-modules.
For $s\in\N$, $a\in\{0,\dots,p-2\}$, and $d=sp+a$ Premet introduced the maximal $\Usl2$-submodule $W(d)$ of $V(d)$ of dimension $sp$. These modules are stable 
under the action of the standard Borel subgroup $B\subseteq SL(2)$ of upper triangular matrices. \\
The group $SL(2,k)$ operates on $\Usl2$ via the adjoint representation and for each element $g\in SL(2,k)$ the space $g.W(d)$ is a $\Usl2$-module which is isomorphic
to $W(d)^g$, the space $W(d)$ with action twisted by $g^{-1}$. \\

Let $(\mathfrak{g},[p])$ be a restricted Lie algebra. 
We denote by  
 $V_{\mathfrak{g}}=\{x\in\mathfrak{g}\vert\: x^{[p]}=0\}$
the nullcone of $\mathfrak{g}$. For any $x\in V_{\mathfrak{g}}$ the algebra $U_0(kx)$ is a subalgebra of $U_0(\mathfrak{g})$. 
For any $U_0(\mathfrak{g})$-module $M$ we define its rank variety by
\[V_{\mathfrak{g}}(M):=\{x\in V_{\mathfrak{g}}\:\vert\: M{\vert_{U_0(kx)}} \:\text{is not projective}\}\cup\{0\}.\]
  The dimension of this rank variety is equal to the complexity $\cx_\cg(M)$ of the module $M$ (\cite{friedlander1986support}).
  For $\mathfrak{g}=\sl2$ the nullcone is given by
 \[V_{\sl2}=\{\left(\begin{smallmatrix}
 a & b \\
 c & -a
 \end{smallmatrix}\right)\vert a^2+bc=0\}\]
and for all $g\in SL(2,k)$ we have $V_{\sl2}(g.W(d))=k(geg^{-1})$.\\
 
Let $A$ be a self-injective $k$-algebra. We denote by $\Gamma_s(A)$ the stable Auslander-Reiten quiver of $A$.
The vertices of this valued quiver are the isomorphism classes of non-projective indecomposable $A$-modules and the arrows correspond
to the irreducible morphisms between these modules. Moreover, we have an automorphism $\tau_A$ of $\Gamma_s(A)$, called the Auslander-Reiten translation.
For a self-injective algebra the Auslander-Reiten translation is the composite $\nu\circ\Omega^2$, where $\nu$ denotes the Nakayama functor of $\mod A$ and
$\Omega$ the Heller shift of $\mod A$. For further details we refer to \cite{auslander1997representation} and \cite{assem2006elements}.\\
Let $\Theta\subseteq\Gamma_s(A)$ be a connected component. Thanks to the Struktursatz of Riedtmann \cite{riedtmann1980algebren}, there
is an isomorphism of stable translation quivers $\Theta\cong\Z[T_\Theta]/\Pi$ where $T_\Theta$ denotes a directed tree and $\Pi$ is an admissible 
subgroup of $\Aut(\Z[T_\Theta])$. The underlying undirected tree $\overline{T}_\Theta$ is called the tree class of $\Theta$.
If $\Theta$ has tree class $A_\infty$, then there is for each module $M$ only one sectional path to the end of the component
(\cite[(VII)]{auslander1997representation}). The length of this path is called the quasi-length $\ql(M)$ of $M$.
The modules of quasi-length $1$ are also called quasi-simple. Components of the form $\Z[A_\infty]/(\tau^n)$, $n\geq 1$,
are called tubes of rank $n$. These components contain for each $l\geq1$ exactly $n$ modules of quasi-length $l$.
Tubes of rank $1$ are also called homogeneous tubes and all other tubes are called exceptional tubes.\\
 
Let $\mathfrak{b}$ be the Borel subalgebra of $\sl2$ which is generated by $h$ and $e$. For each $i\in\{0,\dots,p-1\}$ let $k_i$ be the one-dimensional $U_0(\mathfrak{b})$-module
with $h.1=i$ and $e.1=0$. Then the induced $\Usl2$-module $Z(i):=\Usl2\otimes_{U_0(\mathfrak{b})}k_i$ is called a baby Verma module of highest weight $i$.
\begin{lem}\cite[4.1.2]{farnsteiner2009group}
Let $s\in\N$, $a\in\{0,\dots,p-2\}, d=sp+a$ and $g\in SL(2,k)$. Then the AR-component $\Theta\subseteq\Gamma_s(\sl2)$ containing $g.W(d)$ is a homogeneous tube
with quasi-simple module $Z(a)^g$. Moreover, we have $\ql(g.W(d))=s$.
\end{lem}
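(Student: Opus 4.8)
The plan is to analyze the component $\Theta$ containing $g.W(d)$ by first reducing to the case $g = e$ (the identity), since the adjoint action of $SL(2,k)$ is an automorphism of $\Usl2$ and hence induces an automorphism of $\Gamma_s(\sl2)$ that sends the component of $W(d)$ to the component of $g.W(d)$ and sends $Z(a)$ to $Z(a)^g$; this automorphism also preserves quasi-length. So it suffices to show that $W(d)$ lies in a homogeneous tube with quasi-simple module $Z(a)$ and that $\ql(W(d)) = s$.

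The next step is to determine the tree class and rank of $\Theta$. Since $\sl2$ is a tame restricted Lie algebra, every component of $\Gamma_s(\sl2)$ is a tube $\Z[A_\infty]/(\tau^n)$, a homogeneous tube $\Z[A_\infty]/(\tau)$, or of Euclidean type (and by Premet's classification the Euclidean components account for the finitely many non-periodic modules in the $\Usl2$-blocks of non-simple representation type). The module $W(d)$ is periodic: its rank variety $V_{\sl2}(W(d)) = ke$ is a line, so $\cx_{\sl2}(W(d)) = 1$, which forces $W(d)$ to be periodic under $\Omega$ and hence to sit in a tube. To see the tube is homogeneous, I would use the fact that $W(d)$ is stable under the Borel $B$ (and in fact the torus $T$ acts on $W(d)$), which constrains the stabilizer acting on the associated point of the projectivized rank variety; alternatively, one identifies $\Omega W(d)$ directly — since $V(d)$ is projective over $\Usl2$ only when $d \geq p-1$ fails, but here one uses the short exact sequence $0 \to W(d) \to V(d) \to V(d)/W(d) \to 0$ with $V(d)/W(d) \cong V(a)$ (dimension $a+1$) together with the structure of the projective cover of $V(a)$ to compute that $\Omega^2 W(d) \cong W(d)$, giving $\tau$-periodicity of period $1$ and hence a homogeneous tube.

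To identify the quasi-simple module at the mouth of this homogeneous tube, I would exhibit an irreducible (or more precisely a chain of irreducible) morphism connecting $W(d)$ to $Z(a)$, or directly compute the quasi-length. The baby Verma module $Z(a)$ has dimension $p$, and a natural guess is that there is a filtration $Z(a) = M_1 \subseteq M_2 \subseteq \cdots \subseteq M_s$ inside (a module related to) $W(d)$ with each quotient quasi-simple, realizing $W(d) = M_s$ as the module of quasi-length $s$ in the tube whose mouth is $Z(a)$; this matches $\dim W(d) = sp = s \cdot \dim Z(a)$, consistent with all modules of quasi-length $l$ in this homogeneous tube having dimension $lp$. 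Concretely one checks $Z(a)$ is quasi-simple by showing its rank variety is also $ke$ (so it lies in the same tube) and that it admits no proper submodule that is a $\Usl2$-module in the relevant component — or one cites the known almost-split sequence structure for baby Verma modules over $\Usl2$. The main obstacle I expect is precisely the bookkeeping needed to pin down the quasi-length exactly as $s$: showing the tube is homogeneous and contains both $W(d)$ and $Z(a)$ is comparatively soft (rank-variety and complexity arguments), but proving that the unique sectional path from $Z(a)$ up to $W(d)$ has length exactly $s$ — rather than merely that $W(d)$ sits somewhere in the tube — requires either an explicit inductive construction of the chain of irreducible maps $Z(a) = W_1 \to W_2 \to \cdots \to W_s = W(d)$ (likely via the submodule structure of the $V(d)$'s or via repeated pushout/pullback along the almost-split sequences) or a dimension-counting argument using that the module of quasi-length $l$ in this tube has dimension exactly $lp$, which itself needs justification from the periodicity computation.
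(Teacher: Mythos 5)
This lemma is not proved in the paper at all: it is imported verbatim from Farnsteiner's work, which in turn rests on Premet's explicit classification of the indecomposable $U_0(\mathfrak{sl}(2))$-modules and on the computation of the almost split sequences in each block. So there is no in-paper argument to measure you against, and your outline has to stand on its own. Your first two steps are sound: the reduction to $g=1$ via the adjoint automorphism (which is an equivalence commuting with $\tau$, hence preserves components and quasi-lengths and carries $Z(a)$ to $Z(a)^g$), and the placement of $W(d)$ in a tube via $V_{\mathfrak{sl}(2)}(W(d))=ke$, complexity $1$, periodicity, and Happel--Preiser--Ringel.

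The rest is a plan rather than a proof, and the gaps are exactly where the content of the lemma lies. First, homogeneity: you offer two routes and carry out neither. The stabilizer/Borel remark does not by itself control the rank of the tube, and the $\Omega$-route requires actually computing $\Omega^2 W(d)\cong W(d)$ from the projective covers in the block of $V(a)$ (and then using that $U_0(\mathfrak{sl}(2))$ is symmetric, so $\tau=\Omega^2$); your sentence about $V(d)$ being projective is also garbled ($V(d)$ is projective precisely when $d\equiv -1 \pmod p$, which is excluded here since $a\le p-2$). Second, the identification of $Z(a)$ as the mouth and the equality $\ql(W(d))=s$ are left open, as you yourself say. The dimension heuristic ($\dim = lp$ for quasi-length $l$) is circular: it presupposes that the mouth has dimension $p$ and that dimensions grow additively up the tube, which is what one is trying to establish. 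What is actually needed is an explicit chain of non-split extensions $0\to W(sp+a)\to W((s+1)p+a)\to Z(a)\to 0$ (or the dual picture inside the $V(d)$'s) shown to consist of irreducible maps, i.e.\ essentially the construction that Proposition \ref{filtration of induced quasi simple modules} of this paper axiomatizes. As written, the proposal identifies the right skeleton but does not prove the lemma.
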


The Auslander-Reiten quiver of each non-simple block of $kSL(2)_1T_r$ consists of two components of
type $\Z[\tilde{A}_{p^{r-1},p^{r-1}}]$, four exceptional tubes $\Z[A_\infty]/(\tau^{p^{r-1}})$
and infinitely many homogeneous tubes $\Z[A_\infty]/(\tau)$.\\
Denote by $w_0:=\left(\begin{smallmatrix}
0 & 1 \\
-1 & 0
\end{smallmatrix}\right)$ the standard generator of the Weyl group of $SL(2)$.
The spaces $W(sp+a)$ and $w_0.W(sp+a)$ are stable under the action of $SL(2)_1T_r$ and therefore already $SL(2)_1T_r$-modules.
These modules (and certain twists of them) belong to the exceptional tubes. There was no realization given in
 \cite{farnsteiner2009group}  of the modules belonging to homogeneous tubes. But the following was shown:
\begin{lem}\cite[4.3]{farnsteiner2009group}\label{X(i,g,l)}
For each $l\in\N$, $i\in\{0,\dots,p-2\}$ and $g\in SL(2)\setminus(B\cup w_0B)$ there is, up to isomorphism,
 a unique indecomposable $SL(2)_1T_r$-module $X(i,g,l)$ 
with $\res_{SL(2)_1}X(i,g,l)\cong g.W(lp^r+i)$.
\end{lem}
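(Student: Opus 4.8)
The plan is to realize $X(i,g,l)$ by an induction-and-descent argument, using the tame hereditary comparison only to pin down uniqueness. First I would treat the module $g.W(lp^r+i)$ over $SL(2)_1$. By the lemma cited from \cite[4.1.2]{farnsteiner2009group}, this module lies in a homogeneous tube of $\Gamma_s(\sl2)$ with quasi-simple module $Z(i)^g$ and has quasi-length $l$ (taking $s=lp^{r-1}$ in the notation there, so that $sp=lp^r$ and $d=lp^r+i$). The key point is that $SL(2)_1$ is normal in $SL(2)_1T_r$ with infinitesimal quotient $T_r$, so Voigt's filtration and the non-splitting criterion of section~\ref{sec: filtrations of induced modules} apply to $Z:=g.W(lp^r+i)$. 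I would invoke exactly that machinery: since $g.W(lp^r+i)$ is (a twist of) a module whose quasi-simple socle $Z(i)^g$ satisfies the hypotheses worked out in section~\ref{sec: realizing modules in tubes}, all constituents of the Voigt filtration of $k(SL(2)_1T_r)\otimes_{kSL(2)_1}(g.W(lp^r+i))$ lie in one AR-component, none of the defining sequences split, and the induced module is therefore indecomposable as an $SL(2)_1T_r$-module. Call a suitable indecomposable summand — here the whole induced module, once one checks the filtration has the right length — the candidate $X(i,g,l)$.

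Next I would verify the restriction statement $\res_{SL(2)_1}X(i,g,l)\cong g.W(lp^r+i)$. The inclusion $g.W(lp^r+i)=N_0\subseteq k(SL(2)_1T_r)\otimes_{kSL(2)_1}(g.W(lp^r+i))$ shows one copy sits inside the restriction; the reverse containment follows because $[SL(2)_1T_r:SL(2)_1]$ is controlled by $T_r$, which is multiplicative, so by a dimension count (or by the structure of the Voigt filtration, whose successive quotients are twists of $g.W(lp^r+i)$ by characters of $T_r$ that do not survive the infinitesimal restriction) the only $SL(2)_1$-constituent that appears is $g.W(lp^r+i)$ itself, each graded piece contributing the same isomorphism type. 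Here one uses that $g\in SL(2)\setminus(B\cup w_0B)$ guarantees $V_{\sl2}(g.W(lp^r+i))=k(geg^{-1})$ is a line not fixed by the torus, which is what forces the twists to be genuinely distinct and the filtration to be non-split.

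For uniqueness: suppose $M$ is any indecomposable $SL(2)_1T_r$-module with $\res_{SL(2)_1}M\cong g.W(lp^r+i)$. By Frobenius reciprocity $M$ is a quotient (and, dually, a submodule) of the induced module, and since the latter is indecomposable with the stated restriction, $M$ must be isomorphic to it. Alternatively one can argue inside the block: the non-simple blocks of $kSL(2)_1T_r$ are Morita equivalent to a tame hereditary algebra, the homogeneous tubes are indexed by the $P^1$ of rank-variety lines together with the torus-twist, and the restriction functor to the homogeneous tube of $\Gamma_s(\sl2)$ containing $g.W(lp^r+i)$ is injective on isomorphism classes of quasi-length-$l$ modules in a homogeneous tube — so the restriction datum determines $M$.

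The main obstacle, I expect, is the non-splitting step: one must check that the quasi-simple module $Z(i)^g$ (equivalently, the socle of $g.W(lp^r+i)$ as an $SL(2)_1$-module) really does satisfy the hypotheses of the criterion from section~\ref{sec: filtrations of induced modules}, i.e. that the relevant $\Ext^1$ groups between the Voigt constituents are nonzero and that the constituents genuinely lie in the same homogeneous tube rather than collapsing or spreading across several components. This is precisely where the condition $g\notin B\cup w_0B$ enters — it ensures $geg^{-1}$ avoids the torus-fixed lines $ke,kf$ so that all $p^r$ (or $p^{r-1}$, depending on normalization) torus-twists of the constituent are pairwise non-isomorphic, forcing the filtration to have full length $p^{r-1}$ and the induced module to be indecomposable. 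Everything else is bookkeeping with dimensions and with the structure of $kT_r$ as a local algebra.
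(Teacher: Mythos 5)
This lemma is quoted from \cite[4.3]{farnsteiner2009group}; the paper offers no proof of it, only the constructive counterpart in Section~\ref{sec: realizing modules in tubes} (Proposition~\ref{new realization of sl(2) modules} and Theorem~\ref{extending x(i,g,l)}). Measured against either the citation or that construction, your proposal has a fatal gap: your candidate for $X(i,g,l)$ is the full induced module $k(SL(2)_1T_r)\otimes_{kSL(2)_1}g.W(lp^r+i)$, but this module has dimension $p^{r-1}\cdot lp^r$, so for $r\geq 2$ its restriction to $SL(2)_1$ cannot be $g.W(lp^r+i)$ — the Voigt filtration has $p^{r-1}$ layers each contributing a copy of $g.W(lp^r+i)$, exactly as you yourself note one sentence later, which contradicts your claimed ``reverse containment.'' The brick/indecomposability statement you import from \cite[4.2.3, 4.3]{farnsteiner2009group} applies to the induction of the $p$-dimensional quasi-simple module $Z(i)^g$, not of $g.W(lp^r+i)$; in fact, once the lemma is known, $k\G\otimes_{k\cN}g.W(lp^r+i)\cong X(i,g,l)^{p^{r-1}}$ (cf.\ the example closing Section~\ref{sec:decomp induced module}), so your induced module is very far from being a brick. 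Relatedly, $g.W(lp^r+i)$ has quasi-length $lp^{r-1}$, not $l$, by \cite[4.1.2]{farnsteiner2009group}.

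Two further problems. First, the genuine content of the lemma — that $g.W(lp^r+i)$ carries \emph{any} $SL(2)_1T_r$-structure when $g\notin B\cup w_0B$, so that the subspace is not $T_r$-stable and the structure must be a nontrivial twist — is nowhere established in your argument; the intermediate Voigt filtration terms $N_j$ are only $\cN$-modules, so they cannot be taken as the extension. Second, the uniqueness step is invalid as written: a module that is both a quotient and a submodule of an indecomposable module need not be isomorphic to it, and here the premise (indecomposability of the induced module) fails anyway. Since $\G/\cN\cong\mu_{(p^{r-1})}$ is linearly reductive, separability of $k\G:k\cN$ does give $M\mid k\G\otimes_{k\cN}\res M$, but turning that into uniqueness requires knowing the indecomposable summands of the induced module, which is essentially the statement being proved. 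The route the paper actually takes is to induce the quasi-simple $Z(i)^g$ and apply Proposition~\ref{filtration of induced quasi simple modules}: the whole induced module realizes $X(i,g,1)$ (its restriction is $g.W(p^r+i)$, the quasi-length-$p^{r-1}$ module in the tube), and $X(i,g,l)$ for $l>1$ is obtained in Theorem~\ref{extending x(i,g,l)} as a proper filtration term of an induction up to $SL(2)_1T_{r+s}$, restricted back down — not as an induced $SL(2)_1T_r$-module at all.
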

We will see in section \ref{sec: realizing modules in tubes} how to realize these modules.
The $SL(2)_1T_r$-modules are then classified in the following way:
\begin{thm}\cite[4.3.1]{farnsteiner2009group}
Let $C\subseteq SL(2,k)$ be a set of representatives of $SL(2,k)/B$ such that $\{1,w_0\}\subseteq C$ and $M$ be a non-projective indecomposable $SL(2)_1T_r$-module.
Then $M$ is isomorphic to a module of the following list of pairwise non-isomorphic $SL(2)_1T_r$-modules:
\begin{itemize}
\item $V(d)\otimes_k k_\lambda, V(d)^*\otimes_k k_\lambda$, $V(i)\otimes_k k_\lambda$ for $d\geq p$, $\lambda\in X(\mu_{(p^{r-1})})$,
 $d\:{\not\equiv}-1\;(\mod\;p)$ and $0\leq i\leq p-1$.
	 (Modules belonging to Euclidean components)
\item $w_0^j.W(d)\otimes_k k_\lambda$ for $j\in\{0,1\}$, $d=sp+a$ with $a\in\{0,\dots,p-2\}$, $s\in\N$ and $\lambda\in X(\mu_{(p^{r-1})})$.
	 (Modules belonging to exceptional tubes)
\item $X(i,g,l)$ for $g\in C\setminus\{1,w_0\}$ and $d=sp+a$ with $l\in\N$ and $i\in\{0,\dots,p-2\}$.
	 (Modules belonging to homogeneous tubes)
\end{itemize}
\end{thm}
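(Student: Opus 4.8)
The plan is to sort the non-projective indecomposable $SL(2)_1T_r$-modules according to the Auslander--Reiten component in which they lie, using the description of $\Gamma_s(kSL(2)_1T_r)$ recalled above: such a module $M$ has complexity $1$ or $2$; the complexity-$2$ modules occupy the Euclidean components $\Z[\tilde{A}_{p^{r-1},p^{r-1}}]$, while the periodic modules occupy either an exceptional tube $\Z[A_\infty]/(\tau^{p^{r-1}})$ or a homogeneous tube $\Z[A_\infty]/(\tau)$ of a non-simple block, the simple blocks containing only a projective-simple module. The essential tool throughout is Clifford theory for the normal subgroup scheme $SL(2)_1\trianglelefteq SL(2)_1T_r$, whose quotient $SL(2)_1T_r/SL(2)_1\cong\mu_{(p^{r-1})}$ is diagonalizable: restriction to $SL(2)_1$ and the character twists $(-)\otimes_k k_\lambda$, $\lambda\in X(\mu_{(p^{r-1})})$ (there being $p^{r-1}$ of these), control how $SL(2)_1$-modules lift.

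For the Euclidean components, one starts from the observation that an indecomposable $SL(2)_1T_r$-module $M$ of complexity $2$ restricts to an $SL(2)_1$-module whose indecomposable summands again have complexity $2$, hence lie --- by Premet's classification --- among the Weyl modules $V(d)$, the dual Weyl modules $V(d)^*$ (for $d\geq p$, $d\not\equiv-1\;(\mod\;p)$) and the simple modules $V(i)$, $0\leq i\leq p-1$. As these are restrictions of rational $SL(2)$-modules they already carry an $SL(2)_1T_r$-structure, and Clifford theory shows that any $SL(2)_1T_r$-module with a prescribed such restriction is one of its $p^{r-1}$ character twists; conversely each $V(d)\otimes_k k_\lambda$, $V(d)^*\otimes_k k_\lambda$, $V(i)\otimes_k k_\lambda$ restricts to a complexity-$2$ indecomposable, hence lies in a Euclidean component. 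One then checks, using the Heller and Auslander--Reiten translates together with the combinatorics of $\Z[\tilde{A}_{p^{r-1},p^{r-1}}]$, that these modules fill both Euclidean components of each non-simple block.

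For the tubes one argues with periodicity. A periodic $SL(2)_1T_r$-module $M$ restricts to a periodic $SL(2)_1$-module, which by Lemma~\ref{X(i,g,l)} and Premet's list is, up to a character twist, of the form $g.W(d)$ with $d=sp+a$, $a\in\{0,\dots,p-2\}$, $s\in\N$ and $g\in SL(2,k)$; moreover $g.W(d)$ depends only on the coset $gB$. If $gB\in\{B,w_0B\}$, then $g.W(d)$ is $W(d)$ or $w_0.W(d)$, which is already $SL(2)_1T_r$-stable, and Clifford theory forces $M\cong w_0^j.W(d)\otimes_k k_\lambda$ for some $j\in\{0,1\}$, $\lambda\in X(\mu_{(p^{r-1})})$; by the analysis of the restriction functor between tubes in Section~\ref{sec:AR-quiver group graded}, these modules fill precisely the four exceptional tubes of each non-simple block (their quasi-simple modules being the $w_0^j.W(p+a)\otimes_k k_\lambda$, which one counts against the four rank-$p^{r-1}$ tubes, with $a$ ranging over the block's two weights). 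If instead $gB\notin\{B,w_0B\}$, then Section~\ref{sec: realizing modules in tubes} together with Lemma~\ref{X(i,g,l)} identifies $M\cong X(i,g,l)$, where $\res_{SL(2)_1}X(i,g,l)\cong g.W(lp^r+i)$; since $\ql_{SL(2)_1}(g.W(lp^r+i))=lp^{r-1}$, the restriction analysis of Section~\ref{sec:AR-quiver group graded} shows that the ambient $SL(2)_1T_r$-component is a homogeneous tube in which $X(i,g,l)$ has quasi-length $l$, and, replacing $g$ by its representative in $C$, we obtain $M\cong X(i,g,l)$ for a unique $g\in C\setminus\{1,w_0\}$.

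Finally the listed modules are shown to be pairwise non-isomorphic: modules from components of different types are separated by complexity or by tube rank; within the Euclidean family the restriction to $SL(2)_1$ together with the character $\lambda$ separates them, using that $V(d)$ and $V(d')^*$ lie in distinct $SL(2)_1$-components; within the exceptional family the rank variety $V_{\sl2}$ recovers $j$, the dimension recovers $s$, the quasi-simple beneath $M$ recovers $a$, and the $T_r$-weights recover $\lambda$; within the homogeneous family the uniqueness is Lemma~\ref{X(i,g,l)}, and distinct $g\in C$ yield distinct rank varieties $V_{\sl2}(g.W(lp^r+i))=k(geg^{-1})$. I expect the main obstacle to be the Clifford-theoretic lifting step --- establishing that every $SL(2)_1T_r$-module restricting to one of the $SL(2)_1$-modules on the list is a character twist of a listed module, with no additional lifts --- together with the accompanying count of quasi-simple modules in each tube, since it is precisely this bookkeeping that upgrades the explicit list to an exhaustive one; granting it, the partition of $\Gamma_s(kSL(2)_1T_r)$ into its components finishes the proof.
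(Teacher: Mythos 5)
This theorem is imported verbatim from \cite[4.3.1]{farnsteiner2009group}; the present paper states it as background and gives no proof, so there is no in-text argument to measure your attempt against. Your sketch does follow the strategy that the cited source (and the surrounding sections of this paper) actually employ: separate modules by complexity, restrict to the normal subgroup scheme $SL(2)_1$, invoke Premet's classification of indecomposable $U_0(\mathfrak{sl}(2))$-modules, and lift back along the multiplicative quotient $\mu_{(p^{r-1})}$ by Clifford theory, with the rank variety $V_{\mathfrak{sl}(2)}(g.W(d))=k(geg^{-1})$ and the tube combinatorics separating the resulting families. In that sense the architecture is right.

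However, two load-bearing steps are asserted rather than established, and the second is the heart of the matter. First, you never justify that $\res_{SL(2)_1}M$ is indecomposable for an indecomposable $SL(2)_1T_r$-module $M$ (this is \cite[2.1.2]{farnsteiner2009group}, used repeatedly in the present paper); without it your case division according to the coset $gB$, and the identification of $\res_{SL(2)_1}M$ with a single $g.W(d)$ rather than a sum, does not get started. Second --- and you flag this yourself as ``the main obstacle'' --- the exhaustiveness of the lifting: for $g\notin B\cup w_0B$ you must show that the only $d$ arising as $\res_{SL(2)_1}M\cong g.W(d)$ are those of the form $d=lp^r+i$, i.e.\ that only every $p^{r-1}$-th module of the homogeneous $SL(2)_1$-tube carries an $SL(2)_1T_r$-structure. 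Lemma \ref{X(i,g,l)} gives existence and uniqueness for those particular $d$ but does not exclude the remaining quasi-lengths, and your appeal to it here is circular. Closing this requires the analysis of induced modules and their endomorphism rings that Sections \ref{sec: filtrations of induced modules}, \ref{sec: realizing modules in tubes} and \ref{sec:decomp induced module} of this paper develop (Voigt's filtration, the brick property of $k\G\otimes_{k\cN}Z(a)^g$, and the identification of $N_l$ with $g.W((l+1)p+a)$). As it stands, your list is plausible but not proved complete; deferring precisely the step that upgrades the explicit list to an exhaustive one leaves a genuine gap.
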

\section{Filtrations of induced modules}\label{sec: filtrations of induced modules}
Let $k$ be a field of characteristic $p>0$, $\mathcal{G}$ a finite group scheme and $\mathcal{N}\subseteq\mathcal{G}$ a normal subgroup scheme
such that $\mathcal{G}/\mathcal{N}$ is infinitesimal.
Denote by $k\G:=k[\G]^*$ the group algebra of $\G$.
Let $J$ be the kernel of the canonical projection $k[\mathcal{G}]\rightarrow k[\mathcal{N}]$.
The algebra $k[\mathcal{G}]^{\mathcal{N}}\cong k[\G/\cN]$ (see \cite[I.5.5(6)]{jantzen2007representations}) is local and consequently the ideal $I:=k[\mathcal{G}]^{\mathcal{N}}\cap J$ is nilpotent. 
Moreover, $I$ is the augmentation ideal of $k[\G]^\cN$ and therefore $J=Ik[\G]$ by 
\cite[2.1]{waterhouse1979introduction}.
Hence $J$ is also nilpotent.
Thus setting $H_l:=(J^{l+1})^\perp=\{v\in k\G\:\vert\: v(J^{l+1})=(0)\}$ gives us an ascending filtration of $k\G$ consisting of 
$(k\mathcal{N},k\mathcal{N})$-bimodules
\[(0)=H_{-1}\subseteq k\mathcal{N}=H_0\subseteq H_1\subseteq\dots\subseteq H_n=k\G.\]
Now let $Z$ be an $\mathcal{N}$-module. 
Due to \cite[9.5]{voigt1977induzierte}, the canonical maps 
$\iota_l:H_l\otimes_{k\mathcal{N}}Z\rightarrow k\G\otimes_{k\mathcal{N}}Z$
are injective. Set $N_l:=\im\iota_l$. In \cite[9]{voigt1977induzierte} Voigt introduced the following
ascending filtration by  $\cN$-modules of the $\G$-module $N_{n}=k\G\otimes_{k\mathcal{N}}Z$:
\[(0)=N_{-1}\subseteq Z\cong N_0\subseteq N_1\subseteq\dots\subseteq N_{n}.\]
The algebra $k\G$ becomes a $(k[\G]^\cN,k\cN)$-bimodule via $(x.h)(y)=h(yx)$ and $h\bullet h'=hh'$ for all
$h\in k\G, h'\in k\cN, x\in k[\G]^\cN$ and $y\in k[\G]$. Hence the induced module $k\G\otimes_{k\cN} Z$ has
also a $k[\G]^\cN$-module structure. Voigt has given an alternative description of the modules
occurring in the above filtration:
\begin{prop}\label{filtration of induced}\cite[9.6]{voigt1977induzierte}
	In the above situation we get the following equality:
	\[N_l=\{n\in N_n\: \vert\:\forall f\in I^{l+1}: f.n=0\}.\]
	Moreover, the $\cN$-module $N_l/N_{l-1}$ is isomorphic to a direct sum of $\dim_k H_l/H_{l-1}$ copies of $Z$.
\end{prop}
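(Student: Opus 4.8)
The plan is to establish the set-theoretic description of $N_l$ first, and then deduce the statement about the subquotients $N_l/N_{l-1}$ from it. For the first part, I would exploit the interplay between the two filtrations: the bimodule filtration $H_\bullet$ of $k\G$ defined via annihilators of powers of $J$, and the submodule filtration $N_\bullet$ of the induced module obtained as images of $H_l \otimes_{k\cN} Z$. The key observation is that the $k[\G]^\cN$-action on $k\G\otimes_{k\cN}Z$ is induced from the $k[\G]^\cN$-action on $k\G$ given by $(x.h)(y)=h(yx)$, and that this action interacts predictably with the annihilator filtration: for $h\in k\G$ one has $f.h = 0$ for all $f\in I^{l+1}$ precisely when $h$ is ``supported'' on $J^{l+1}$ in the appropriate dual sense, i.e. $h\in (I^{l+1}k[\G])^\perp$. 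Since $J = Ik[\G]$, we get $J^{l+1} = I^{l+1}k[\G]$, so $(J^{l+1})^\perp = H_l$ is exactly the annihilator of $I^{l+1}$ under this action. Thus $H_l = \{h\in k\G \mid f.h = 0 \ \forall f\in I^{l+1}\}$, and I would transport this equality across the tensor product.

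The first main step is therefore to verify the identity $H_l = \{h \in k\G : I^{l+1}.h = 0\}$ directly from the definitions, using $J = Ik[\G]$ and the formula for the $k[\G]^\cN$-action on $k\G$. The second step is to show that applying $-\otimes_{k\cN} Z$ is compatible with taking $I^{l+1}$-annihilators, i.e. that $N_l = \im(H_l\otimes_{k\cN}Z) = \{n\in N_n : I^{l+1}.n = 0\}$. One inclusion is immediate: elements of $H_l\otimes_{k\cN}Z$ are killed by $I^{l+1}$ since $H_l$ is. For the reverse inclusion I would use that $k\G$ is free (hence flat) as a right $k\cN$-module — this is where $\cN$ being a subgroup scheme enters — so that tensoring the exact sequence $0\to H_l\to k\G\to k\G/H_l\to 0$ with $Z$ stays exact, identifying $N_n/N_l$ with $(k\G/H_l)\otimes_{k\cN}Z$; then one checks that multiplication by a suitable power of $I$ acts injectively enough on this quotient to force any $n$ annihilated by $I^{l+1}$ to lie in $N_l$. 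Here the local structure of $k[\G]^\cN$ with nilpotent augmentation ideal $I$, together with Voigt's injectivity result $\iota_l$ that is already quoted, does the bookkeeping.

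For the final assertion about $N_l/N_{l-1}$, the plan is purely formal once the first part is in hand. Tensoring the short exact sequence of $(k[\G]^\cN, k\cN)$-bimodules $0\to H_{l-1}\to H_l\to H_l/H_{l-1}\to 0$ with $Z$ over $k\cN$ — again using flatness of $k\G$, hence of each $H_l$, over $k\cN$ — yields $N_l/N_{l-1}\cong (H_l/H_{l-1})\otimes_{k\cN}Z$ as $\cN$-modules. It remains to see that $H_l/H_{l-1}$ is a \emph{trivial} $k\cN$-bimodule, i.e. that the right $k\cN$-action (the only one relevant after tensoring) factors through the augmentation, so that $(H_l/H_{l-1})\otimes_{k\cN}Z \cong (H_l/H_{l-1})\otimes_k (k\otimes_{k\cN}Z)$. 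This follows because $H_l/H_{l-1}$ is a subquotient of $k\G$ ``concentrated in degree $l$'' relative to the $I$-adic filtration coming from $k[\G]^\cN$, on which $k\cN$ acts by bimodule automorphisms that are trivial modulo the filtration step; concretely, $H_l\bullet h' \subseteq H_l$ and the induced action on the graded piece is the scalar action. Counting dimensions then gives $\dim_k H_l/H_{l-1}$ copies of $Z$.

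I expect the main obstacle to be the \textbf{reverse inclusion} $\{n\in N_n : I^{l+1}.n=0\}\subseteq N_l$ in the second step: the easy direction and the formal consequences are routine, but pinning down exactly why annihilation by $I^{l+1}$ cannot produce elements outside $\im(H_l\otimes_{k\cN}Z)$ requires care with the flatness of $k\G$ over $k\cN$ and with the way the local algebra $k[\G]^\cN$ acts through the tensor product — this is precisely the content for which Voigt's injectivity of the $\iota_l$ is the right tool.
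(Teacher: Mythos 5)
The first thing to note is that the paper offers no proof of this proposition at all: it is quoted directly from \cite[9.6]{voigt1977induzierte}, so your attempt is being measured against a citation rather than an argument. Your opening step is correct and clean: since $k[\G]$ is commutative, $J^{l+1}=(Ik[\G])^{l+1}=I^{l+1}k[\G]$, and unwinding $(x.h)(y)=h(yx)$ gives $H_l=(J^{l+1})^\perp=\{h\in k\G\mid I^{l+1}.h=0\}$; the inclusion $N_l\subseteq\{n\in N_n\mid I^{l+1}.n=0\}$ then follows because the $k[\G]^\cN$-action on the induced module is $f.(h\otimes z)=(f.h)\otimes z$. Up to this point the sketch is sound.

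The reverse inclusion, which you correctly identify as the crux, is where the plan as written fails. You propose to pass to $N_n/N_l\cong(k\G/H_l)\otimes_{k\cN}Z$ and argue that multiplication by $I^{l+1}$ acts "injectively enough" there. It does not: any $h\in H_{2l+1}$ satisfies $I^{l+1}.h\subseteq H_l$ (since $I^{2l+2}.h=0$), so the annihilator of $I^{l+1}$ in $N_n/N_l$ contains $N_{2l+1}/N_l\neq(0)$. The hypothesis you must exploit is annihilation inside $N_n$, not in the quotient, and flatness of $k\G$ over $k\cN$ alone cannot bridge that. What is actually needed is a decomposition $k\G\cong(k[\G]^\cN)^*\otimes_k k\cN$ that is simultaneously an isomorphism of right $k\cN$-modules and of $k[\G]^\cN$-modules (the dual of a normal-basis isomorphism $k[\G]\cong k[\G]^\cN\otimes_k k[\cN]$); under it $H_l$ becomes $(I^{l+1})^\perp\otimes_k k\cN$, the induced module becomes $(k[\G]^\cN)^*\otimes_k Z$ with $I^{l+1}$ acting only on the first factor, and the annihilator computation becomes transparent. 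Your second step has a further genuine error: the right $k\cN$-action on $H_l/H_{l-1}$ does \emph{not} factor through the augmentation (already $H_0/H_{-1}=k\cN$ carries the regular right action), and even if it did, $(H_l/H_{l-1})\otimes_k(k\otimes_{k\cN}Z)$ would be a direct sum of copies of the coinvariants $k\otimes_{k\cN}Z$, not of $Z$. The correct mechanism is that $H_l/H_{l-1}$ is free as a right $k\cN$-module, so the subquotient splits into summands $c\otimes Z$, and then the normality of $\cN$ (Clifford theory) together with $\G/\cN$ being infinitesimal identifies each summand with a twist of $Z$ that is isomorphic to $Z$. Neither of these two ingredients appears in your sketch, so the proposal does not yet constitute a proof.
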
	
	Let $f_1,\dots,f_{q_l}$ be generators of the $k[\G]^\cN$-ideal $I^{l+1}$ and 
	\[v_l:k\G\rightarrow(k\G)^{q_l}, h\mapsto (f_1.h,\dots,f_{q_l}.h).\]
	By the proof of \cite[9.6]{voigt1977induzierte}, the map $u_l:=v_l\otimes id_Z:N_n\rightarrow N_n^{q_l}$
	is $\cN$-linear and has kernel $N_l$.\\
	For $1\leq j\leq n$ we define the $\cN$-linear maps $p_{l,j}:={u_l}_{\vert N_j}$. Note that these maps depend on
	the choice of the generators $f_1,\dots,f_{q_l}$. In the case that $I$ is a principal ideal, we fix a generator $f$ of $I$
	and will always choose $f^{l+1}$ as the generator of $I^{l+1}$.
\begin{prop}\label{filtration of induced with one gen}
    Assume that $I$ is a principal ideal. Then the following holds:
    \begin{enumerate}[(a)]
    \item $u_l$ is an $\cN$-linear endomorphism of $N_n$ with $\ker u_l=N_l$,
    \item the dimension of $N_l$ is equal to $(l+1)\dim_kZ$,
    \item $\im p_{l,j}=N_{j-l-1}$ for $1\leq l\leq j\leq n$ and
    \item $p_{m,i}\circ p_{l,j} = p_{m+l+1,j}$ for all $1\leq j\leq i\leq n$.
    \end{enumerate}
\end{prop}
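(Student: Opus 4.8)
The plan is to deduce everything from Proposition~\ref{filtration of induced} together with the explicit description of the maps. Throughout I write $f$ for the fixed generator of $I$, so that $f^{l+1}$ generates $I^{l+1}$, $q_l=1$, and $u_l=v_l\otimes\id_Z$ where $v_l\colon k\G\to k\G$ is left multiplication by $f^{l+1}$ in the $k[\G]^\cN$-module structure. For part~(a), I would simply cite the paragraph preceding the proposition: with a single generator, $N_n^{q_l}=N_n$, so $u_l$ is an $\cN$-linear endomorphism of $N_n$, and its kernel is $N_l$ by the proof of \cite[9.6]{voigt1977induzierte}; alternatively one reads this off directly from the description $N_l=\{n\in N_n\mid I^{l+1}.n=0\}$ in Proposition~\ref{filtration of induced}, since $I^{l+1}.n=0$ is equivalent to $f^{l+1}.n=0$ when $f$ generates $I$.

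For part~(b), I would argue by induction on $l$, using the second assertion of Proposition~\ref{filtration of induced} that $N_l/N_{l-1}\cong Z^{\dim_k H_l/H_{l-1}}$, so it suffices to show $\dim_k H_l/H_{l-1}=\dim_k Z/\dim_k Z=1$ — i.e. that each graded piece $H_l/H_{l-1}$ is one-dimensional. This is where the principality of $I$ is used: $H_l=(J^{l+1})^\perp$ with $J=Ik[\G]$, and since $k[\G]^\cN$ is local with principal augmentation ideal $I=(f)$, the powers $I^l/I^{l+1}$ are each at most one-dimensional, hence $J^l/J^{l+1}$ — which is $I^l/I^{l+1}$ tensored up along the free (by \cite[2.1]{waterhouse1979introduction}-type freeness of $k[\G]$ over $k[\G]^\cN$) extension — has dimension $\dim_k k[\cN]$ per step, and dualizing converts this into $\dim_k H_l/H_{l-1}=\dim_k k\cN$. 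Then $N_l/N_{l-1}\cong Z^{\dim_k H_l/H_{l-1}}$ has dimension $(\dim_k H_l/H_{l-1})\dim_k Z$, and since $N_n=k\G\otimes_{k\cN}Z$ has dimension $(\dim_k k\G/\dim_k k\cN)\dim_k Z=(n+1)\dim_k Z$ with the filtration having $n+1$ steps each contributing $\dim_k Z$, one gets $\dim_k N_l=(l+1)\dim_k Z$. (The cleanest route is to show directly that each step contributes exactly one copy of $Z$; I would verify that $\dim_k H_l/H_{l-1}=1$ by the principal-ideal argument and let (b) follow.)

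For parts~(c) and~(d), the key structural fact is that multiplication by $f$ in the $k[\G]^\cN$-action shifts the filtration down by one level. Concretely, $u_0=v_0\otimes\id_Z$ is multiplication by $f$; since $f\cdot N_n\subseteq N_{n-1}$ (as $f^{\,n}$ kills everything, $N_n$ being annihilated by $I^{n+1}$ and $f\cdot I^{n}$-torsion shifting appropriately), and more precisely $f^{l+1}$ maps $N_j$ onto $N_{j-l-1}$, I would prove $\im p_{l,j}=N_{j-l-1}$ by a dimension count combined with the inclusion $u_l(N_j)\subseteq N_{j-l-1}$: the inclusion holds because $I^{j-l}\cdot u_l(N_j)=u_l(I^{j-l}\cdot N_j)\subseteq u_l(0)=0$ using $I^{j-l}f^{l+1}\subseteq I^{j+1}$ and $N_j=\{n\mid I^{j+1}.n=0\}$; then $\dim_k \im p_{l,j}=\dim_k N_j-\dim_k\ker(u_l|_{N_j})=\dim_k N_j-\dim_k(N_l\cap N_j)=(j+1)\dim_k Z-(l+1)\dim_k Z=(j-l)\dim_k Z=\dim_k N_{j-l-1}$, forcing equality. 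Finally, (d) is immediate: $p_{m,i}\circ p_{l,j}$ restricted to $N_j$ is $(v_{m}v_{l})\otimes\id_Z$ where $v_m v_l$ is multiplication by $f^{m+1}f^{l+1}=f^{m+l+2}$, which is exactly $u_{m+l+1}$ restricted to $N_j$, i.e. $p_{m+l+1,j}$ — here I need $m+l+1\le j$ so that the codomain assignment is consistent, which follows from $j\le i$ and the domains matching up via (c). The main obstacle is part~(b): pinning down that principality of $I$ forces each graded piece of the filtration to be a single copy of $Z$, which requires care with the duality $H_l=(J^{l+1})^\perp$ and the module structure of $k[\G]$ over the local ring $k[\G]^\cN$; once that is in place, (c) and (d) are formal consequences of the shift behaviour of multiplication by $f$.
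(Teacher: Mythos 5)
Your parts (a), (c) and (d) are correct and essentially the paper's own argument: (a) is immediate from $q_l=1$; (c) follows from the inclusion $u_l(N_j)\subseteq N_{j-l-1}$ (which holds because $I^{j-l}f^{l+1}\subseteq I^{j+1}$ annihilates $N_j$) together with a dimension count resting on (b); and (d) is the computation $f^{m+1}.(f^{l+1}.h)=f^{m+l+2}.h$ after observing that $p_{m,i}$ and $p_{m,j}$ agree on $\im p_{l,j}\subseteq N_{j-l-1}\subseteq N_j$.

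The gap is in (b), which is the one part requiring an idea, and your argument there is internally inconsistent. You first reduce (b) to the claim $\dim_k H_l/H_{l-1}=1$, but two sentences later you compute (correctly) that $J^l/J^{l+1}$ has dimension $\dim_k k[\cN]$ and hence $\dim_k H_l/H_{l-1}=\dim_k k\cN$, which is not $1$ unless $\cN$ is trivial; the closing parenthetical then reasserts the false value $1$. Feeding $\dim_k H_l/H_{l-1}=\dim_k k\cN$ into the multiplicity formula you quote from Proposition \ref{filtration of induced} would give $\dim_k N_l/N_{l-1}=\dim_k k\cN\cdot\dim_k Z$, contradicting $\dim_k N_n=(n+1)\dim_k Z$; so the assertion ``each step contributes $\dim_k Z$'' is used as an input rather than proved. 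Two clean repairs exist. The paper's: for $n\in N_{l+1}$ one has $f.(f^{l+1}.n)=f^{l+2}.n=0$, so $u_l(N_{l+1})\subseteq N_0\cong Z$, whence $N_{l+1}/N_l\cong\im\bigl(u_l|_{N_{l+1}}\bigr)$ embeds into $Z$; being also a non-zero direct sum of copies of $Z$ by Proposition \ref{filtration of induced}, it must be exactly one copy. Alternatively, your final dimension count can be turned into a proof: each of the $n+1$ steps of the filtration is a non-zero direct sum of copies of $Z$ and hence has dimension at least $\dim_k Z$, while the total is $(n+1)\dim_k Z$, forcing equality at every step; but as written you do not carry out this pigeonhole argument, and the route you do take through $\dim_k H_l/H_{l-1}$ does not close.
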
    
\begin{proof}
	Since $I$ is a principal ideal the same holds for $I^{l+1}$.
	Therefore $u_l$ is an $\cN$-linear endomorphism of $N_n$, so that (a) holds.\\
	Due to \ref{filtration of induced}, the image of the restriction ${u_l}_{\vert N_{l+1}}$ must lie in $N_0\cong Z$.
	Hence the $\cN$-module $N_{l+1}/N_l$ is isomorphic to a submodule of $Z$. But by
	\ref{filtration of induced} it is also isomorphic to a non-zero direct sum of copies of $Z$. Therefore it must be
	isomorphic to $Z$, which yields (b).
	For $l\leq j\leq n$ another application of \ref{filtration of induced} yields 
	$N_j/N_l\cong\im p_{l,j}\subseteq N_{j-l-1}$, with equality due to dimension reasons.\\
	To show (d) that holds, we first note that $p_{m,i}\circ p_{l,j}=p_{m,j}\circ p_{l,j}$. Now consider the map $v_l:k\G\rightarrow k\G, h\mapsto f^{l+1}.h$,
	where $f$ is the generator of $I$. Then we obtain $v_m\circ v_l(h)=f^{m+l+2}.h=v_{m+l+1}(h)$ for all $h\in k\G$. This yields 
	$p_{m,j}\circ p_{l,j}=p_{m+l+1,j}$.
\end{proof}
Voigt also gave a generalized version of Clifford theory for the decomposition of an induced module (\cite[9.9]{voigt1977induzierte}):
\begin{rmk}\label{ses for filtration of induced}
	The stabilizer $\G_Z$ of $Z$ (see \cite[1.3]{voigt1977induzierte}) equals $\G$ if and only if for all $l\in\{0,\dots,n\}$
	the short exact sequence
	\[\ses{N_{l-1}}{N_l}{N_l/N_{l-1}}{}{}{}\]
	splits.
\end{rmk}
The modules of our interest are in a somewhat opposite situation. We are interested in conditions, when
none of these sequences split.\\
We say that for a $k$-Algebra $A$ an $A$-module $M$ is a brick, if $\End_A(M)\cong k$.
\begin{prop}\label{ses for filtration of unstable module}
	Assume that the following conditions hold:
	\begin{enumerate}[(i)]
	\item $I$ is a principal ideal,
	\item $\dim_k\Ext_{\cN}^1(Z,Z)=1$ and
	\item $k\G\otimes_{k\mathcal{N}}Z$ is a brick.
	\end{enumerate}
	Then for all $l\in\{1,\dots,n\}$
	the short exact sequence
	\[\ses{N_{l-1}}{N_l}{Z}{}{}{}\]
	does not split.
\end{prop}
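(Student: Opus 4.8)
The plan is to use Frobenius reciprocity to convert hypothesis~(iii), a statement about $k\G$-endomorphisms of $N_n=k\G\otimes_{k\cN}Z$, into a statement about $\cN$-homomorphisms out of $Z$. Since $k\G\otimes_{k\cN}-$ is left adjoint to the restriction functor $\res_{\cN}$, there is a natural isomorphism
\[\End_{k\G}(N_n)\;\cong\;\Hom_{k\cN}(Z,\res_{\cN}N_n),\]
under which $\id_{N_n}$ corresponds to the unit of the adjunction; via the identification $H_0\otimes_{k\cN}Z\cong Z$ this unit is the canonical embedding $\iota_0\colon Z\hookrightarrow N_n$ with image $N_0$. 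Hence hypothesis~(iii) says precisely that $\iota_0$ spans $\Hom_{k\cN}(Z,\res_{\cN}N_n)$, i.e.\ $\dim_k\Hom_{k\cN}(Z,\res_{\cN}N_n)=1$ (in particular $Z\neq0$, and every $\cN$-map $Z\to\res_{\cN}N_n$ is a scalar multiple of $\iota_0$). The argument will then consist in showing that a splitting of any of the sequences $0\to N_{l-1}\to N_l\to Z\to0$, $l\ge1$, produces an $\cN$-map $Z\to\res_{\cN}N_n$ that is \emph{not} a multiple of $\iota_0$.

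So suppose that for some $l\in\{1,\dots,n\}$ the sequence $0\to N_{l-1}\to N_l\to Z\to0$ splits as a sequence of $\cN$-modules, and let $C\subseteq N_l$ be the image of a section, so that $N_l=N_{l-1}\oplus C$. By~(i) and Proposition~\ref{filtration of induced with one gen}(b) we have $N_l/N_{l-1}\cong Z$, hence $C\cong Z\neq0$. Since $l\ge1$ we have $N_0\subseteq N_{l-1}$, so from $C\cap N_{l-1}=0$ we get $N_0\cap C=0$; thus $N_0\oplus C$ is an internal direct sum of $\cN$-submodules of $\res_{\cN}N_n$, and therefore
\[\dim_k\Hom_{k\cN}(Z,\res_{\cN}N_n)\;\ge\;\dim_k\Hom_{k\cN}(Z,N_0)+\dim_k\Hom_{k\cN}(Z,C)\;\ge\;2,\]
the last inequality because $N_0\cong Z\cong C$ makes both summands nonzero. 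By the isomorphism above this contradicts~(iii), proving the proposition.

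A more hands-on route, in which hypothesis~(i) is genuinely used, proceeds via the $\cN$-endomorphism $u_0$ of $N_n$ from Proposition~\ref{filtration of induced with one gen}: from the description $N_l=\{n\in N_n:I^{l+1}.n=0\}$ together with $\dim_kN_l=(l+1)\dim_kZ$ one checks that $u_0$ maps $N_l$ onto $N_{l-1}$ with kernel $N_0$, hence carries a complement of $N_{l-1}$ in $N_l$ to a complement of $N_{l-2}$ in $N_{l-1}$; this reduces the assertion to the case $l=1$, where a splitting means $N_1\cong Z\oplus Z$ and one concludes as above. Hypothesis~(ii), $\dim_k\Ext^1_{\cN}(Z,Z)=1$, is not needed for the non-splitting as such; rather, together with the non-splitting it forces every $N_l$ to be uniserial with all composition factors isomorphic to $Z$, which is the structural input behind the application to Auslander-Reiten components announced in the introduction. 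I anticipate no computational difficulty; the single real step is the conceptual one of testing the brick property via Frobenius reciprocity against $\cN$-maps out of $Z$, after which everything is a dimension count.
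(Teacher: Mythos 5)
Your argument is correct, and it takes a genuinely different and in fact shorter route than the paper's proof. Both arguments extract the same key consequence of (iii): by Frobenius reciprocity, $\dim_k\Hom_{\cN}(Z,k\G\otimes_{k\cN}Z)=\dim_k\End_{\G}(k\G\otimes_{k\cN}Z)=1$. The paper then proceeds by a downward induction along the filtration: the brick property rules out splitting at the top level $l=n$; the sequences at levels $l$ and $l-1$ are related by the pullback $p_{0,l-1}^*\colon\Ext^1_{\cN}(Z,N_{l-1})\to\Ext^1_{\cN}(Z,N_{l-2})$; and the long exact sequence for $\Hom_{\cN}(Z,-)$ applied to $0\to Z\to N_{l-1}\to N_{l-2}\to 0$, together with hypothesis (ii), shows that this pullback is injective, so that non-splitting propagates downward through the filtration. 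You instead test directly against $\Hom_{\cN}(Z,-)$: a splitting $N_l=N_{l-1}\oplus C$ at any level $l\ge 1$ places $N_0\oplus C\cong Z\oplus Z$ as an $\cN$-submodule of $N_n$ (since $N_0\subseteq N_{l-1}$ forces $N_0\cap C=0$), whence $\dim_k\Hom_{\cN}(Z,N_n)\ge 2$, an immediate contradiction. This dispenses with the $\Ext$ computation entirely and, as you correctly point out, shows that hypothesis (ii) is superfluous for this particular proposition; the paper does use (ii) in its proof, and it resurfaces in the application \ref{filtration of induced quasi simple modules}, where it is supplied by Auslander--Reiten theory. Your alternative ``hands-on'' route via $u_0$, which propagates a putative splitting down to level $1$ and concludes there by the same $Z\oplus Z$ argument, is also sound, though the first argument already settles the matter on its own.
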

\begin{proof}
		Since $N_n=k\G\otimes_{k\mathcal{N}}Z$ is a brick, it is indecomposable and the sequence
				\[\ses{N_{n-1}}{N_n}{Z}{}{p_{n-1,n}}\]
		cannot split.
		Hence there is a minimal $l\in\{1,\dots,n\}$ such that the short exact sequence
		\[\ses{N_{l-1}}{N_l}{Z}{}{p_{l-1,l}}\]
		does not split. Assume $l>1$. Then the diagram with exact rows \\
		\begin{center}
			\begin{tikzpicture}
			\matrix (m) [matrix of math nodes, row sep=3em, column sep=3em, text height=1.5ex, text depth=0.25ex]
			{ 0 & N_{l-1}  & N_{l}  & Z  & 0 \\
				0 & N_{l-2} & N_{l-1} & Z & 0 \\ };
			\path[-stealth]
			(m-1-1) edge (m-1-2)
			(m-1-2) edge (m-1-3)
			(m-1-3) edge node [above] {$p_{l-1,l}$} (m-1-4)
			(m-1-4) edge (m-1-5)
			(m-2-1) edge (m-2-2)
			(m-2-2) edge (m-2-3)
			(m-2-3) edge node [below] {$p_{l-2,l-1}$} (m-2-4)
			(m-2-4) edge (m-2-5)
			(m-1-2) edge node [right] {$p_{0,l-1}$} (m-2-2)
			(m-1-3) edge node [right] {$p_{0,l}$}(m-2-3)
			(m-1-4) edge node [right] {$id$}(m-2-4);
			
			\end{tikzpicture}
		\end{center}
		is commutative. If we identify the rows with elements in $\Ext_{\cN}^1(Z,N_{l-1})$
		and $\Ext_{\cN}^1(Z,N_{l-2})$, then the map $p_{0,{l-1}}^*:\Ext_{\cN}^1(Z,N_{l-1})\rightarrow\Ext_{\cN}^1(Z,N_{l-2})$
		sends the first row to the second row (\cite[7.2]{rotman2008introduction}).\\
		By assumption (iii) and Frobenius reciprocity we have 
		\[1\leq\dim_k\Hom_{\cN}(Z,Z)\leq\dim_k\Hom_\cN(Z,k\G\otimes_{k\cN}Z)=\dim_k\End_\G(k\G\otimes_{k\cN}Z)=1.\]
		As $\Hom_\cN(Z,-)$ is left exact, the spaces 
		$\Hom_{\cN}(Z,N_{l-1})$ and $\Hom_{\cN}(Z,N_{l-2})$ can be identified with subspaces of
		$\Hom_{\cN}(Z,k\G\otimes_{k\mathcal{N}}Z)$. As $l>1$ they are non-trivial and consequently also one-dimensional.
		By assumption (ii) we have $\dim_k\Ext_{\cN}^1(Z,Z)=1$.
		Therefore the short exact sequence
		\[\ses{Z}{N_{l-1}}{N_{l-2}}{}{p_{0,l-1}}\]
		induces the long exact sequence
		\[0\rightarrow\Hom_{\cN}(Z,Z)
		\overset{\sim}{\longrightarrow}\Hom_{\cN}(Z,N_{l-1})
		\overset{0}{\longrightarrow}\Hom_{\cN}(Z,N_{l-2})\]
		\[\overset{\sim}{\longrightarrow}\Ext_{\cN}^1(Z,Z)
		\overset{0}{\longrightarrow}\Ext_{\cN}^1(Z,N_{l-1})
		\overset{p_{0,{l-1}}^*}{\longrightarrow}\Ext_{\cN}^1(Z,N_{l-2}).\]
		Hence $p_{0,{l-1}}^*$ is injective and sends non-split exact sequences to non-split exact sequences (\cite[7.2]{rotman2008introduction}.
		Thus the short exact sequence
		\[\ses{N_{l-2}}{N_{l-1}}{Z}{}{p_{l-2,l-1}}\]
		does not split, a contradiction. Consequently $l=1$.
\end{proof}
Before we proof the next result concerning this filtration we need the following:
\begin{lem}\label{homogeneous tube has no projective middle terms}
	Let $\G$ be an infinitesimal group scheme of height $1$, $M$ a $\G$-module which belongs to a homogeneous tube and $\mathcal{E}:\ses{M}{E}{M}{}{}$ the almost split sequence ending in $M$.
	Then $E$ possesses no non-zero projective direct summand.
\end{lem}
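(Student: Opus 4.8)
The plan is to use the standard connection between almost split sequences and the structure of a stable AR-component, together with the hypothesis that $\G$ has height $1$, so that $k\G$ is a self-injective algebra with a well-behaved notion of $\Omega$ and $\tau$. Since $M$ belongs to a homogeneous tube $\Theta \cong \Z[A_\infty]/(\tau)$, the module $M$ is quasi-simple, so $M$ lies at the mouth of $\Theta$; there is exactly one arrow starting at $M$ in $\Theta$, hence in the almost split sequence $\mathcal{E}\colon \ses{M}{E}{M}{}{}$ the middle term $E$ has at most one indecomposable direct summand coming from $\Theta$ (namely the quasi-length-$2$ module), together with possibly some projective-injective summands. So the claim is exactly that the ``projective part'' of $E$ vanishes. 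I would argue this by a dimension/complexity count rather than by a direct examination of $\Theta$.

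First I would recall that for a self-injective algebra, if a projective-injective module $P$ occurs as a summand of the middle term of the almost split sequence ending in $M$, then $M\cong \rad P/\soc P$ and $P$ is uniquely determined; in particular $M$ has a simple top (or more precisely $M$ is a quotient of $\rad P$), which forces strong constraints. The key numerical input is the notion of complexity $\cx_\cg(M)$: a module $M$ that lies in a homogeneous tube is periodic, hence $\cx_\cg(M)=1$, whereas $\rad P/\soc P$ for $P$ projective has, for group algebras of infinitesimal group schemes of height $1$ with non-semisimple blocks, complexity equal to $\cx$ of the block, which (since $k\G$ here is not semisimple and $M$ is non-projective) is strictly larger than $1$ unless the relevant block is a Nakayama algebra. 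I would therefore split into cases according to whether the block $B$ of $k\G$ containing $M$ has finite or infinite representation type, or more directly, examine $\cx_\cg$ of the terms appearing.

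The cleanest route: if $E$ had a non-zero projective direct summand $P$, then from the exact sequence $\ses{M}{E}{M}{}{}$ and additivity of complexity under the exact sequence one would get a contradiction by comparing $\cx_\cg$ of $M$ (which is $1$) with that forced on the non-projective part of $E$; alternatively, use that $\tau M \cong M$ (homogeneous tube) together with $\tau = \nu\circ\Omega^2$, so $\Omega^2 M \cong \nu^{-1}M$, and feed this into the well-known formula relating the middle term of the almost split sequence to $\Omega$, concluding that a projective summand would make the almost split sequence ``too short'' — its middle term would be isomorphic to $\rad P/\soc P$ alone, which cannot be $\tau$-periodic since $\rad P/\soc P$ has complexity $>1$ for any non-simple block of an infinitesimal group scheme of height $1$ (here one invokes that the blocks of $kSL(2)_1$, and more generally the relevant restricted enveloping algebras, are not Nakayama when they are non-semisimple). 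I expect the main obstacle to be making this last complexity comparison uniform and clean: one must be careful that ``height $1$'' and the hypothesis ``$M$ lies in a homogeneous tube'' together genuinely preclude the exceptional situation of a Nakayama block, and to phrase the argument so that it does not secretly use the classification one is trying to prove. I would handle this by working block-by-block, reducing to a non-simple block $B$, noting $\cx_\cg$ is constant $=\dim V_\cg(B)\ge 2$ on the block's ``heart'' modules while periodic modules have complexity $1$, and deriving the contradiction from there.
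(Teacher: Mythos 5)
Your overall strategy coincides with the paper's: assume a projective indecomposable direct summand $P$ of $E$, invoke the standard form of the almost split sequence
$0\to\rad(P)\to\rad(P)/\soc(P)\oplus P\to P/\soc(P)\to 0$,
and play the complexity $1$ of the periodic module $M$ against the complexity of the modules attached to $P$. (Note one slip: the identification forced by this standard sequence is $M\cong P/\soc(P)$, not $M\cong\rad(P)/\soc(P)$; the latter is the non-projective part of the middle term.)

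The genuine gap sits exactly at the point you yourself flag as the main obstacle, and your proposed fix does not work. The assertion that $\cx_\cg$ is constant equal to $\dim V_\cg(B)\ge 2$ on a non-simple block $B$ is false in general: a non-semisimple block can perfectly well have complexity $1$, namely when it is a Nakayama algebra, and nothing in the hypotheses rules this out a priori, so invoking it is circular. The paper resolves the issue not by excluding the Nakayama case in advance but by running into it and then deriving the contradiction from representation type: from $M\cong P/\soc(P)$ and $\cx_B(M)=1$ one gets, since a projective resolution of $P/\soc(P)$ is obtained by shifting one of $\soc(P)$, that the simple module $\soc(P)$ has complexity $1$; by Farnsteiner's periodicity theorem this forces $B$ to be a Nakayama algebra, hence representation-finite, hence without infinite AR-components --- contradicting that $M$ lies in a homogeneous tube $\Z[A_\infty]/(\tau)$, which is infinite. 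You gesture at a case split by representation type, but you never supply this last implication, and without it (or some substitute) the complexity comparison alone does not close the argument.
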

\begin{proof}
	Let $B$ be the block of $M$ and assume that $E$ has a non-zero projective indecomposable direct summand $P$. By \cite[IV.3.11]{assem2006elements}
	the sequence $\mathcal{E}$ is equivalent to $\ses{\rad(P)}{\rad(P)/\soc(P)\oplus P}{P/\soc(P)}{}{}$.
	We obtain an isomorphism $M \cong P/\soc(P)$ and therefore $\cx_B(P/\soc(P))=\cx_B(M)=1$.
	Let $(P_i)_{i\geq0}$ be a projective resolution of $\soc(P)$ and set $Q_i:=P_{i+1}, Q_0=P$.
	Then $(Q_i)_{i\geq0}$ is a projective resolution of $P/\soc(P)$. Therefore the simple module $\soc(P)$ has complexity $1$.
	Now \cite[3.2(2)]{farnsteiner1995periodicity} yields that $B$ is a Nakayama algebra and therefore representation finite.
	Hence $M$ belongs to a finite component, a contradiction.
\end{proof}
The following gives us a tool for realizing modules belonging to homogeneous tubes. The assumptions
are  for example fulfilled for $SL(2)_1T_r$.
\begin{prop}\label{filtration of induced quasi simple modules}
	Assume that $\cN$ is an infinitesimal group scheme of height $1$ and that the following conditions hold:
	\begin{enumerate}[(a)]
	\item $I$ is a principal ideal,
	\item $k\G\otimes_{k\mathcal{N}}Z$ is a brick and
	\item  $Z$ is the quasi simple module of a homogeneous tube $\Theta$
			of the stable Auslander-Reiten quiver $\Gamma_s(\cN)$.
	\end{enumerate}
		Then $N_l$ is the indecomposable $\cN$-module of quasi-length $l+1$ in $\Theta$.
\end{prop}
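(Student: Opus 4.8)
The plan is to prove the statement by induction on $l$, using Proposition \ref{ses for filtration of unstable module} to pin down the non-split nature of each layer of Voigt's filtration, and then to identify $N_l$ with the correct module in the homogeneous tube $\Theta$ by tracking quasi-lengths. The base case $l=0$ is immediate since $N_0\cong Z$ is by hypothesis the quasi-simple module of $\Theta$. For the inductive step, I would first observe that hypotheses (a), (b) of the present proposition match hypotheses (i), (iii) of Proposition \ref{ses for filtration of unstable module}, while hypothesis (ii) there, namely $\dim_k\Ext^1_\cN(Z,Z)=1$, holds because $Z$ is quasi-simple in a homogeneous tube: for such a module the almost split sequence $\ses{Z}{E}{Z}{}{}$ has indecomposable middle term $E$ (this is where Lemma \ref{homogeneous tube has no projective middle terms} enters — it guarantees $E$ has no projective summand, hence $E$ is the genuine middle term of the AR-sequence and is indecomposable of quasi-length $2$), and an AR-sequence is always a non-split extension; one then argues that $\Ext^1_\cN(Z,Z)$ is exactly one-dimensional because $\cN$ is infinitesimal of height $1$ so $\Usl2$-type arguments (or more generally: $Z$ a brick in a tube over a self-injective algebra) force $\dim\Ext^1_\cN(Z,Z)\le 1$. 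Thus Proposition \ref{ses for filtration of unstable module} applies and none of the sequences $\ses{N_{l-1}}{N_l}{Z}{}{}$ splits.

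Next I would identify the $N_l$ recursively inside the tube. By Proposition \ref{filtration of induced with one gen}(b), $\dim_k N_l=(l+1)\dim_k Z$, so $N_l$ has the dimension expected of the quasi-length-$(l+1)$ module in $\Theta$ (recall that in a tube of tree class $A_\infty$, the module of quasi-length $m$ built on the quasi-simple $Z$ has dimension $m\cdot\dim_k Z$). To promote this dimension coincidence to an actual isomorphism, I would show by induction that $N_l$ is indecomposable with top-layer quotient $Z$ and socle-layer submodule equal to the unique length-$l$ module in $\Theta$: given $N_{l-1}$ is the indecomposable of quasi-length $l$ in $\Theta$, the non-split extension $\ses{N_{l-1}}{N_l}{Z}{}{}$ lies in $\Ext^1_\cN(Z,N_{l-1})$. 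Here the long exact sequence computation already carried out in the proof of Proposition \ref{ses for filtration of unstable module} shows $\dim_k\Ext^1_\cN(Z,N_{l-1})=1$ (it receives $\Ext^1_\cN(Z,Z)$ isomorphically from the connecting map and is hit by $p_{0,l-1}^*$ injectively from the one-dimensional $\Ext^1_\cN(Z,N_{l-2})$, forcing it to be one-dimensional). Since the unique non-zero class in a one-dimensional $\Ext$-group is represented, up to isomorphism of extensions, by a single middle term, and since the module of quasi-length $l+1$ in $\Theta$ does fit into a non-split extension of $Z$ by the quasi-length-$l$ module $N_{l-1}$, we conclude $N_l$ is that module. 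Its indecomposability also follows, since a decomposable middle term of a non-split extension with one-dimensional $\Ext$ would split off the class, contradicting non-splitness — or alternatively because a proper direct summand would have strictly smaller dimension than any module of quasi-length $l+1$ in the tube while still surjecting onto $Z$, which is impossible.

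The main obstacle I anticipate is the bookkeeping needed to guarantee that the $\Ext$-group $\Ext^1_\cN(Z,N_{l-1})$ is one-dimensional \emph{and} that its unique non-trivial extension has the module I claim as middle term; said differently, the subtle point is matching Voigt's filtration layer $N_l$ with the intrinsically-defined module in $\Theta$ rather than merely with \emph{some} indecomposable of the right dimension. This is handled by the uniqueness of non-split extensions in a one-dimensional $\Ext$-space together with the structure of tubes of type $\Z[A_\infty]/(\tau^n)$ with $n=1$: there, for each quasi-length $m$ there is exactly one indecomposable, and it is the unique non-split self-extension-tower over $Z$, so the induction closes. One should also double-check that Lemma \ref{homogeneous tube has no projective middle terms} is genuinely needed (it is: without it one cannot conclude $\Ext^1_\cN(Z,Z)$ is one-dimensional rather than larger, since a projective summand in the AR-middle-term would signal $Z$ lies in a finite component), and that $\cN$ being infinitesimal of height $1$ is used precisely to invoke that lemma and to ensure $k\cN$ is self-injective so that the AR-theory and Heller-shift machinery apply.
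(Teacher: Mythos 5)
Your proposal reaches the correct conclusion but takes a genuinely different route from the paper in the inductive step. The paper does not identify $N_l$ via one-dimensionality of $\Ext^1_\cN(Z,N_{l-1})$; instead it uses the maps $p_{i,l}$ and $\delta_l=\sum_{i=0}^{l-1}p_{i,l}$ to construct an explicit exact sequence $0\to N_l\to N_{l-1}\oplus N_{l+1}\to N_l\to 0$, computes $\dim_k\Hom_\cN(N_l,N_l)=l+1$ with the explicit basis $\{\iota_{l-i-1,l}\circ p_{i,l}\}$, checks that every non-isomorphism in $\End_\cN(N_l)$ factors through the left-hand map, and invokes \cite[V.2.2]{auslander1997representation} to conclude the sequence is almost split, whence $N_{l+1}$ is the quasi-length-$(l+2)$ successor of $N_l$ in $\Theta$. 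Your route --- realize $N_l$ as the middle term of the unique (up to scalar) non-split extension of $Z$ by $N_{l-1}$ --- is shorter and avoids the endomorphism-ring computation, but it requires knowing a priori that the quasi-length-$(l+1)$ module of $\Theta$ itself occurs as a non-split extension of $Z$ by the quasi-length-$l$ module. You assert this without justification. It is a standard fact for stable tubes whose mouth module is a brick (and $Z$ is a brick here, by Frobenius reciprocity from hypothesis (b)), but it needs a citation or its own short AR-theoretic induction --- and that induction is essentially the construction the paper carries out. Without it your argument only shows that $N_l$ is \emph{the} non-split extension of $Z$ by $N_{l-1}$, not that this extension lives in $\Theta$ at the expected quasi-length.

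Two smaller repairs. First, your description of the long exact sequence is off: the induced map $\Ext^1_\cN(Z,Z)\to\Ext^1_\cN(Z,N_{l-1})$ is zero (the connecting homomorphism $\Hom_\cN(Z,N_{l-2})\to\Ext^1_\cN(Z,Z)$ is already an isomorphism), so $\Ext^1_\cN(Z,N_{l-1})$ does not ``receive $\Ext^1_\cN(Z,Z)$ isomorphically''; the correct mechanism is that $p_{0,l-1}^*$ embeds $\Ext^1_\cN(Z,N_{l-1})$ into $\Ext^1_\cN(Z,N_{l-2})$, which is at most one-dimensional by induction starting from $\Ext^1_\cN(Z,N_0)=\Ext^1_\cN(Z,Z)$, while non-splitness of $0\to N_{l-1}\to N_l\to Z\to 0$ supplies the lower bound. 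Second, Lemma \ref{homogeneous tube has no projective middle terms} is not what makes $\Ext^1_\cN(Z,Z)$ one-dimensional --- that follows from \cite[V.2.4]{auslander1997representation} together with $Z$ being a brick; the lemma is needed to guarantee that the middle terms of the relevant almost split sequences carry no projective direct summand, so that $N_1$ (and, in the paper's argument, each $N_{l+1}$) is the indecomposable successor inside $\Theta$ rather than acquiring a projective summand.
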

\begin{proof}
Define for all  $l,j\in\{1,\dots,n\}$ with $j\geq l$ the maps
		$\delta_l:=\sum_{i=0}^{l-1}p_{i,l}:N_l\rightarrow N_{l-1}$
		and the injections $\iota_{l,j}:N_l\rightarrow N_j$. Then we get:
		\[p_{0,l+1}\circ\delta_l=\sum_{i=0}^{l-1}p_{0,l+1}\circ p_{i,l}
		\underset{\ref{filtration of induced with one gen}(d)}{=}\sum_{i=0}^{l-1}p_{i+1,l}=\sum_{i=1}^{l}p_{i,l}=\sum_{i=1}^{l-1}p_{i,l}
		=\delta_l-p_{0,l}.\]
		This gives us
		\[
		\left(\begin{smallmatrix}
		\iota_{l-1,l} \:,&
		p_{0,l+1}
		\end{smallmatrix}\right)\circ
		\left(\begin{smallmatrix}
		-\delta_l \\ \iota_{l,l+1}+\delta_l
		\end{smallmatrix}\right)
		=-\delta_l+p_{0,l}+p_{0,l+1}\circ\delta_l=0.
		\]
		Therefore we obtain a short exact sequence:
		
		\begin{center}
			\begin{tikzpicture}
			\matrix (m) [matrix of math nodes, row sep=3em, column sep=3em, text height=1.5ex, text depth=0.25ex]
			{ 
				0 & N_{l} & N_{l-1}\oplus N_{l+1} & N_l & 0. \\ };
			\path[-stealth]
			(m-1-1) edge (m-1-2)
			(m-1-2) edge node [above] {$	\left(\begin{smallmatrix}
				-\delta_l \\ \iota_{l,l+1}+\delta_l
				\end{smallmatrix}\right)$}(m-1-3)
			(m-1-3) edge node [above] {$	\left(\begin{smallmatrix}
				\iota_{l-1,l} \;,&
				p_{0,l+1}
				\end{smallmatrix}\right)$} (m-1-4)
			(m-1-4) edge (m-1-5);
			
			\end{tikzpicture}
		\end{center}
		Now we show by induction over $l$ that $N_l$ belongs to $\Theta$ and has quasi-length $l+1$.
	By \cite[V.2.4]{auslander1997representation}, every exact sequence
	\[\ses{Z}{X}{Z}{}{}\]
	is either split or almost split. Hence we have $\dim_k\Ext_{\cN}^1(Z,Z)=1$.
		Thanks to \ref{ses for filtration of unstable module}, the short exact sequence
		\[\ses{N_{j-1}}{N_j}{Z}{}{p_{j-1,j}}\]
		does not split for all $j\in\{1,\dots,n\}$. Especially the exact sequence
		\[\ses{Z}{N_1}{Z}{}{}\] does not split and therefore is almost split.
		As $Z$ is the quasi-simple module in $\Theta$ and since by \ref{homogeneous tube has no projective middle terms} the middle term of the
		above sequence has no non-zero projective direct summand, it follows that $N_1$ is the indecomposable $\cN$-module
		of quasi-length 2 in $\Theta$.\\
		Now let $l\geq 1$ and assume for all $j\leq l$ that $N_j$ is a module of 
		quasi-length $j+1$ in $\Theta$. As $N_l$ and $N_{l-1}$ are indecomposable $\cN$-module which are not isomorphic to each other the exact sequence

		\begin{center}
			\begin{tikzpicture}
			\matrix (m) [matrix of math nodes, row sep=3em, column sep=3em, text height=1.5ex, text depth=0.25ex]
			{ 
				0 & N_{l} & N_{l-1}\oplus N_{l+1} & N_l & 0 \\ };
			\path[-stealth]
			(m-1-1) edge (m-1-2)
			(m-1-2) edge node [above] {$	\left(\begin{smallmatrix}
				-\delta_l \\ \iota_{l,l+1}+\delta_l
				\end{smallmatrix}\right)$}(m-1-3)
			(m-1-3) edge node [above] {$	\left(\begin{smallmatrix}
				\iota_{l-1,l} \;,&
				p_{0,l+1}
				\end{smallmatrix}\right)$} (m-1-4)
			(m-1-4) edge (m-1-5);
			
			\end{tikzpicture}
		\end{center}
		cannot split. Applying standard properties of almost split sequences (\cite[V.1]{auslander1997representation})
		we obtain $\dim_k\Hom_{\cN}(N_l,N_l)=l+1$.
		As for all $-1\leq i\leq l-1$ the map $\iota_{l-i-1,l}\circ p_{i,l}$ with image $N_{l-i-1}$ belongs to 
		$\Hom_{\cN}(N_l,N_l)$, we get that these maps form a $k$-basis of $\Hom_{\cN}(N_l,N_l)$.
		The only isomorphism of these maps is $\iota_{l,l}\circ p_{-1,l}=id_{N_l}$.
		Hence if $\phi=\sum_{i=-1}^{l-1}\lambda_i\:\iota_{l-i-1,l}\circ p_{i,l}\in\Hom_{\cN}(N_l,N_l)$ is not an isomorphism,
		then $\lambda_{-1}=0$.
		Thus the image of $\phi$ must be a submodule of $N_{l-1}$. But then $\phi$ factors through
		$	\left(\begin{smallmatrix}
		\iota_{l-1,l} \\
		p_{0,l+1}
		\end{smallmatrix}\right)$ and by \cite[V.2.2]{auslander1997representation} the above exact sequence is almost split.
		Moreover, by \ref{homogeneous tube has no projective middle terms} the middle term of this sequence has no non-zero projective
		direct summand.
		Therefore $N_{l+1}$ must be a successor of $N_l$ in $\Theta$.
		As $\Theta$ is a homogeneous tube, the module $N_l$ of quasi-length $l+1$ has exactly two successors, 
		one of quasi-length $l$ and one of quasi-length $l+2$. Since $N_{l-1}$ has quasi-length $l$ it follows
		that $N_{l+1}$ must be the indecomposable $\cN$-module of quasi-length $l+2$ in $\Theta$.
\end{proof}
\section{Realizations of periodic $SL(2)_1T_r$-modules}\label{sec: realizing modules in tubes}
Let $k$ be an algebraically closed field of characteristic $p>2$, $T\subseteq SL(2)$ be the torus of diagonal matrices and $B\subseteq SL(2)$ the standard Borel
subgroup of upper triangular matrices. Let $C\subseteq SL(2,k)$ be a set of representatives for $SL(2,k)/B$ with $\{1,w_0\}\subseteq C$
and $g\in C\setminus\{1,w_0\}$. Set $\G:=SL(2)_1T_r$ for $r\geq 1$ and $\cN:=SL(2)_1$. 
For $0\leq a\leq p-2$ we consider the filtration by $\cN$-modules
\[Z(a)^{g}\cong N_0\subseteq N_2\subseteq\dots\subseteq N_{p^{r-1}-1}=k\G\otimes_{k\cN}Z(a)^{g}\]
of the induced module $k\G\otimes_{k\cN}Z(a)^{g}$.
\begin{prop}\label{new realization of sl(2) modules}
	For all $l\in\{0,\dots,p^{r-1}-1\}$, the $\cN$-module $N_l$ is isomorphic to $g.W((l+1)p+a)$.
\end{prop}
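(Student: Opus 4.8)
The strategy is to apply Proposition~\ref{filtration of induced quasi simple modules} to the specific situation $\G = SL(2)_1T_r$, $\cN = SL(2)_1$, $Z = Z(a)^g$, and then identify the resulting filtration modules by a dimension count together with Lemma~1.0.1 (the classification of AR-components containing the $g.W(d)$). So the proof naturally splits into (1) verifying the three hypotheses (a), (b), (c) of Proposition~\ref{filtration of induced quasi simple modules}, and (2) matching the abstractly-produced modules $N_l$ with the concrete modules $g.W((l+1)p+a)$.

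First I would verify the hypotheses. Hypothesis (c) is essentially Lemma~1.0.1: for $d = sp+a$ with $s=1$, the module $g.W(p+a)$ lies in a homogeneous tube whose quasi-simple module is $Z(a)^g$, so $Z(a)^g$ is indeed the quasi-simple module of a homogeneous tube of $\Gamma_s(SL(2)_1)$. For hypothesis (a), I need that the augmentation ideal $I$ of $k[\G]^\cN \cong k[\G/\cN]$ is principal; since $\G/\cN = SL(2)_1T_r / SL(2)_1 \cong (T_r)$ — the $r$-th Frobenius kernel of the one-dimensional torus — its coordinate ring is $k[X]/(X^{p^r})$ with augmentation ideal generated by one element, so $I$ is principal and $n = p^{r-1}-1$ (matching the length of the filtration written just above the statement). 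Hypothesis (b), that $k\G \otimes_{k\cN} Z(a)^g$ is a brick, is the one real input: by Frobenius reciprocity $\End_\G(k\G\otimes_{k\cN}Z(a)^g) \cong \Hom_\cN(Z(a)^g, k\G\otimes_{k\cN}Z(a)^g)$, and since $Z(a)^g$ is quasi-simple in a homogeneous tube one controls $\Hom_\cN(Z(a)^g, N_l)$ as in the proof of Proposition~\ref{ses for filtration of unstable module}; alternatively one invokes the structure of $kSL(2)_1T_r$-blocks recalled in Section~1 together with the fact that $k\G\otimes_{k\cN}Z(a)^g$ restricts to an indecomposable $SL(2)_1$-module (it should restrict to an indecomposable of quasi-length $p^{r-1}$ in the homogeneous tube), forcing its endomorphism ring to be local and, combined with the dimension bound, equal to $k$.

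With the hypotheses in place, Proposition~\ref{filtration of induced quasi simple modules} tells me that $N_l$ is \emph{the} indecomposable $\cN$-module of quasi-length $l+1$ in the homogeneous tube $\Theta$ through $Z(a)^g$. Now I identify it: by Proposition~\ref{filtration of induced with one gen}(b), $\dim_k N_l = (l+1)\dim_k Z(a)^g = (l+1)p$. On the other hand, Lemma~1.0.1 says $g.W((l+1)p + a)$ has $\dim_k = (l+1)p$ and lies in the homogeneous tube with quasi-simple module $Z(a)^g$ — i.e.\ in $\Theta$ — with quasi-length $l+1$. Since in a homogeneous tube there is exactly one indecomposable module of each quasi-length, $N_l \cong g.W((l+1)p+a)$, as claimed. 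The range $l \in \{0,\dots,p^{r-1}-1\}$ corresponds exactly to $n = p^{r-1}-1$, and the endpoint $N_{p^{r-1}-1} = k\G\otimes_{k\cN}Z(a)^g \cong g.W(p^r+a)$, consistent with Lemma~\ref{X(i,g,l)} (this is the $SL(2)_1$-restriction of $X(a,g,1)$).

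\textbf{Main obstacle.} The genuinely non-formal point is hypothesis (b): showing $k\G\otimes_{k\cN}Z(a)^g$ is a brick. Everything else is bookkeeping (principality of $I$, the dimension arithmetic, reading off Lemma~1.0.1). I expect the brick property to follow from the known module-theoretic structure of the non-simple blocks of $kSL(2)_1T_r$ recalled at the start of Section~1 — in particular that the induced module is the indecomposable of quasi-length $p^{r-1}$ in a homogeneous tube of $\Gamma_s(SL(2)_1)$, whence its $SL(2)_1$-endomorphism ring is local of dimension bounded by $\dim_k\Hom_\cN(Z(a)^g, k\G\otimes_{k\cN}Z(a)^g)$, which one argues is $1$ exactly as in Proposition~\ref{ses for filtration of unstable module}. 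One must be a little careful that $g \notin B\cup w_0B$ is used here (it guarantees we land in a homogeneous, not exceptional, tube), which is why the hypothesis $g \in C\setminus\{1,w_0\}$ is imposed.
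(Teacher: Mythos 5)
Your proposal is correct and follows the paper's proof essentially step for step: check hypotheses (a)--(c) of Proposition~\ref{filtration of induced quasi simple modules} using the facts recalled in Section~1, apply it, and identify $N_l$ with $g.W((l+1)p+a)$ via the quasi-length statement of Lemma~1.1 (the dimension count you add is harmless but not needed, since a homogeneous tube has a unique module of each quasi-length). The one point you leave as a sketch --- that $k\G\otimes_{k\cN}Z(a)^g$ is a brick with restriction $g.W(p^r+a)$ --- is exactly what the paper imports by citing \cite[4.2.3, 4.3]{farnsteiner2009group}, and your only other slip is cosmetic: $\G/\cN\cong\mu_{(p^{r-1})}$ with coordinate ring $k[T]/(T^{p^{r-1}}-1)$, not $T_r$ with $k[X]/(X^{p^r})$, though your conclusion $n=p^{r-1}-1$ is the right one.
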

\begin{proof}
The augmentation ideal of $k[\G/\cN]\cong k[\mu_{(p^{r-1})}]=k[T]/(T^{p^{r-1}}-1)$ is a principal ideal.
By \cite[4.2.3]{farnsteiner2009group} and \cite[4.3]{farnsteiner2009group} the induced $\G$-module $k\G\otimes_{k\cN}Z(a)^{g}$ is
a brick whose restriction to $\cN$ is isomorphic to $g.W(p^r+a)$.
By \cite[4.1.2]{farnsteiner2009group}, the module $Z(a)^{g}$ is quasi-simple and belongs to a homogeneous
tube $\Theta$ of the stable Auslander-Reiten quiver $\Gamma_s(\cN)$. Additionally, \cite[4.1.2]{farnsteiner2009group} yields that
$g.W(lp+a)$ is the $\cN$-module of quasi-length $l$ in $\Theta$.
The assertion now follows by applying \ref{filtration of induced quasi simple modules}.
\end{proof}

\begin{rmk}
	The above result can also be applied if $g\in\{1,w_0\}$. One only
	has to use another torus $\hat{T}$ such that the induction of $Z(a)$ respectively $Z(a)^{w_0}$ to
	$SL(2)_1\hat{T}_r$ is indecomposable.
\end{rmk}

We are now able to use the filtration of induced modules 
to realize the $SL(2)_1T_r$-modules which belong to homogeneous tubes.
Moreover, for a subgroup scheme $H$ of $N_{SL(2)}(T)\cap B^g$
we are able to extend these modules to $SL(2)_1T_rH$,
which later will be of interest for the classification of modules for domestic group schemes.\\
Denote by $B^g$ the subgroup of $SL(2)$ which is obtained by conjugating all elements of $B$ by $g$.
Then $Z(a)^{g}\cong g.W(p+a)$  is stable under the action of $B^g$, so that $Z(a)^{g}$ is an $SL(2)_1B^g$-module.
\begin{thm}\label{extending x(i,g,l)}
Let $g\in C\setminus\{1,w_0\}$, $0\leq a\leq p-2$ and $H$ be a subgroup scheme of $N_{SL(2)}(T)\cap B^g$.
For $n\geq 1$ let $\cH_{(n)}:=SL(2)_1T_nH$ and $\cN:=SL(2)_1$.
Let $r,s\geq 1$, $Y:=k{\cH_{(r)}}\otimes_{k{\cH_{(1)}}}Z(a)^{g}$ and denote the filtration by $\cH_{(r)}$-modules of the induced module $N:=k\cH_{(r+s)}\otimes_{k\cH_{(r)}}Y$ by
\[k\cH_{(r)}\otimes_{k{\cH_{(1)}}}Z(a)^{g}\cong N_{0}\subseteq N_{1}\subseteq\dots\subseteq N_{p^{s}-1}=N.\]
Then $\res_\cN^{\cH_{(r)}} N_{l-1}\cong g.W(lp^r+a)$ for all $1\leq l\leq p^s$.
\end{thm}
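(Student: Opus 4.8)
\emph{Proof proposal.} The plan is to reduce the statement to Proposition~\ref{new realization of sl(2) modules}, using the intrinsic description of the Voigt filtration in Proposition~\ref{filtration of induced} together with transitivity of induction, a Mackey decomposition and a short computation in the coordinate ring of the torus. As preliminaries inside $SL(2)$: since $g\notin B\cup w_0B$, no infinitesimal conjugate of $T_1$ lies in $B^g$, so $N_{SL(2)}(T)\cap B^g$ has trivial identity component and $H$ is finite \'etale; as $SL(2)_1T_m$ is connected, $H\cap SL(2)_1T_m$ is trivial for every $m$, and $H$ normalises $SL(2)_1T_m$ because it normalises $SL(2)_1$ and $T$. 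Hence $SL(2)_1T_{r+s}$ is a normal subgroup scheme of $\cH_{(r+s)}$ with $SL(2)_1T_{r+s}\cdot\cH_{(r)}=\cH_{(r+s)}$, $SL(2)_1T_{r+s}\cap\cH_{(r)}=SL(2)_1T_r$ and $SL(2)_1T_{r+s}\cap\cH_{(1)}=\cN$, so that $\cH_{(r+s)}/\cH_{(1)}\cong SL(2)_1T_{r+s}/\cN$ and $\cH_{(r+s)}/\cH_{(r)}\cong SL(2)_1T_{r+s}/SL(2)_1T_r$. All the quotients occurring below are infinitesimal with principal augmentation ideal, so Propositions~\ref{filtration of induced} and~\ref{filtration of induced with one gen} apply throughout.

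Next I would pass to the coarser datum of the filtration $(M_j)_j$ of $N$ by $\cH_{(1)}$-modules. By transitivity, $N\cong k\cH_{(r+s)}\otimes_{k\cH_{(1)}}Z(a)^{g}$, and because the action $(x.h)(y)=h(yx)$ on $k\cH_{(r+s)}$ does not depend on the chosen subgroup scheme, the $k[\cH_{(r+s)}]^{\cH_{(r)}}$-module structure defining $(N_l)$ is the restriction of the $k[\cH_{(r+s)}]^{\cH_{(1)}}$-module structure defining $(M_j)$ along the inclusion $k[\cH_{(r+s)}]^{\cH_{(r)}}\hookrightarrow k[\cH_{(r+s)}]^{\cH_{(1)}}$. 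Computing in $T\cong\mathbb{G}_m$ through the $p$- and $p^{r}$-power maps, one has $k[\cH_{(r+s)}]^{\cH_{(1)}}\cong k[T_{r+s}/T_1]\cong k[u]/(u^{p^{r+s-1}})$ with augmentation ideal $(u)$, while $k[\cH_{(r+s)}]^{\cH_{(r)}}\cong k[T_{r+s}/T_r]$ embeds as $k[u^{p^{r-1}}]$, with augmentation ideal $(u^{p^{r-1}})$ (using $(1+u)^{p^{r-1}}=1+u^{p^{r-1}}$ in characteristic $p$). Since $(u^{p^{r-1}})^{l+1}=(u^{(l+1)p^{r-1}})$, Proposition~\ref{filtration of induced} gives
\[N_l=M_{(l+1)p^{r-1}-1}\qquad(0\le l\le p^{s}-1),\]
so it remains to identify $\res_{\cN}M_j$ for $j=(l+1)p^{r-1}-1$.

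For this I would restrict to $SL(2)_1T_{r+s}$. As $SL(2)_1T_{r+s}$ is normal in $\cH_{(r+s)}$ with $SL(2)_1T_{r+s}\cdot\cH_{(1)}=\cH_{(r+s)}$, the Mackey decomposition for a single double coset together with transitivity gives an isomorphism of $SL(2)_1T_{r+s}$-modules
\[\res_{SL(2)_1T_{r+s}}^{\cH_{(r+s)}}N\;\cong\;k(SL(2)_1T_{r+s})\otimes_{k\cN}Z(a)^{g}=:\tilde N,\]
which is precisely the module treated in Proposition~\ref{new realization of sl(2) modules} with $r$ replaced by $r+s$. This isomorphism is compatible with the $k[\cH_{(r+s)}]^{\cH_{(1)}}$-structure on $N$ and the $k[SL(2)_1T_{r+s}]^{\cN}$-structure on $\tilde N$ (under $\cH_{(r+s)}/\cH_{(1)}\cong SL(2)_1T_{r+s}/\cN$), so it carries the $\cH_{(1)}$-submodule $M_j$ --- which is $\cN$-stable because $\cH_{(1)}\cap SL(2)_1T_{r+s}=\cN$ --- onto the $j$-th term $\tilde N_j$ of the Voigt filtration of $\tilde N$ by $\cN$-modules. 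By Proposition~\ref{new realization of sl(2) modules}, $\tilde N_j\cong g.W((j+1)p+a)$ for all $j\in\{0,\dots,p^{r+s-1}-1\}$, and $(l+1)p^{r-1}-1$ lies in this range since $l\le p^{s}-1$. Hence for $0\le l\le p^{s}-1$,
\[\res_{\cN}^{\cH_{(r)}}N_l=\res_{\cN}M_{(l+1)p^{r-1}-1}\cong g.W\bigl((l+1)p^{r-1}\cdot p+a\bigr)=g.W\bigl((l+1)p^{r}+a\bigr),\]
and replacing $l$ by $l-1$ yields the asserted isomorphism.

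The step I expect to be most delicate is the verification, in the two reductions above, that the transitivity-of-induction and Mackey isomorphisms intertwine the $k[\G]^{\cN}$-type module structures controlling the Voigt filtration --- concretely, that enlarging the normal subgroup scheme coarsens the filtration exactly as in the equality $N_l=M_{(l+1)p^{r-1}-1}$, and that restriction to $SL(2)_1T_{r+s}$ matches the $\cH_{(1)}$-filtration of $N$ with the $\cN$-filtration of $\tilde N$ term by term. The remaining ingredients --- the bookkeeping with normal subgroup schemes inside $SL(2)$ and the identification of the augmentation ideals --- are routine.
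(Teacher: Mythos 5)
Your proposal is correct and follows essentially the same route as the paper: both pass to the finer Voigt filtration $(M_j)_j$ of $N$ by $\cH_{(1)}$-modules via the index shift $N_{l-1}=M_{lp^{r-1}-1}$ (comparing the augmentation ideals of $k[\cH_{(r+s)}]^{\cH_{(r)}}\subseteq k[\cH_{(r+s)}]^{\cH_{(1)}}$), identify $N$ over $SL(2)_1T_{r+s}$ with $k(SL(2)_1T_{r+s})\otimes_{k\cN}Z(a)^{g}$ compatibly with the filtration, and then invoke Proposition~\ref{new realization of sl(2) modules} with $r+s$ in place of $r$. The differences are only presentational: you spell out the Mackey-type restriction isomorphism and the computation in $k[u]/(u^{p^{r+s-1}})$ that the paper asserts directly.
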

\begin{proof}
Set $\G:=SL(2)_1T_{r+s}$.
As $\cH_{(r+s)}/\cH_{(1)}\cong\mu_{(p^{r+s-1})}\cong \G/\cN$ we obtain $k[\cH_{(r+s)}]^{\cH_{(1)}}\cong k[\mu_{(p^{r+s-1})}]\cong k[\G]^\cN$.
Denote the filtration by $\cH_{(1)}$-modules of the induced module $N\cong k\cH_{(r+s)}\otimes_{k{\cH_{(1)}}}Z(a)^{g}$ by
\[Z(a)^{g}\cong M_{0}\subseteq M_{1}\subseteq\dots\subseteq M_{p^{r+s-1}-1}=N.\]
The $\cH_{(r+s)}$-module $N$ is over $\G$ isomorphic to $k\G\otimes_{k\cN}Z(a)^{g}$. These modules are also isomorphic over 
$k[\mu_{(p^{r+s-1})}]$ with respect to the action defined in section \ref{sec: filtrations of induced modules}.
Applying \ref{filtration of induced} yields that the restriction of the modules $M_i$ to $\cN$ is the
filtration by $\cN$-modules of the induced module $k\G\otimes_{k\cN}Z(a)^{g}$.
Let $J$ be the kernel of the canonical projection $k[\cH_{(r+s)}]\rightarrow k[\cH_{(1)}]$. Then the ideal $J^{p^{r-1}}$ is the kernel of the canonical projection
$k[\cH_{(r+s)}]\rightarrow k[\cH_{(r)}]$. By \ref{filtration of induced}, we get the equality 
$M_{p^{r-1}l-1}=\res_{\cH_{(1)}}^{\cH_{(r)}}N_{l-1}$ for all $1\leq l\leq p^s$. \\
By \ref{X(i,g,l)}, there is for any $l\geq 1$ a unique $SL(2)_1T_r$-module $X(i,g,l)$ which is isomorphic to $g.W(lp^r+a)$
 over $\cN$. Thanks to \ref{new realization of sl(2) modules}, this module is isomorphic to $\res_\cN^{\cH_{(1)}} M_{p^{r-1}l-1}=\res_\cN^{\cH_{(r)}}N_{l-1}$.
\end{proof}
\section{Actions on rank varieties and their stabilizers}\label{sec:action on rank varieties}
Let $k$ be a field of characteristic $p>0$, $\mathcal{G}$ a finite group scheme and $\cg:=\Lie(\G)$ its Lie algebra.
The nullcone $V_\cg$ is a cone, so that we can consider the projective variety $\P(V_\cg)$. 
There is an action of the group-like elements of $k\G$ on its primitive elements and therefore we obtain an action of $\G(k)$ on $\cg$. Moreover, this action induces an action of $\G(k)$ on $\P(V_\cg)$. \\
 For a variety $X$ 
and a point $x\in X$ the tangent space
$T_{x,X}$ is defined as the dual space $(\mathfrak{m}_{x,X}/\mathfrak{m}_{x,X}^2)^*$ where $\mathfrak{m}_{x,X}$ denotes the
maximal ideal of the local ring $\mathcal{O}_{x,X}$ of $X$ at $x$. A point $x\in X$ is called simple, if $\mathcal{O}_{x,X}$ is
a regular local ring.
The following result can be found in \cite[Lemma 4]{popov2013jordan} for $\Char k=0$, 
but the proof can easily be modified such that
it applies to finite groups whose order is relatively prime to the characteristic of the field.
\begin{lem}\cite[Lemma 4]{popov2013jordan}\label{tangent action faithful}
Let $X$ be an irreducible variety and $G$ be a finite group with $p\nmid\vert G\vert$ which acts faithfully on $X$. 
Let $x\in X$ be a fixed point of $G$. Then the induced action of $G$ on $T_{x,X}$ is faithful.
\end{lem}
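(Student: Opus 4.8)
The plan is to reduce the statement to a linearization argument at the fixed point, exploiting that $G$ is finite of order prime to $p$ so that representations of $G$ are completely reducible and Maschke-type averaging is available. First I would replace $X$ by an affine $G$-stable neighbourhood of $x$: since $x$ is fixed by the finite group $G$, a standard argument (intersecting the finitely many translates of an affine neighbourhood) produces an affine $G$-stable open $U\ni x$, and since $T_{x,U}=T_{x,X}$ and the action on the tangent space only depends on the local ring, it suffices to prove faithfulness of the $G$-action on $T_{x,U}$. So assume $X=\spec R$ is affine with $G$ acting on $R$, and let $\mathfrak{m}\subseteq R$ be the maximal ideal of $x$, which is $G$-stable because $x$ is fixed. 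The action on $T_{x,X}$ is then (dual to) the induced action on $\mathfrak{m}/\mathfrak{m}^2$.

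Next I would suppose, for contradiction, that some $g\in G$, $g\neq e$, acts trivially on $\mathfrak{m}/\mathfrak{m}^2$, and aim to show $g$ acts trivially on all of $R$, contradicting faithfulness on $X$. The engine here is the filtration of $R_{\mathfrak{m}}$ (or of $R$, after passing to the completion $\hat R=\varprojlim R/\mathfrak{m}^n$) by powers of $\mathfrak{m}$: $g$ acts on each quotient $\mathfrak{m}^n/\mathfrak{m}^{n+1}$, and since $\mathfrak{m}^n/\mathfrak{m}^{n+1}$ is a quotient of $\mathrm{Sym}^n(\mathfrak{m}/\mathfrak{m}^2)$, triviality on $\mathfrak{m}/\mathfrak{m}^2$ forces triviality on every graded piece, hence $g$ acts unipotently on each $R/\mathfrak{m}^n$. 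Because $\vert G\vert$ is invertible in $k$, the order of $g$ is invertible, so a unipotent automorphism of finite order prime to the characteristic is the identity; thus $g$ acts trivially on every $R/\mathfrak{m}^n$ and therefore on $\hat R$. Finally, because $R_{\mathfrak{m}}$ is Noetherian, the Krull intersection theorem gives $\bigcap_n \mathfrak{m}^n R_{\mathfrak{m}}=0$, so $R_{\mathfrak{m}}\hookrightarrow \hat R$ and $g$ acts trivially on $R_{\mathfrak{m}}$; since $X$ is irreducible, $R$ embeds in $R_{\mathfrak{m}}$ (as $\mathfrak{m}$ contains no nonzero ideal supported away from $x$ — more precisely the localization map $R\to R_{\mathfrak{m}}$ is injective because $R$ is a domain), so $g=\id$ on $R$, the desired contradiction.

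I should note where smoothness is \emph{not} needed: the argument above never uses regularity of $\mathcal{O}_{x,X}$, only that the local ring is Noetherian and that $X$ is irreducible (to get injectivity of $R\to R_{\mathfrak{m}}$); this matches the remark in \cite{popov2013jordan} that the hypothesis can be dropped. The one place care is required is the passage from "trivial on $\mathrm{gr}_{\mathfrak{m}}R$" to "trivial on $R/\mathfrak{m}^n$": this is exactly the statement that an automorphism of a finite-length module filtered so that it acts trivially on all subquotients is unipotent, combined with the fact that in characteristic $p$ a finite-order unipotent element has $p$-power order — which is harmless here precisely because $p\nmid\vert G\vert$ forces that $p$-power order to be $1$.

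The main obstacle I anticipate is purely bookkeeping rather than conceptual: constructing the $G$-stable affine neighbourhood and justifying the completion/Krull-intersection step cleanly in the generality of an arbitrary (possibly non-normal, non-separated-issues-aside) irreducible variety over $k$. If one wants to avoid the affine-neighbourhood construction entirely, an alternative is to work directly with the complete local ring $\hat{\mathcal{O}}_{x,X}$, on which $G$ acts by continuous $k$-algebra automorphisms (functoriality of completion), run the unipotence argument there, and then descend to $\mathcal{O}_{x,X}$ via the faithful flatness of completion together with the injectivity of $\mathcal{O}_{x,X}\to\hat{\mathcal{O}}_{x,X}$; faithfulness of the $G$-action on $X$ then follows because $\mathcal{O}_{x,X}$ has the same fraction field as the function field of $X$. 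Either route is short once the scheme-theoretic preliminaries are in place, and neither requires modifying Popov's idea — only checking that the characteristic-zero steps survive when "finite of order prime to $p$" replaces "finite".
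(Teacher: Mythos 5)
Your argument is correct and is exactly the adaptation the paper has in mind: the paper gives no proof of its own, citing Popov's Lemma 4 and remarking that the characteristic-zero proof carries over when $p\nmid\vert G\vert$, and your linearization at the fixed point (triviality on $\mathfrak{m}/\mathfrak{m}^2$ propagating through $\mathrm{Sym}^n$ to the graded pieces, unipotence on $R/\mathfrak{m}^n$, and the fact that a finite-order unipotent element of order prime to $p$ is the identity, followed by Krull intersection) is precisely that adaptation. You also correctly isolate the one place the hypothesis $p\nmid\vert G\vert$ is needed, so nothing further is required.
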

\begin{rmk}
	The result can also be generalized to finite linearly reductive group schemes acting on $X$.
	Consequently there are also generalizations of the following results to this situation.
\end{rmk}
\begin{lem}\label{stabilizer is subgroup of GL_r}
Let $\G$ be a finite group scheme with Lie algebra $\cg:=\Lie(\G)$ such that the variety $\P(V_{\cg})$ is irreducible.
Assume that the order of $G:=\G(k)$ is relatively prime to $p$ and that $G$ acts faithfully on $\P(V_{\cg})$.
Moreover, let $r:=\dim \P(V_{\cg})$ and $x\in \P(V_{\cg})$ be a simple point. Then there is an injective homomorphism $G_x\rightarrow GL_r(k)$.
\end{lem}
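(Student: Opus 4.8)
The plan is to realize the homomorphism $G_x\to GL_r(k)$ as the representation of the stabilizer on the tangent space at the fixed point, and then to deduce injectivity from Lemma \ref{tangent action faithful}. Write $X:=\P(V_\cg)$, which by hypothesis is irreducible of dimension $r$.

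First I would record the elementary facts about $G_x$. As a subgroup of $G$, its order divides $\vert G\vert$ and is therefore prime to $p$, and the restriction to $G_x$ of the faithful action of $G$ on $X$ is again faithful. By the definition of the stabilizer, $x$ is a fixed point of $G_x$, so $G_x$ acts on the local ring $\mathcal{O}_{x,X}$ preserving its maximal ideal $\mathfrak{m}_{x,X}$; hence $G_x$ acts $k$-linearly on $\mathfrak{m}_{x,X}/\mathfrak{m}_{x,X}^2$ and, dually, on the tangent space $T_{x,X}=(\mathfrak{m}_{x,X}/\mathfrak{m}_{x,X}^2)^*$. This produces a group homomorphism $\rho\colon G_x\to GL(T_{x,X})$.

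Next I would apply Lemma \ref{tangent action faithful} to the finite group $G_x$ (of order prime to $p$) acting faithfully on the irreducible variety $X$ with fixed point $x$: the induced action on $T_{x,X}$ is faithful, that is, $\rho$ is injective. It then remains only to identify the target with $GL_r(k)$. Since $x$ is a simple point, $\mathcal{O}_{x,X}$ is a regular local ring, so $\dim_k\mathfrak{m}_{x,X}/\mathfrak{m}_{x,X}^2=\dim\mathcal{O}_{x,X}$; and since $X$ is irreducible of dimension $r$, $\dim\mathcal{O}_{x,X}=r$. Hence $\dim_k T_{x,X}=r$, and choosing a basis gives $GL(T_{x,X})\cong GL_r(k)$, so $\rho$ is the desired injective homomorphism.

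The argument is essentially formal once Lemma \ref{tangent action faithful} is available, so I do not expect a genuine obstacle; the only point requiring a little care is the dimension count, where one uses simplicity of $x$ to pass from $\dim_k T_{x,X}$ to $\dim\mathcal{O}_{x,X}$ and irreducibility of $X$ to pass from $\dim\mathcal{O}_{x,X}$ to $r$.
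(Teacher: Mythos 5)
Your proof is correct and follows essentially the same route as the paper: apply Lemma \ref{tangent action faithful} to the fixed point $x$ of $G_x$ to get a faithful action on $T_{x,\P(V_\cg)}$, then use simplicity of $x$ (and irreducibility of $\P(V_\cg)$) to identify $\dim_k T_{x,\P(V_\cg)}$ with $r$. Your write-up merely spells out a few routine verifications (faithfulness of the restricted action, the order of $G_x$ being prime to $p$) that the paper leaves implicit.
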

\begin{proof}
Since $x$ is a fixed point of $G_x$ and $x$ is a simple point, the action of $G_x$ on $T_{x,\P(V_{\cg})}$ is faithful,
by Lemma \ref{tangent action faithful}.
As the point $x$ is simple, we have $r=\dim_kT_{x,\P(V_{\cg})}$. So, there is an injective homomorphism
$G_x\rightarrow GL(T_{x,\P(V_{\cg})})\cong GL_r(k)$.
\end{proof}

Any finite group scheme decomposes into a semi-direct product $\G^0\rtimes\G_{red}$ with an infinitesimal normal subgroup scheme $\G^0$
and a reduced group scheme $\G_{red}$. The group algebra $k\G$ is isomorphic to the skew group algebra $(k\G^0)*G$ where $G=\G(k)$.
We denote by $G_M:=\{g\in G\:\vert\:M^g\cong M\}$ the stabilizer (or inertia group) of a $\G^0$-module $M$. 
The rank variety of the twisted module $M^g$ can be computed as $\P(V_{\cg}(M^g))=g.\P(V_{\cg}(M))$.
If $\P(V_{\cg}(M))=\{x\}$, then it is easy to see that $G_M\subseteq G_x$.
\begin{rmk}\label{stabilizers equal}
Let $\G\subseteq SL(2)$ with $\G^0\cong SL(2)_1$ and $M$ be a $\G^0$-module which belongs to a homogeneous tube $\Theta$.  Then there are $g\in SL(2,k)$ and $d\in\N$ with $M\cong g.W(d)$ and $\P(V_{\sl2})(g.W(d))=\{g.[e]\}$.
Let $h\in G_{g.[e]}$. Then $g^{-1}hg.[e]=[e]$ and hence $g^{-1}hg$ is an element of the standard Borel subgroup of upper triangular matrices $B$. From this follows that $hg.W(d)=g.W(d)$ and thus
$h\in G_{g.W(d)}$. Therefore we obtain
\[G_{g.W(d)}=G_{g.[e]}.\]
\end{rmk}
\begin{cor}\label{inertia group cyclic for tubes}
	Let $\mathcal{G}$ be a finite group scheme with Lie algebra $\cg=\Lie(\G)$ such that $\P(V_{\cg})$ is smooth and irreducible.
	Assume that $\G/\G_1$ is linearly reductive and that $G=\G(k)$ acts faithfully on $\P(V_{\cg})$.
	Let $r:=\dim\P(V_{\cg})$ and $M$ be an indecomposable $\mathcal{G}^0$-module of complexity $1$.
	Then there is an injective homomorphism $G_M\rightarrow GL_r(k)$.
	If additionally $r=1$ (for example when $k\mathcal{G}^0$ is tame), then $G_M$ is a cyclic group.
\end{cor}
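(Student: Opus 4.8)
The plan is to reduce everything to Lemma \ref{stabilizer is subgroup of GL_r} by showing that $\P(V_{\cg}(M))$ is a single simple point which is fixed by $G_M$, and that $G=\G(k)$ has order prime to $p$. First I would unwind the numerical hypothesis: since $\cx_\cg(M)=1$, the dimension--complexity identification recalled in Section 1 gives $\dim V_{\cg}(M)=1$, so $\P(V_{\cg}(M))$ is a projective variety of dimension $0$, i.e.\ a finite set of points. Because $M$ is indecomposable, the connectedness theorem for rank varieties of indecomposable modules (Carlson's connectedness theorem, in the form valid for $U_0(\cg)$) shows that $\P(V_{\cg}(M))$ is connected; a connected zero-dimensional variety is a single reduced point, say $\{x\}$ with $x\in\P(V_{\cg})$. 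As $\P(V_{\cg})$ is smooth, $x$ is a simple point of $\P(V_{\cg})$.

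Next I would check the hypotheses of Lemma \ref{stabilizer is subgroup of GL_r}. Writing $\G=\G^0\rtimes\G_{red}$, the group $\G^0$ is infinitesimal, so $\G^0(k)$ is trivial and $G=\G_{red}(k)=\pi_0(\G)(k)$. Since $\G_1$ is infinitesimal and connected, the component group of $\G/\G_1$ is again $\pi_0(\G)=\G_{red}$; as $\G/\G_1$ is linearly reductive, so is its étale quotient $\G_{red}$, and a linearly reductive finite étale group scheme in characteristic $p$ has order prime to $p$. Hence $p\nmid\vert G\vert$. Together with the assumed faithfulness of the $G$-action on $\P(V_{\cg})$, Lemma \ref{stabilizer is subgroup of GL_r} applies and produces an injective homomorphism $G_x\to GL_r(k)$. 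Now for $g\in G_M$ we have $M^g\cong M$, hence $g.\{x\}=\P(V_{\cg}(M^g))=\P(V_{\cg}(M))=\{x\}$, so $g\in G_x$; thus $G_M\subseteq G_x$ and restricting the embedding yields the desired injective homomorphism $G_M\to GL_r(k)$.

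Finally, if $r=1$ then $GL_1(k)=k^\times$, so $G_M$ embeds as a finite subgroup of the multiplicative group of the field $k$ and is therefore cyclic; and when $k\G^0$ is tame one has $\dim V_{\cg}=\dim V_{\Lie\G^0}=2$ by the classification of tame finite group schemes, so indeed $r=1$. The only genuinely non-formal input here is the connectedness of $\P(V_{\cg}(M))$ for indecomposable $M$; everything else is bookkeeping to match the hypotheses of Lemma \ref{stabilizer is subgroup of GL_r}, and the most delicate bookkeeping step is deducing $p\nmid\vert\G(k)\vert$ from the linear reductivity of $\G/\G_1$, which I expect to be the main point requiring care.
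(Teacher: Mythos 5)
Your overall strategy is the same as the paper's: show that $\P(V_{\cg}(M))$ is a single (automatically simple, by smoothness) point $x$, deduce $G_M\subseteq G_x$, and invoke Lemma \ref{stabilizer is subgroup of GL_r}; the $r=1$ case is then the standard fact that finite subgroups of $k^\times$ are cyclic. Your explicit derivation of $p\nmid\vert G\vert$ from the linear reductivity of $\G/\G_1$ is a detail the paper leaves implicit, and it is correct.

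There is, however, one genuine gap: you apply the complexity--dimension identity and Carlson's connectedness theorem to $M$ as if it were an indecomposable $U_0(\cg)$-module, but the hypothesis only gives that $M$ is indecomposable of complexity $1$ as a $\G^0$-module, and $\G^0$ may have height greater than $1$, so $k\G^0\neq U_0(\cg)=k\G_1$ in general. The rank variety $V_{\cg}(M)$ is by definition computed from the restriction $\res_{\G_1}^{\G^0}M$, so to conclude that $\P(V_{\cg}(M))$ is zero-dimensional and connected you must know that this restriction is still indecomposable of complexity $1$. This is exactly where the paper spends the first half of its proof: since $\G^0/\G_1$ is linearly reductive (multiplicative), the extension $k\G^0:k\G_1$ is separable, whence $M$ is a summand of $\ind_{\G_1}^{\G^0}N$ for a summand $N$ of the restriction; the restriction is then indecomposable by \cite[2.1.2]{farnsteiner2009group}, and a chain of inequalities for complexities under induction and restriction forces $\cx_{k\G_1}(\res_{\G_1}^{\G^0}M)=1$. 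Without this reduction your appeal to the connectedness theorem does not apply (a decomposable restriction could a priori have a disconnected rank variety), so you should insert this step; once it is in place the rest of your argument goes through verbatim.
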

\begin{proof}
	By By \cite[3.15]{doi1989hopf}, 
	the extension $k\G^0:k\G_1$ is separable, so that
	there is an indecomposable direct summand $N$ of $\res_{\G_1}^{\G^0} M$ such that $M$ is a direct
	summand of $\ind^{\G^0}_{\G_1} N$. Thanks to \cite[2.1.2]{farnsteiner2009group} the module $\res_{\G_1}^{\G^0} M$
	is indecomposable and therefore $N=\res_{\G_1}^{\G^0} M$.
	General properties of the complexity (\cite[II.2]{shanghai}) yield
	\[1=\cx_{k\G^0}(M)\leq\cx_{k\G^0}(\ind_{\G_1}^{\G^0} N)\leq\cx_{k\G_1}(N)\leq\cx_{k\G_1}(M)\leq\cx_{k\G^0}(M)=1.\]
	Hence $N$ has also complexity $1$ and the variety $\P(V_{\sl2}(M))$ has dimension $0$.
	By \cite[2.2]{friedlander1986support}, the indecomposability of $M$ yields that the variety $\P(V_{\sl2}(M))$ consists of only one point $x$. Thus $G_M\subseteq G_x$.
	The first assertion now follows from \ref{stabilizer is subgroup of GL_r}. 
	If $r=\dim\P(V_{\cg})=1$,  then $G_M$ is
	isomorphic to a finite subgroup of $k^\times$ and therefore cyclic.
\end{proof}
\begin{example}
	Let $\cH$ be a finite reduced linearly reductive subgroup of $GL_n$. Then $\cH$ acts naturally 
	on the $n$-fold product $\mathbb{G}_{a(1)}^n$ of the first Frobenius kernel of the additive group
	$\mathbb{G}_a$. Hence we can form the semi-direct product
	$\G:=\mathbb{G}_{a(1)}^n\rtimes\cH$. Then $\G^0=\mathbb{G}_{a(1)}^n$, $\G_{red}=\cH$
	and $\cg:=\Lie(\G^0)$ is an $n$-dimensional abelian restricted Lie algebra with trivial $p$-map.
	The nullcone can be computed as $\P(V_{\cg})\cong\P(\cg)\cong\P^{n-1}$
	and $G:=\G(k)=\cH(k)$ acts faithfully on this variety. Hence we can apply \ref{inertia group cyclic for tubes}, so that
	the stabilizer $G_M$ of every periodic $\G^0$-module $M$ is isomorphic to a finite subgroup of $GL_{n-1}(k)$.
\end{example}
\section{Decomposition of induced modules}\label{sec:decomp induced module}
Let $H$ be a Hopf algebra and $A$ be an $H$-comodule $k$-algebra (i.e. $A$ is a $k$-algebra and an $H$-comodule such that the comodule map $\rho_A:A\rightarrow A\otimes H$
is an algebra homomorphism). Denote by $B:=A^{coH}$ the coinvariants of $H$. Then $B\subseteq A$ is called $H$-extension.
An $H$-extension is called $H$-Galois if the map $\beta:A\otimes_B A\rightarrow A\otimes_k H$, $a\otimes b \mapsto a\rho_A(b)$ is bijective
(\cite[\S 1]{schneider1990representation}).
If $\cN$ is a normal subgroup scheme of a finite group scheme $\G$, then the extension $k\cN\subseteq k\G$ is $k(\G/\cN)$-Galois.
Let $B\subseteq A $ be an $H$-Galois extension and $M$ be an $A$-module. Then $\End_B(M)$ is an $H$-module algebra via
\[(h.f)(m)=\sum_{i=1}^{n}a_if(b_im)\]
for $h\in H$, $f\in\End_B(M)$, $m\in M$ and $\sum_{i=1}^{n}a_i\otimes b_i=\beta^{-1}(1\otimes \eta(h))\in A\otimes_B A$, where $\eta$
is the antipode of $H$.
The analogue of \cite[2.3]{van1995h} for left modules shows that the endomorphism algebra $\End_A(A\otimes_B M)$ of the induced module $A\otimes_B M$
is isomorphic to the smash product $\End_B(M)\# H$. \\
We want to use this result to describe the decomposition of certain induced modules over group schemes.
The modules we want to consider satisfy a stronger stability condition then those considered in \cite{voigt1977induzierte},
\cite{schneider1990representation} and \cite{van1995h}. \\
Let $\cM$ be a multiplicative group scheme, i.e. the coordinate ring of $\cM$ is isomorphic
to the group algebra $kX(\cM)$ of its character group. 
Since $\cM$ is multiplicative, we obtain for any $\cM$-module $M$ a weight space decomposition
$M=\bigoplus_{\lambda\in X(\cM)}M_\lambda$ with
\[M_\lambda:=\{m\in M\:\vert\:hm=\lambda(h)m\;\text{for all}\;h\in k\cM\}.\]
Let $\G$ be a finite group scheme and
$\cN\subseteq\G^0$ a normal subgroup scheme of $\G$ such that $\G^0/\cN$ is linearly reductive.
By Nagata's Theorem \cite[IV,\S 3,3.6]{demazure1970groupes}, an infinitesimal group scheme is linearly reductive if
and only if it is multiplicative. \\
Let $M$ be a $\G^0$-module. For any $\lambda\in X(\G^0/\cN)$ we obtain a $\G^0$-module
$M\otimes_kk_\lambda$, the tensor product of $M$ with the one-dimensional $\G^0/\cN$-module defined
by $\lambda$. This defines an action of $X(\G^0/\cN)$ (here we identify $X(\G^0/\cN)$ with a subgroup of 
$X(\G^0)$ via the canonical inclusion $X(\G^0/\cN)\hookrightarrow X(\G^0)$) on the isomorphism classes of $\G^0$-modules
and we define the stabilizer of a $\G^0$-module as
$X(\G^0/\cN)_M:=\{\lambda\in X(\G^0/\cN)\:\vert\:M\otimes_kk_\lambda\cong M\}$. A $\G^0$-module
$M$ is called $\G^0/\cN$-regular if its stabilizer is trivial.

\begin{prop}\label{decomp of induced stable modules}
Let $\G$ be a finite group scheme over an algebraically closed field $k$ and $\cN$ be an infinitesimal normal subgroup scheme such that $\G^0/\cN$ is multiplicative. 
Let $k(\G/\cN)=\bigoplus_{i=1}^nP_i$ be the decomposition into projective indecomposable $\G/\cN$-modules. 
Let $M$ be a $\G$-module such that $\res_{\G^0}^\G M$ is indecomposable and $M$ is $\G^0/\cN$-regular.
Then $k\G\otimes_{k\cN}M$ is isomorphic
to the direct sum $\bigoplus_{i=1}^nM\otimes_k P_i$ of indecomposable $\G$-modules and 
$M\otimes_k P_i\cong M\otimes_k P_j$ as $\G$-modules if and only if $P_i\cong P_j$ as $\G/\cN$-modules.
\end{prop}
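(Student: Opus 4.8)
The plan is to combine the endomorphism-algebra computation recalled just before the statement with the representation theory of the (linearly reductive) quotient $\G^0/\cN$, and then lift everything from $\G^0$ to $\G$ using the $\G^0/\cN$-regularity hypothesis. First I would work over $\G^0$: since $\cN$ is normal in $\G^0$ and $\G^0/\cN$ is multiplicative, hence linearly reductive, the extension $k\cN\subseteq k\G^0$ is $k(\G^0/\cN)$-Galois, and by the left-module analogue of \cite[2.3]{van1995h} we get $\End_{\G^0}(k\G^0\otimes_{k\cN}M)\cong\End_{\cN}(M)\#k(\G^0/\cN)$. Because $\res_{\G^0}^\G M$ is indecomposable over $\G^0$ and we may pass to $\cN$ with control via the results already cited (the restriction stays well-behaved), I would argue $\End_{\cN}(\res_\cN M)$ is local; in fact the cleaner route is to observe that $k\G^0\otimes_{k\cN}M\cong M\otimes_k k(\G^0/\cN)$ as $\G^0$-modules, via the standard "Mackey/tensor identity" for the Galois extension, where $\G^0$ acts diagonally. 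Since $k(\G^0/\cN)$ is the regular module of the multiplicative group scheme $\G^0/\cN$, it decomposes as $\bigoplus_{\lambda\in X(\G^0/\cN)}k_\lambda$, so $k\G^0\otimes_{k\cN}M\cong\bigoplus_{\lambda}M\otimes_k k_\lambda$. The $\G^0/\cN$-regularity of $M$ says precisely that the summands $M\otimes_k k_\lambda$ are pairwise non-isomorphic, and each is indecomposable over $\G^0$ since $M$ is.

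Next I would lift to $\G$. Here the key point is that $\G^0$ is normal in $\G$ with $\G/\G^0$ reduced (and $\cN\subseteq\G^0$ normal in $\G$), so $\G^0/\cN$ is normal in $\G/\cN$ and $k(\G/\cN)$ is a strongly $\G/\cN{\,/\,}(\G^0/\cN)$-graded algebra over $k(\G^0/\cN)$; equivalently $k\G^0\subseteq k\G$ is again a Galois extension for the reduced quotient group $\G(k)/\G^0(k)=:G$, i.e.\ $k\G\cong (k\G^0)*G$ as in Section \ref{sec:action on rank varieties}. I would then apply the endomorphism-algebra formula at the level of $\G$ directly: $\End_{\G}(k\G\otimes_{k\cN}M)\cong\End_{\cN}(M)\#k(\G/\cN)$. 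Feeding in $\End_{\cN}(M)\cong k$ (which follows once we know $k\G\otimes_{k\cN}M$, or rather $M$ over $\cN$ restricted, has trivial endomorphisms — this is where I would invoke $\G^0/\cN$-regularity together with the indecomposability of $\res_{\G^0}^\G M$ to force $\End_\cN(\res_\cN M)=k$ after noting that $k\G^0\otimes_{k\cN}M$ is multiplicity-free), we get $\End_\G(k\G\otimes_{k\cN}M)\cong k\#k(\G/\cN)\cong k(\G/\cN)$. Therefore the indecomposable direct summands of $k\G\otimes_{k\cN}M$ as a $\G$-module correspond bijectively, with correct multiplicities, to the block/primitive-idempotent decomposition of $k(\G/\cN)$, i.e.\ to the projective indecomposables $P_1,\dots,P_n$; and two summands are isomorphic iff the corresponding primitive idempotents of $k(\G/\cN)$ are conjugate, i.e.\ iff $P_i\cong P_j$.

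To identify the summands explicitly as $M\otimes_k P_i$, I would use that $k\G\otimes_{k\cN}M\cong M\otimes_k k(\G/\cN)$ as $\G$-modules (diagonal action), the same tensor identity as before now for the extension $k\cN\subseteq k\G$; applying $M\otimes_k -$ to $k(\G/\cN)=\bigoplus_i P_i$ and checking that $M\otimes_k P_i$ is indecomposable precisely because its endomorphism ring is $\End_\cN(M)\otimes\End_{\G/\cN}(P_i)\cong\End_{\G/\cN}(P_i)$, which is local, gives the stated decomposition; and $M\otimes_k P_i\cong M\otimes_k P_j$ as $\G$-modules reduces, after restricting to $\G^0$ and using regularity to cancel the $M$ factor in the sense of Krull–Schmidt on the multiplicity-free module, to $P_i\cong P_j$ as $\G/\cN$-modules. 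The main obstacle I anticipate is the bookkeeping in the lift from $\G^0$ to $\G$: one must be careful that "$\G^0/\cN$-regular" (stabilizer in $X(\G^0/\cN)$ trivial) is exactly the hypothesis that makes $\End_\cN(M)=k$ and hence makes the smash product $\End_\cN(M)\#k(\G/\cN)$ collapse to $k(\G/\cN)$, rather than to a larger crossed product; establishing $\End_\cN(M)=k$ cleanly, via the multiplicity-freeness of $k\G^0\otimes_{k\cN}M$ forced by regularity, is the crux of the argument, and everything else is a formal consequence of the Galois/smash-product machinery together with Krull–Schmidt.
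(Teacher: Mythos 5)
Your overall strategy is the same as the paper's (the $H$-Galois/smash-product description of $\End_\G(k\G\otimes_{k\cN}M)$ applied twice, the tensor identity $k\G\otimes_{k\cN}M\cong M\otimes_k k(\G/\cN)$, and the reduction to the decomposition of $k(\G/\cN)$), but the step you yourself flag as the crux is wrong: $\G^0/\cN$-regularity together with indecomposability of $\res^{\G}_{\G^0}M$ does \emph{not} imply $\End_\cN(M)=k$. Regularity says the summands $M\otimes_k k_\lambda$ of $k\G^0\otimes_{k\cN}M$ are pairwise non-isomorphic, and multiplicity-freeness of that module only controls the \emph{off-diagonal} Hom-spaces; it says nothing about $\End_{\G^0}(M)$ itself, which can be a local ring of dimension $>1$. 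Indeed, in the very application this proposition is used for (Theorem \ref{modules in tubes}), the module $M=Y(i,g,l)$ with $l\geq 2$ restricts over $\cN=\G_1$ to $g.W(lp^r+i)$, a module of quasi-length $l$ in a homogeneous tube with $\dim_k\End_\cN(M)=l>1$; these modules are nevertheless regular and restrict indecomposably. Consequently your identity $\End_\G(k\G\otimes_{k\cN}M)\cong k\#k(\G/\cN)\cong k(\G/\cN)$ fails, and with it your argument that $\End_\G(M\otimes_kP_i)\cong\End_\cN(M)\otimes\End_{\G/\cN}(P_i)$ is local, as well as the ``cancel the $M$ factor'' step for distinguishing $M\otimes_kP_i$ from $M\otimes_kP_j$ (restriction to $\G^0$ does not in general separate $P_i$ from $P_j$).

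What the correct argument needs instead, and what the paper does, is weaker: one shows that the nonzero weight spaces $E'_\lambda$ of $E'=\End_\cN(M)$ (for the $k(\G^0/\cN)$-action) lie in $\rad(E')$ \emph{because of regularity}, that $E'_0=\End_{\G^0}(M)$ is local by indecomposability of $\res_\cN M$, and hence that $(\rad(E')\#k(\G^0/\cN))\#kG$ is a nilpotent ideal of $E=\End_\G(k\G\otimes_{k\cN}M)$ with quotient $k(\G/\cN)$. The decomposition of $k\G\otimes_{k\cN}M$ is then obtained not from an isomorphism $E\cong k(\G/\cN)$ but by \emph{lifting the primitive idempotents} of $k(\G/\cN)\cong E/(\text{nilpotent ideal})$ to $E^{op}$ and transporting them through the tensor identity $\psi$; the statement about when $M\otimes_kP_i\cong M\otimes_kP_j$ then comes from the standard correspondence between indecomposable summands and primitive idempotents modulo a nilpotent ideal. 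So your outline can be repaired, but the repair is exactly the missing idempotent-lifting step, not a sharpening of the endomorphism computation.
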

\begin{proof}
Let $G=\G(k)$. Then $k\G^0\subseteq k\G$ is a $kG$-Galois extension and $k\cN\subseteq k\G^0$
is a $k(\G^0/\cN)$-Galois extension.
We set $E:=\End_\G(k\G\otimes_{k\cN}M)$ and $E':=\End_\cN(M)$.
As $M$ is a $\G$-module, we can apply the above twice to get isomorphisms 
\[E\cong \End_{\G^0}(k\G^0\otimes_{k\cN}M)\#kG\cong(E'\# k(\G^0/\cN))\#kG.\]
Denote by $\pi:k\G^0\rightarrow k(\G^0/\cN)$ the canonical projection. Then the comodule map $\rho:k\G^0\rightarrow k\G^0\otimes k(\G^0/\cN)$
is given by 
$\rho(a)=\sum_{(a)}a_{(1)}\otimes\pi(a_{(2)})$. This yields 
\[\beta(\sum_{(a)}\eta(a_{(1)})\otimes a_{(2)})=1\otimes\pi(a).\]
Therefore the $k(\G^0/\cN)$-action on $E'$ is given by 
\[(h.f)(m)=\sum_{(a)}a_{(1)}f(\eta(a_{(2)})m)\] for $h\in k(\G^0/\cN)$, $f\in E'$,
$m\in M$ and $a\in k\G^0$ with $\pi(a)=h$. Hence we obtain $(E')^{\G^0/\cN}=\End_{\G^0}(M)$. \\
Since $\G^0/\cN$ is multiplicative, the space $E'$ affords a decomposition into weight spaces  $E'=\bigoplus_{\lambda\in X(\G^0/\cN)}E'_\lambda$
with $E'_0=(E')^{\G^0/\cN}=\End_{\G^0}(M)$. As $M$ is regular we have: (c.f. for example the proof of \cite[3.1.4]{farnsteiner2009group})
\[\rad(E')=\rad(\End_{\G^0}(M))\oplus\bigoplus_{\substack{\lambda\in X(\G^0/\cN)\\\lambda\neq 0}}E'_\lambda.\]
Hence the Jacobson radical $\rad(E')$ is stable under the action of $\G^0/\cN$. By \cite[3.2(2)]{zhang1998baer}, we get
\[\rad(E')\#k(\G^0/\cN)\subseteq\rad(E'\#k(\G^0/\cN)).\]
Moreover, by \cite[4.3]{cohen1984group}, we have \[\rad(E'\#k(\G^0/\cN))\#kG\subseteq \rad((E'\#k(\G^0/\cN))\#kG).\]
Therefore, $(\rad(E')\#k(\G^0/\cN))\#kG$ is nilpotent.
By \cite[2.1.2]{farnsteiner2009group}, the indecomposability of $\res_{\G^0}^\G M$ yields that $\res_\cN^\G M$ is
indecomposable and therefore $E'$ is local. As $k$ is algebraically closed, we obtain isomorphisms
\begin{align*}
&(E'\#k(\G^0/\cN))\#kG)/((\rad(E')\#k(\G^0/\cN))\#kG)\\
\cong& (k\#k(\G^0/\cN))\#kG\cong k(\G^0/\cN)\#kG\\
\cong& k(\G/\cN).
\end{align*}
Now let $p:E\rightarrow k(\G/\cN)$ be the surjection given by the above isomorphisms. 
Then the map $\psi:k\G\otimes_{k\cN}M\otimes_{E^{op}}E^{op}\rightarrow M\otimes_k k(\G/\cN)$ with
$\psi(a\otimes m\otimes \phi)=\sum_{(a)}a_{(1)}m\otimes \pi(a_{(2)})p(\phi)$ is an isomorphism of $\G$-modules:\\
The $\G$-linearity follows directly from the definition of the operation on $M\otimes_k k(\G/\cN)$. Let $m\in M$
and $y\in k\G$. Then
\begin{align*}
&\psi(\sum_{(y)}y_{(1)}\otimes \eta(y_{(2)})m\otimes 1)
=\sum_{(y)}y_{(1)}\eta(y_{(2)})m\otimes \pi(y_{(3)})\\
=&\sum_{(y)}\epsilon(y_{(1)})m\otimes \pi(y_{(2)})
=m\otimes\pi(y),
\end{align*}
so that $\psi$ is surjective and therefore bijective for dimension reasons.\\
Hence we have a decomposition  $k\G\otimes_{k\cN}M\cong\bigoplus_{i=1}^nM\otimes_k P_i$ of $\G$-modules.
Now let $e_1,\dots,e_n\in k(\G/\cN)$ be primitive idempotents with $P_i=k(\G/\cN)e_i$ and set $Q_i:=e_ik(\G/\cN)$.
Then $k(\G/\cN)^{op}=\bigoplus_{i=1}^nQ_i$ is a decomposition into indecomposable left ideals and by 
\cite[4.5.11]{karpilovsky1987algebraic} this
decomposition lifts to a decomposition $E^{op}=\bigoplus_{i=1}^nI_i$ of indecomposable left ideals of $E^{op}$ such that $p(I_i)=Q_i$.
Let $f_1,\dots,f_n\in E^{op}$ be primitive idempotents such that $p(f_i)=e_i$.
Then \[\psi(k\G\otimes_{k\cN}M\otimes_{E^{op}}I_i)=M\otimes_k k(\G/\cN)p(f_i)=M\otimes_k k(\G/\cN)e_i=M\otimes_k P_i.\]
By \cite[4.5.12]{karpilovsky1987algebraic} the $\G$-module $k\G\otimes_{k\cN}M\otimes_{E^{op}}I_i$ is indecomposable 
so that $M\otimes_k P_i$ is also indecomposable.
Moreover, we have $M\otimes_k P_i\cong M\otimes_k P_j$ as a $\G$-module if and only if
$k\G\otimes_{k\cN}M\otimes_{E^{op}}I_i$ is isomorphic to $k\G\otimes_{k\cN}M\otimes_{E^{op}}I_j$ as a $\G$-module and by
\cite[4.5.12]{karpilovsky1987algebraic}these are isomorphic if and only if $I_i\cong I_j$ as left ideals of $E^{op}$.
By \cite[4.5.11]{karpilovsky1987algebraic} we have $I_i\cong I_j$ as left ideals of $E^{op}$ if and only if $Q_i\cong Q_j$ as
left ideals of $k(\G/\cN)^{op}$ and these are isomorphic if and only if $P_i\cong P_j$ as $\G/\cN$-modules.
%
%
\end{proof}

\begin{example}
	There is a somewhat weaker result for non-regular $M$. Let $\G$ be infinitesimal and $M\cong M\otimes_kk_\lambda$ for all $\lambda\in X(\G/\cN)$.
	By \cite[2.1.5]{farnsteiner2009group}, the ring $\End_\cN(M)$ is isomorphic to a crossed product
	$\End_\G(M)\#_\sigma kX(\G/\cN)$. Since $M$ is a $\G$-module, we get as above an isomorphism
	$\End_\G(k\G\otimes_{k\cN}M)\cong\End_\cN(M)\# k(\G/\cN)$. As $(k(\G/\cN))^*\cong kX(\G/\cN)$,
	there is, due to \cite[9.4.17]{susan1993hopf}, an isomorphism
	$E:=\End_\G(k\G\otimes_{k\cN}M)\cong\End_\G(M)\otimes M_n(k)$.
	Hence $\rad(E)\cong\rad(\End_\G(M))\otimes_kM_n(k)$ and  $E/\rad(E)\cong M_n(k)$. Consequently \[k\G\otimes_{k\cN}M\cong M^n\cong\bigoplus_{\lambda\in X(\G/\cN)}M\otimes_kk_\lambda.\] 
	
\end{example}
\section{Auslander-Reiten quiver of group graded algebras}\label{sec:AR-quiver group graded}
Let $G$ be a finite group and $A=\bigoplus_{g\in G}A_g$ a self-injective strongly $G$-graded finite-dimensional $k$-algebra,
i.e. we have $A_gA_h=A_{gh}$ for all $g,h\in G$. By \cite[Corollary 2.10]{nastasescu1983strongly}, the self-injectivity
of $A$ is equivalent to the self-injectivity of $A_1$.
For a subgroup $H\leq G$ we denote by $A_H$ the strongly $H$-graded subalgebra $\bigoplus_{g\in H}A_g$
of $A$. 
For subgroups $H\leq U\leq G$ we abbreviate the induction and restriction functors by
$\ind_H^U:=\ind_{A_H}^{A_U}$ and $\res_H^U:=\res_{A_H}^{A_U}$. For $H=\{1\}$ we will
write $\ind_1^U$ and $\res_1^U$. \\
A morphism $\sigma:(\Gamma,\tau_\Gamma)\rightarrow(\Lambda,\tau_\Lambda)$ of stable translation quivers is a morphism of quivers which commutes with the
translation $\sigma\circ\tau_\Gamma=\tau_\Lambda\circ\sigma$. For a stable translation quiver $(\Gamma,\tau_\Gamma)$ we will denote by
$\Aut(\Gamma)=\Aut(\Gamma,\tau_\Gamma)$ its automorphism group.

\begin{rmk} 
	If $\Gamma=\Z[A_\infty]/(\tau^n)$ is an exceptional tube of rank $n$, then the group $\aut(\Gamma)=\langle\tau_\Gamma\rangle$ has order $n$. 
\end{rmk}
The group $G$ acts on the module category $\mod A_1$  via equivalences of categories
$\mod A_1\rightarrow\mod A_1, M\mapsto M^g$ for $g\in G$. Since these equivalences commute with the Auslander-Reiten translation of $\Gamma_s(A_1)$,
each $g\in G$ induces an automorphism $t_g$ of the quiver $\Gamma_s(A_1)$. As $t_g$ permutes the components of $\Gamma_s(A_1)$, we can conclude
that $G$ acts on the set of components of $\Gamma_s(A_1)$. For a component $\Theta$ we write $\Theta^g=t_g(\Theta)$ and let 
$G_\Theta=\{g\in G\:\vert\: \Theta^g=\Theta\}$ be the stabilizer of $\Theta$.
If $M$ is an $A_1$-module which belongs to the component $\Theta$, then $G_M\subseteq G_\Theta$.

\begin{lem}\label{stabilizer of component equals stabilizer of module}
Let $\Theta$ be a component of $\Gamma_s(A_1)$ with finite automorphism group $\aut(\Theta)$ such that $\vert G_\Theta\vert$ and $\vert\aut(\Theta)\vert$ 
are relatively prime. Let $M$ be an $A_1$-module which belongs to $\Theta$. Then $G_M=G_\Theta$.
\end{lem}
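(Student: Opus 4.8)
The plan is to exploit the action of $G$ (via the automorphisms $t_g$) on the component $\Theta$, together with the coprimality hypothesis, to show that whenever $g$ fixes $\Theta$ as a whole it must already fix the vertex $M$. We always have $G_M\subseteq G_\Theta$, so only the reverse inclusion needs proof. First I would record that each $g\in G_\Theta$ restricts to an automorphism of the stable translation quiver $\Theta$, hence we get a group homomorphism $G_\Theta\to\aut(\Theta)$, $g\mapsto t_g|_\Theta$. The image of this homomorphism is a subgroup of $\aut(\Theta)$ whose order divides $|\aut(\Theta)|$; it is also a quotient of $G_\Theta$, so its order divides $|G_\Theta|$. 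Since $\gcd(|G_\Theta|,|\aut(\Theta)|)=1$, the image is trivial, i.e. $t_g|_\Theta=\id_\Theta$ for every $g\in G_\Theta$.

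It then remains to observe that if $t_g|_\Theta=\id_\Theta$ then in particular $t_g$ fixes the vertex $[M]$ of $\Theta$, which by definition of $t_g$ means $M^g\cong M$, i.e. $g\in G_M$. This gives $G_\Theta\subseteq G_M$ and hence equality. I would phrase the whole argument through the short exact sequence of groups
\[
1\longrightarrow K\longrightarrow G_\Theta\longrightarrow \aut(\Theta),
\]
where $K$ is the kernel of $g\mapsto t_g|_\Theta$; the coprimality forces $G_\Theta=K$, and $K\subseteq G_M$ is immediate since an element acting trivially on all of $\Theta$ fixes $[M]$ in particular.

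The only subtle point is ensuring that the assignment $g\mapsto t_g|_\Theta$ is genuinely a well-defined group homomorphism into $\aut(\Theta)$ — that is, that restricting the quiver automorphism $t_g$ of $\Gamma_s(A_1)$ to the $g$-stable component $\Theta$ lands in $\aut(\Theta)$ (it commutes with $\tau$ because $t_g$ does globally) and that $t_{gh}|_\Theta = (t_g|_\Theta)(t_h|_\Theta)$. This is essentially bookkeeping: the functorial isomorphism $(M^g)^h\cong M^{hg}$ (or $M^{gh}$, depending on the convention fixed earlier in the excerpt for the $G$-action on $\mod A_1$) shows $g\mapsto t_g$ is an anti-homomorphism or homomorphism of groups into $\aut(\Gamma_s(A_1))$, and restriction to a stable component preserves this. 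I do not expect any real obstacle here; the heart of the statement is just the elementary group-theoretic fact that a common quotient of two groups of coprime order is trivial. Once that is in place, the conclusion $G_M=G_\Theta$ is immediate.
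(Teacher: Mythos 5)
Your argument is correct and is essentially identical to the paper's own proof: both consider the homomorphism $G_\Theta\rightarrow\aut(\Theta)$ induced by the action on the component, use the coprimality of $\vert G_\Theta\vert$ and $\vert\aut(\Theta)\vert$ to conclude it is trivial, and deduce that every element of $G_\Theta$ fixes the vertex $[M]$, hence $G_\Theta\subseteq G_M$. The concern about homomorphism versus anti-homomorphism is harmless, since either way the image is a subgroup whose order divides both $\vert G_\Theta\vert$ and $\vert\aut(\Theta)\vert$.
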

\begin{proof}
The action of $G_\Theta$ on $\Theta$ induces a homomorphism $\psi:G_\Theta\rightarrow\aut(\Theta)$ of groups.
The kernel of this homomorphism is given by $\ker\psi=\bigcap_{N\in\Theta}G_N$. 
Since $\vert G_\Theta\vert$ and $\vert\aut(\Theta)\vert$ have no common divisor, the homomorphism $\psi$ is trivial.
Hence $G_\Theta=\bigcap_{N\in\Theta}G_N$ and thus $G_M=G_\Theta$.
\end{proof}

Now let $N$ be an indecomposable non-projective $A_1$-module and $\Xi$ the corresponding component in $\Gamma_s(A_1)$. Assume there is an
indecomposable non-projective direct summand $M$ of $\ind_1^GN$ and let $\Theta$ be the corresponding component in $\Gamma_s(A)$.
Since $G_N$ is contained in $G_\Xi$, for every indecomposable direct summand $V$ of $\res^G_{G_\Xi}\ind_1^GN$ the $A$-module $\ind_{G_\Xi}^G V$
is indecomposable (\cite[4.5.2]{karpilovsky1987algebraic}).
Hence there is an indecomposable direct summand $U$ of $\res_{G_\Xi}^GM$ such that $\ind_{G_\Xi}^GU=M$.
Denote by $\Psi$ the component of $U$ in $\Gamma_s(A_{G_\Xi})$.

\begin{lem}\label{multiplicity of tubes for induction with cyclic group}
Let $k$ be an algebraically closed field.
Suppose that all $A_1$-modules which belong to $\Xi$ are $G_\Xi$-stable and that $G_\Xi$ is a cyclic group such that $\Char k\nmid \vert G_\Xi\vert$.
Then we have the following:
\begin{enumerate}[(a)]
	\item $\res_1^{G_\Xi}:\Psi\rightarrow\Xi$, $[X]\rightarrow [\res_1^{G_\Xi}X]$ is a morphism of stable translation quivers,
	\item for all $[Y]\in\Xi$ we have $\vert(\res_1^{G_\Xi})^{-1}([Y])\vert\leq\vert G_\Xi\vert$,
	\item if $\Xi$ and $\Psi$ have tree class $A_\infty$, then $\res_1^{G_\Xi}:\Psi\rightarrow\Xi$ preserves the quasi-length, and
	\item if $\Xi$ and $\Psi$ are tubes of finite rank $n$ and $m$, then $m\leq\vert G_\Xi\vert n$.
\end{enumerate}
\end{lem}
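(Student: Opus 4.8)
The plan is to exploit the strong $G_\Xi$-grading of $A_{G_\Xi}$ over $A_1$ together with the hypothesis that $G_\Xi$ is cyclic of order coprime to $\Char k$, so that $kG_\Xi$ is semisimple and the restriction functor $\res_1^{G_\Xi}$ is separable with semisimple relative trace. For part (a), I would check that $\res_1^{G_\Xi}$ sends the almost split sequence ending at an indecomposable $X$ in $\Psi$ to a direct sum of almost split sequences in $\mod A_1$: since $A_{G_\Xi}$ is self-injective and the extension $A_1\subseteq A_{G_\Xi}$ is separable (order of $G_\Xi$ invertible), $\res_1^{G_\Xi}$ and $\ind_1^{G_\Xi}$ are biadjoint exact functors preserving projectivity, hence they take almost split sequences to sums of almost split sequences (this is the standard Green-correspondence-type argument, cf. \cite[4.5]{karpilovsky1987algebraic} or the Auslander--Reiten machinery of \cite{auslander1997representation}). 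Because every $A_1$-module in $\Xi$ is $G_\Xi$-stable, the indecomposable summands of $\res_1^{G_\Xi}X$ all lie in $\Xi$, and the translation is respected because the equivalences $M\mapsto M^g$ commute with $\tau$; this gives a morphism of stable translation quivers.

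For part (b), the fibre of $[Y]\in\Xi$ under $\res_1^{G_\Xi}$ consists of the components' vertices $[X]\in\Psi$ with $Y\mid\res_1^{G_\Xi}X$. By adjunction such $X$ is a summand of $\ind_1^{G_\Xi}Y$, and by the Krull--Schmidt theorem the number of non-isomorphic indecomposable summands of $\ind_1^{G_\Xi}Y$ is at most $\dim_k\End_{A_{G_\Xi}}(\ind_1^{G_\Xi}Y)/\rad$, which by the smash-product description $\End_{A_{G_\Xi}}(\ind_1^{G_\Xi}Y)\cong\End_{A_1}(Y)\#kG_\Xi$ (the left-module analogue of \cite[2.3]{van1995h} used already in Section \ref{sec:decomp induced module}) and the $G_\Xi$-stability of $Y$ collapses, after passing modulo radical and using $\End_{A_1}(Y)/\rad\cong k$, to $k\#kG_\Xi\cong kG_\Xi$; since $kG_\Xi$ is commutative semisimple of dimension $|G_\Xi|$ this is a product of at most $|G_\Xi|$ copies of $k$, so there are at most $|G_\Xi|$ such summands, giving $|(\res_1^{G_\Xi})^{-1}([Y])|\le|G_\Xi|$.

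For (c), if $\Xi$ and $\Psi$ have tree class $A_\infty$, then quasi-length is intrinsic: it is the length of the unique sectional path to the boundary. Since by (a) $\res_1^{G_\Xi}$ is a morphism of stable translation quivers, it sends sectional paths to walks, and I would argue it sends the boundary of $\Psi$ to the boundary of $\Xi$ — the quasi-simple $U\in\Psi$ has $\res_1^{G_\Xi}U$ with all summands quasi-simple, because the almost split sequence at $U$ has at most one indecomposable non-projective middle term, and restriction (being exact and middle-term-preserving by the separability argument of (a), plus Lemma \ref{homogeneous tube has no projective middle terms}-type absence of projectives in the relevant setting) keeps this. A downward induction on quasi-length along the sectional paths then gives $\ql(\res_1^{G_\Xi}X)=\ql(X)$. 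Part (d) is then a counting argument: a tube of rank $m$ has exactly $m$ quasi-simple vertices, a tube of rank $n$ has exactly $n$; by (c) restriction maps the $m$ quasi-simples of $\Psi$ onto quasi-simples of $\Xi$, and by (b) each of the $n$ quasi-simples of $\Xi$ has at most $|G_\Xi|$ preimages, so $m\le|G_\Xi|n$.

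The main obstacle I anticipate is part (a), specifically verifying rigorously that $\res_1^{G_\Xi}$ transforms almost split sequences into (sums of) almost split sequences rather than merely exact sequences — one must control that no indecomposable summand becomes projective or splits off, which is exactly where the coprimality of $|G_\Xi|$ with $\Char k$ (forcing $kG_\Xi$ semisimple, hence the extension $A_1\subseteq A_{G_\Xi}$ separable and $\res_1^{G_\Xi}$ a separable functor in the sense of N\u{a}st\u{a}sescu) has to be invoked carefully; the remaining steps (b)--(d) are then bookkeeping with Krull--Schmidt, the smash-product endomorphism formula, and the combinatorics of tubes.
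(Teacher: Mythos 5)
Your overall architecture (separability of $A_1\subseteq A_{G_\Xi}$, restriction of almost split sequences being direct sums of almost split sequences, counting quasi-simples via the fibre bound) matches the paper's, but there is a genuine gap that sits underneath all four parts: you never prove that $\res_1^{G_\Xi}V$ is \emph{indecomposable} for $V$ belonging to $\Psi$. Without this, the map $[X]\mapsto[\res_1^{G_\Xi}X]$ is not even well defined as a map of quivers $\Psi\rightarrow\Xi$ (a vertex must go to a single vertex of $\Xi$, not to a list of summands); the restricted sequence $\res_1^{G_\Xi}\mathcal{E}$ could be a direct sum of \emph{several} almost split sequences rather than one, so you get no arrow $[\res X]\rightarrow[\res Y]$ and no identity $\res\tau_\Psi(Y)\cong\tau_\Xi(\res Y)$ between indecomposables; and your argument in (c) that ``restriction keeps this'' silently assumes the same fact. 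Separability alone does not give indecomposability of restrictions, and you explicitly allow for multiple summands when you write that ``the indecomposable summands of $\res_1^{G_\Xi}X$ all lie in $\Xi$.''

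The paper closes this gap with a preliminary dimension count that is exactly where the hypotheses are spent: each indecomposable summand $U_i$ of $\res_1^{G_\Xi}V$ lies in $\Xi$ by \cite[4.5.8]{marcus1999representation}, hence is $G_\Xi$-stable by assumption; since $G_\Xi$ is cyclic of order $r$ prime to $\Char k$ and $k$ is algebraically closed, $\ind_1^{G_\Xi}U_i$ splits into $r$ indecomposables each of dimension $\dim_kU_i$ (\cite[4.5.15, 4.5.17]{karpilovsky1987algebraic}), each of which therefore restricts back to $U_i$; and $V$, being a summand of $\ind_1^{G_\Xi}\res_1^{G_\Xi}V$, is isomorphic to one of these, forcing $\res_1^{G_\Xi}V\cong U_i$ to be indecomposable. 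Your smash-product computation in (b) shows only that $\ind_1^{G_\Xi}Y$ has at most $|G_\Xi|$ indecomposable summands; it does not control their dimensions and so cannot substitute for this step. Once the preliminary indecomposability statement is in place, your arguments for (a)--(d) go through essentially as in the paper (the paper uses \cite[Theorem 6]{solberg1989strongly} for the almost-split-sequence claim you flag as the main obstacle), so the fix is localized but indispensable.
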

\begin{proof}
We first show that under our assumptions the restriction of every $A_{G_\Xi}$-module in $\Psi$ is an indecomposable $A_1$-module which belongs to $\Xi$. 
Let $V$ be an indecomposable $A_{G_\Xi}$-module in $\Psi$ and let $\res_1^{G_\Xi}V=\bigoplus_{i=1}^nU_i$ be its decomposition into
indecomposable $A_1$-modules. By \cite[4.5.8]{marcus1999representation}, all these modules belong to $\bigcup_{g\in G_\Xi}\Xi^g=\Xi$
and therefore are $G_\Xi$-stable. Let $r=\vert G_\Xi\vert$. Since $G_\Xi$ is cyclic, $\Char k\nmid \vert G_\Xi\vert$ 
and $k$ is algebraically closed, we get due to \cite[4.5.15,4.5.17]{karpilovsky1987algebraic}
a decomposition $\ind_1^{G_\Xi}U_i=\bigoplus_{j=1}^rW_{i,j}$ into indecomposable $A_{G_\Xi}$-modules of dimension $\dim_kW_{i,j}=\dim_kU_i$  for all $i\in\{1,\dots,n\}$.
In particular, the restriction $\res_1^{G_\Xi}W_{i,j}$ is isomorphic to $U_i$.
Thanks to \cite[2.3.4]{karpilovsky1987algebraic} and \cite[2.7.24]{karpilovsky1987algebraic}, the
module $V$ is a direct summand of $\ind_1^{G_\Xi}\res_1^{G_\Xi}V=\bigoplus_{i=1}^n\bigoplus_{j=1}^rW_{i,j}$ and therefore
isomorphic to one of the $W_{i,j}$. In particular, the module $\res_1^{G_\Xi}V\cong\res_1^{G_\Xi}W_{i,j}\cong U_i$ is indecomposable.
\begin{enumerate}[(a)]
	\item 
	Let $X\rightarrow Y$ be an arrow in $\Psi$. Then there is an almost split sequence of $A_{G_\Xi}$-modules
	\[
	\mathcal{E}:\:\ses{\tau_{\Psi}(Y)}{E}{Y}{}{}
	\]
	such that $X$ is a direct summand of $E$ and the indecomposable $A_1$-modules $\res_1^{G_\Xi}X$ and $\res_1^{G_\Xi}Y$ belong to $\Xi$.
	By \cite[Theorem 6]{solberg1989strongly}, the sequence $\res_1^{G_\Xi}\mathcal{E}$ is a direct sum of almost split sequences. Since $\res_1^{G_\Xi}Y$ and $\res_1^{G_\Xi}\tau_{\Psi}(Y)$ are
	indecomposable, the sequence $\res_1^{G_\Xi}\mathcal{E}$ is almost split. In particular, $\res_1^{G_\Xi}\tau_{\Psi}(Y)\cong\tau_{\Xi}(\res_1^{G_\Xi}Y)$.
	Moreover, this gives us an arrow $\res_1^{G_\Xi}X\rightarrow\res_1^{G_\Xi}Y$. Therefore
	$\res_1^{G_\Xi}:\Psi\rightarrow\Xi$, $[X]\mapsto [\res_1^{G_\Xi}X]$ is a morphism of stable translation quivers.
	\item
	Let $[Y]\in\Xi$ and $[X]\in\Psi$ with $\res_1^{G_\Xi}([X])=[Y]$. As before, we have a decomposition 
	$\ind_1^{G_\Xi}Y=\bigoplus_{i=1}^rY_i$ into indecomposable $A_{G_\Xi}$-modules and
	$X$ is a direct summand of $\ind_1^{G_\Xi}\res_1^{G_\Xi}X=\ind_1^{\G_\Xi}Y=\bigoplus_{i=1}^rY_i$.
	Therefore the number of preimages of $[Y]$ is bounded by $r$.
	\item 
 We only need to show that $\res_1^{G_\Xi}:\Psi\rightarrow\Xi$
 sends quasi-simple modules to quasi-simple modules.
 Let $Y$ be a quasi-simple module in $\Psi$ and let
 \[\ses{\tau_\Psi(Y)}{E}{Y}{}{}\]
 be the almost split sequence ending in $Y$.
 As shown in (a), the sequence $\res_1^{G_\Xi}\mathcal{E}$ is almost split.
 As $Y$ is quasi-simple, the module $E$ is the direct sum $X\oplus P$ of an indecomposable module $X$ and a projective
 module $P$. Since $X$ belongs to $\Psi$, the module
 $\res_1^{G_\Xi}X$ is indecomposable, so that $\res_1^{G_\Xi}E=\res_1^{G_\Xi}X\oplus\res_1^{G_\Xi}P$
 is the direct sum of an indecomposable and a projective module.
 Hence $\res_1^{\G_\Xi}Y$ is quasi-simple.
 \item   Let $Y_1,\dots,Y_n$ be the quasi-simple modules in $\Xi$. 
As $\res_1^{\G_\Xi}$ preserves the quasi-length, every module belonging to $(\res_1^{\G_\Xi})^{-1}([Y_i])$ is quasi-simple.
  Applying (b) yields that $\Psi$ has, up to isomorphism, at most $rn$ quasi-simple modules.
\end{enumerate}
\end{proof}
\section{Modules of domestic group schemes}\label{sec:modules domestic groups}
Let $k$ be an algebraically closed field of characteristic $p>2$. In this section we will prove the main results 
for the classification of the modules for a certain class of domestic group schemes, the so-called amalgamated polyhedral group schemes.
Such a group scheme is the image of $SL(2)_1\tilde{\G}$ under the canonical projection $SL(2)\rightarrow PSL(2)$ where $\tilde{\G}$
denotes a binary polyhedral group scheme, i.e. a finite linearly reductive subgroup scheme of $SL(2)$.\\
For any domestic group scheme $\G$ the factor group $\G/\G_{lr}$, by the largest linearly reductive normal subgroup scheme $\G_{lr}$ of $\G$,
is isomorphic to an amalgamated polyhedral group scheme.
The principal blocks of these group schemes are isomorphic and all non-simple blocks of the group algebra $k\G$ 
are Morita-equivalent to this block.\\
At the end of this section we will, by way of example, classify the modules in the most complicated case, the amalgamated dihedral group schemes.
\begin{example}
	For $m\in\N$ consider the subgroup $T_{(m)}\subseteq T$ given by
	\[T_{(m)}(R):=\{\left(\begin{smallmatrix}
	x & 0 \\ 0 & x^{-1}
	\end{smallmatrix}\right)\;\vert\; x\in\mu_{(m)}(R)\},\] for any commutative $k$-algebra $R$.
	Let $\pi:SL(2)\rightarrow PSL(2)$ be the canonical projection. Then the image faisceau
	$P\cS\cC_{(m)}:=\pi(SL(2)_1N_{(2m)})$ of $\pi$ restricted to $SL(2)_1N_{(2m)}$
	is called amalgamated cyclic group scheme. \\
	For $m\geq 2$ let $N_{(m)}:=T_{(m)}\cH_4$ be a binary dihedral group scheme as defined in 
	\cite[Section 3]{farnsteiner2006polyhedral}. By definition,  
	$\cH_4$ is a reduced subgroup scheme of $N_{SL(2)}(T)$ with $\cH_4(k)=<w_0>$ and
	there is $h_4\in GL(2)(k)$ with $\cH_4=h_4T_{(4)}h_4^{-1}$.
	Then the amalgamated dihedral group scheme is given by
	$P\cS\cQ_{(m)}:=\pi(SL(2)_1N_{(2m)})$.
\end{example}
For any amalgamated polyhedral group scheme $\G$ there is an $r\geq 1$ such that $\G^0$ is isomorphic to
$SL(2)_1T_r$.
Recall that for $l\in\N$, $i\in\{0,\dots,p-2\}$ and $g\in SL(2)\setminus(B\cup w_0B)$
there is a unique $SL(2)_1T_r$-module $X(i,g,l)$ such that $\res_{SL(2)_1}X(i,g,l)\cong g.W(lp^r+i)$.
For $g\in B\cup w_0B$, the $SL(2)_1$-module $g.W(lp+i)$ is stable under the action of $SL(2)_1T_r$.
By abuse of notation we write in this situation $X(i,g,l)=g.W(lp+i)$.\\

Let $\pi:SL(2)\rightarrow PSL(2)$ be the canonical projection. The kernel of $\pi$ is the reduced group scheme generated by
$\left(\begin{smallmatrix}
-1 & 0 \\
0  & -1
\end{smallmatrix}\right)$.
Let $H:=N_{SL(2)}(T)\cap B^g$ and $\G:=\pi(SL(2)_1T_rH)$. For all $g\in SL(2)(k)$ the module $X(i,g,l)$ can be viewed as an $SL(2)_1T_rH$-module as in \ref{extending x(i,g,l)} and 
$\left(\begin{smallmatrix}
-1 & 0 \\
0  & -1
\end{smallmatrix}\right)$
acts via $(-1)^i$. Therefore, for even $i$ the action factors through $\pi$, so that $X(i,g,l)$ is a $\G$-module. 
For odd $i$ we need to twist the action by a special character
$\gamma:SL(2)_1T_rH\rightarrow \mu_{(1)}$.
To define this character consider the homomorphism \[\phi:SL(2)_1T_rH\rightarrow SL(2)_1T_r^{g^{-1}}H^{g^{-1}}\;,\;x\mapsto x^{g^{-1}}\]
of group schemes and the character \[\tilde{\gamma}:SL(2)_1T_r^{g^{-1}}H^{g^{-1}}\rightarrow\mu_{(1)}\;,\; 
\left(\begin{smallmatrix}
a & b \\
c  & d
\end{smallmatrix}\right)
\mapsto d^{p^r}\] (this is a character as $H^{g^{-1}}\subseteq B$ and as it is trivial on $SL(2)_1T_r^{g^{-1}}$).
Then $\gamma:=\tilde{\gamma}\circ\phi$. By definition $\gamma_{\vert SL(2)_1T_r}$ is trivial and 
$\left(\begin{smallmatrix}
-1 & 0 \\
0  & -1
\end{smallmatrix}\right)$
acts trivially on $X(i,g,l)\otimes_kk_\gamma$.  Consequently, the action of $SL(2)_1T_rH$ on $X(i,g,l)\otimes_kk_\gamma$ factors through $\pi$.
Therefore we can lift the $SL(2)_1T_rH$-module $X(i,g,l)$ to a $\G$-module $Y(i,g,l)$ for all $i\in\{0,\dots,p-2\}$.\\
In the same way we can twist the Weyl modules $V(d)$ and obtain a module $\tilde{V}(d)$ for any 
amalgamated polyhedral group scheme.\\

 As before, let $C\subseteq SL(2,k)$
be a set of representatives for $SL(2,k)/B$ with $1,w_0\in C$. Let $\G$ be a group scheme and $\cN$ be a
normal subgroup scheme of $\G$. For an $\cN$-module $Y$ we denote by $\G_Y$ the stabilizer of $Y$.
In our situation, we will have $\G^0\subseteq \cN$, so that $\G_Y$ is the unique subgroup scheme of $\G$
with $\G^0\subseteq\G_Y$ and $\G_Y(k)=\G(k)_Y$. In the same way, for $h\in\G(k)$, the conjugated subgroup
scheme $\G_Y^h$ is determined by $\G(k)_Y^h$.
\begin{prop}
Let $\G$ be an amalgamated polyhedral group scheme and $r$ be the height of $\G^0$.
Then the following statements hold:
\begin{enumerate}[(a)]
\item   For all $l\in\N$, $i\in\{0,\dots,p-2\}$ and $g\in C$ there is an $h\in SL(2)(k)$ such that the group scheme $\G_{Y(i,g,l)}^h$ is either equal to
		$P\cS\cQ_{(p^r)}$ or to $P\cS\cC_{(np^r)}$ for one $n\in\N$ with $(n,p)=1$. 
\item	If $\G_{Y(i,g,l)}^h$ is equal to $P\cS\cQ_{(p^r)}$, then $hg\in h_4B\cup h_4w_0B$.
\item	If $\G_{Y(i,g,l)}^h$ is equal to $P\cS\cC_{(np^r)}$ and $n>1$, then $hg\in B\cup w_0B$. 
\item 	$Y(i,g,l)$ is a $\G_{Y(i,g,l)}$-module.
\end{enumerate}
\end{prop}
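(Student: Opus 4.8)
The plan is to treat the four assertions in a dependent chain, with (a) as the technical heart and (b)--(d) as comparatively routine consequences. First I would fix $g\in C$, $i\in\{0,\dots,p-2\}$, $l\in\N$, and recall that $Y(i,g,l)$ restricts over $SL(2)_1$ to $g.W(lp^r+i)$, so by the rank variety computation in section~1 its support variety is the single point $\P(V_{\sl2}(g.W(lp^r+i)))=\{g.[e]\}$. Since $g.W(lp^r+i)$ lies in a homogeneous tube, Remark~\ref{stabilizers equal} gives that its $SL(2,k)$-stabilizer coincides with the stabilizer of $g.[e]$, i.e. with $SL(2,k)\cap gBg^{-1}$ up to the kernel of $\pi$. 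The stabilizer $\G_{Y(i,g,l)}$ is then the subgroup scheme of $\G$ with $\G^0=SL(2)_1T_r$ inside it and $\G(k)$-part equal to this point-stabilizer intersected with $\G(k)$. I would then conjugate by a suitable $h\in SL(2)(k)$ to move $g.[e]$ into a standard position: either into $[e]$ itself (so that the point-stabilizer becomes a subgroup of the standard Borel $B$, which forces the reduced part to sit inside $N_{SL(2)}(T)$ and pins down an amalgamated cyclic group scheme $P\cS\cC_{(np^r)}$), or — in the special symmetric case where the stabilizer is larger — into the ``dihedral'' position governed by the element $h_4$, giving $P\cS\cQ_{(p^r)}$. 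This is essentially the classification of which linearly reductive subgroup schemes of $PSL(2)$ can arise as stabilizers, combined with the constraint that $\G^0\cong SL(2)_1T_r$; the binary polyhedral classification from \cite{farnsteiner2006polyhedral} together with the fact that a point of $\P^1$ has stabilizer in $PGL(2)$ either a Borel or (for the two-point orbit) a dihedral-type normalizer of a torus is what drives this.

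For (b): if $\G_{Y(i,g,l)}^h = P\cS\cQ_{(p^r)} = \pi(SL(2)_1N_{(2p^r)})$, then by construction $N_{(2p^r)} = T_{(2p^r)}\cH_4$ with $\cH_4 = h_4T_{(4)}h_4^{-1}$, so the reduced part of the stabilizer contains the Weyl-group element $h_4w_0h_4^{-1}$. Stability of $hg.W(lp^r+i)$ under this element, via Remark~\ref{stabilizers equal} again, forces $hg.[e]$ to be fixed setwise by $h_4\langle w_0\rangle h_4^{-1}$, i.e. $hg.[e]\in\{h_4.[e], h_4w_0.[e]\}$; unwinding, $hg\in h_4B\cup h_4w_0B$. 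For (c): if $\G_{Y(i,g,l)}^h = P\cS\cC_{(np^r)}$ with $n>1$, the torus $T_{(2np^r)}$ has order strictly exceeding what a non-Borel point-stabilizer would allow, so the fixed point $hg.[e]$ must be one of the two $T$-fixed points $[e],[f]$ of $\P^1$, giving $hg\in B\cup w_0B$. (The case $n=1$ is excluded precisely because then the torus is too small to distinguish, which is why the hypothesis $n>1$ appears.)

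For (d): I need that $Y(i,g,l)$ genuinely carries a $\G_{Y(i,g,l)}$-module structure, not merely that its isomorphism class is fixed. Here I would invoke Theorem~\ref{extending x(i,g,l)}: for $H\subseteq N_{SL(2)}(T)\cap B^g$ the module $X(i,g,l)$ extends to $SL(2)_1T_rH$, and after the character twist by $\gamma$ it descends through $\pi$ to give $Y(i,g,l)$ as a module over $\pi(SL(2)_1T_rH)$. The content of (d) is then that the subgroup scheme $H$ can be taken large enough that $\pi(SL(2)_1T_rH)$ is all of $\G_{Y(i,g,l)}$ — which is exactly what the conjugate identification in (a) tells us, since the reduced part of $\G_{Y(i,g,l)}^h$ lies in $N_{SL(2)}(T)$ and hence, conjugating back, the reduced part of $\G_{Y(i,g,l)}$ lies in $N_{SL(2)}(T)\cap B^g$ (up to $\ker\pi$). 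Because $SL(2)_1T_r = \G^0$ is already contained in $\G_{Y(i,g,l)}$ and the full stabilizer is generated over $\G^0$ by this reduced part, the extension/descent of Theorem~\ref{extending x(i,g,l)} produces the required $\G_{Y(i,g,l)}$-structure.

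The main obstacle I expect is part~(a): carefully matching the abstract point-stabilizer in $PSL(2)$ against the explicit list of amalgamated polyhedral group schemes and verifying that the infinitesimal part is forced to be $SL(2)_1T_r$ with the correct $r$, keeping track of the distinction between a group scheme and its $k$-points (and the role of $\ker\pi$ acting by a sign). The bookkeeping with $h_4$, the factor of $2$ in $N_{(2m)}$, and the conjugations relating $B^g$, $gBg^{-1}$, and the standard torus is where the argument is delicate; once (a) is set up correctly, (b)--(d) follow by the same ``stabilizer of the module equals stabilizer of the point'' principle plus Theorem~\ref{extending x(i,g,l)}.
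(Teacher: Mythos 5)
Your treatment of (b), (c) and (d) is essentially the paper's route: identify $\hat{G}_{Y(i,g,l)}$ with the stabilizer of the point $[g.e]$ via Remark \ref{stabilizers equal}, determine from the explicit form of $T_{(2np^r)}$ resp.\ $\cH_4$ which points of $\P(V_{\sl2})$ these groups can fix, and feed the resulting membership $hg\in B\cup w_0B$ resp.\ $hg\in h_4B\cup h_4w_0B$ into Theorem \ref{extending x(i,g,l)} to obtain the module structure in (d). Two local slips: in (b) the relevant group is $\cH_4(k)=\langle w_0\rangle=h_4\mu_4h_4^{-1}$ itself, not $h_4\langle w_0\rangle h_4^{-1}$ (your fixed-point set is nevertheless correct); and in (d) the order of operations matters --- the paper verifies $T_{(2n)}\subseteq N_{SL(2)}(T)\cap B^{hg}$ (resp.\ $\cH_4\subseteq N_{SL(2)}(T)\cap B^{hg}$) for the \emph{conjugated} module $Y(i,hg,l)$ using (b)/(c), applies Theorem \ref{extending x(i,g,l)} there, and only then conjugates by $h^{-1}$. ``Conjugating back'' first, as you propose, lands the reduced part in $N_{SL(2)}(T)^{h^{-1}}\cap B^{g}$, which is not the hypothesis of that theorem.

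The genuine gap is in (a). The paper does not argue via fixed points here: it quotes \cite[4.3.2, 4.1.3]{farnsteiner2012extensions} to get that $\G_{Y(i,g,l)}$ is isomorphic to an amalgamated polyhedral group scheme with the isomorphism realized by conjugation by some $h\in SL(2)(k)$ (via \cite[3.3]{farnsteiner2006polyhedral}), and then invokes Corollary \ref{inertia group cyclic for tubes} --- which your plan never uses --- to conclude that $G_{Y(i,g,l)}$ is cyclic, which strikes every entry of the table except $P\cS\cC_{(np^r)}$ and $P\cS\cQ_{(p^r)}$. Your substitute, conjugating $g.[e]$ to a standard position and reading off the stabilizer, has two problems. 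First, the conjugator $h$ must simultaneously standardize the point \emph{and} carry $\G^0=\pi(SL(2)_1T_r)$ back to itself; for $r\geq 2$ this forces $h$ into the normalizer of $SL(2)_1T_r$, and controlling this is precisely the content of the citations you would be replacing. Second, the dichotomy is mischaracterized: the $P\cS\cQ_{(p^r)}$ case is not ``the case where the stabilizer is larger'' (its reduced part has order exactly $2$, smaller than the order-$n$ stabilizers of the two $T$-fixed points); it is the case where the generating involution \emph{inverts} rather than centralizes $T_r$, equivalently where $g.[e]$ is a fixed point of a reflection-type element of $N_{SL(2)}(T)\setminus T$ rather than of the torus part. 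Without the cyclicity input or a correct version of this dichotomy, the reduction to exactly the two named group schemes is not established.
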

\begin{proof}
\begin{enumerate}[(a)]
\item The variety $\P(V_{\sl2})\cong\P^1$ is smooth. Let $G:=\G(k)$.
Thanks to \cite[4.3.2]{farnsteiner2012extensions} and \cite[4.1.3]{farnsteiner2012extensions}, the subgroup scheme $\G_{Y(i,g,l)}$ is
isomorphic to an amalgamated polyhedral group scheme. In particular, by \cite[3.3]{farnsteiner2006polyhedral} this isomorphism is given
via conjugation by an element $h\in SL(2)(k)$.
Due to \ref{inertia group cyclic for tubes}, the stabilizer $\G_{Y(i,g,l)}(k)=G_{Y(i,g,l)}$ is a cyclic group, so that we obtain (a) from \cite[4.3.2]{farnsteiner2012extensions} and the table in \cite[3.3]{farnsteiner2006polyhedral}.
\item A direct computation shows $\G_{Y(i,g,l)}^h=\G_{Y(i,hg,l)}$. Assume $\G_{Y(i,g,l)}=P\cS\cQ_{(p^r)}$. Let $\pi:SL(2)\rightarrow PSL(2)$ be the canonical projection and $\hat{G}\subseteq SL(2)(k)$ its preimage of
$G$. By definition of $P\cS\cQ_{(p^r)}$, the group $\hat{G}_{Y(i,g,l)}$ is a subgroup of $T^{h_4}$. Therefore $\hat{G}_{Y(i,g,l)}$ 
does only stabilize the points $[h_4.e]$ and $[h_4w_0.e]$ in $\P(V_{\sl2})$. 
By \ref{stabilizers equal}, we have $\hat{G}_{Y(i,g,l)}=\hat{G}_{[g.e]}$. Therefore we obtain 
$[g.e]\in\{[h_4.e],[h_4w_0.e]\}$ which yields the assertion.
\item Assume $\G_{Y(i,g,l)}=P\cS\cC_{(np^r)}$ with $n>1$ and $(n,p)=1$. 
By definition of $P\cS\cC_{(np^r)}$, the group $\hat{G}_{Y(i,g,l)}$ is a subgroup of $T$. The only points in $\P(V_{\sl2})$ which are stabilized by $T$
are $[e]$ and $[w_0.e]$. This yields as above $g\in B\cup w_0B$. 
\item First let $\G_{Y(i,hg,l)}=\G_{Y(i,g,l)}^h=P\cS\cC_{(np^r)}$ with $n>1$ and $(n,p)=1$. By (c), we obtain $hg\in B\cup w_0B$. Therefore
$T_{(2n)}\subseteq N_{SL(2)}(T)\cap B^{hg}$ and $Y(i,hg,l)$ is an $SL(2)_1T_{(2np^r)}$-module. As
$\left(\begin{smallmatrix}
-1 & 0 \\
0  & -1
\end{smallmatrix}\right)$ acts trivially on $Y(i,hg,l)$, it is also a $P\cS\cC_{(np^r)}$-module. \\
If $\G_{Y(i,hg,l)}$ is equal to $P\cS\cQ_{(p^r)}$, we obtain $hg\in h_4B\cup h_4w_0B$ by (b).
Hence $\cH_4\subseteq N_{SL(2)}(T)\cap B^{hg}$. As $\pi(SL(2)_1T_r\cH_4)=P\cS\cQ_{(p^r)}$,
we obtain that $Y(i,hg,l)$ is a $P\cS\cQ_{(p^r)}$-module.\\
In both cases conjugation by $h^{-1}$ yields that $Y(i,g,l)$ is a $\G_{Y(i,g,l)}$-module.
\end{enumerate}
\end{proof}
Let $\G$ be an amalgamated polyhedral group scheme. Then $\G(k)$ acts on $X:=SL(2)/B$ via left multiplication of the preimage of the projection
$SL(2)\rightarrow PSL(2)$. We denote by $C_\G$ a set of representatives for $X/\G(k)$ with $1\in C_\G$ and $w_0\in C_\G$, if $w_0$
is not in the same orbit as $1$. \\
We say that an extension $A:B$ of $k$-algebras is separable if the multiplication $A\otimes_BA\rightarrow A$ is a split surjective homomorphism
of $(A,A)$-bimodules. Using basic properties of separable extensions (\cite[10.8]{pierce1982associative}), we obtain for every indecomposable $A$-module $M$
an indecomposable direct summand $N$ of the $B$-module $\res_B^AM$ such that $M$ is a direct summand of $\ind_B^AN$. 
\begin{thm}\label{modules in tubes}
Let $\G$ be a amalgamated polyhedral group scheme.
Let $M$ be an indecomposable $\G$-module of complexity $1$. 
Then there are unique $l\in\N$, $i\in\{0,\dots,p-2\}$ and $g\in C_\G$ such that $M$ is isomorphic to $\ind_{\G_{Y(i,g,l)}}^\G (Y(i,g,l)\otimes_kk_\alpha)$ for a character $\alpha\in X(\G_{Y(i,g,l)})$.
Moreover, the character $\alpha$ can be chosen as a unique element of a subgroup of $X(\G_{Y(i,g,l)})$ determined by the following cases:
 \begin{enumerate}[(a)]
 \item If $g\in\{1,w_0\}$ then $\alpha\in X(\G_{Y(i,g,l)}/\G_1)$.
 \item If $g\not\in\{1,w_0\}$ then $\alpha\in X(\G_{Y(i,g,l)}/\G^0)$.
 \end{enumerate}
\end{thm}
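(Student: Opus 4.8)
The plan is to reduce the classification of indecomposable $\G$-modules of complexity $1$ to the already-completed classification over $\G^0\cong SL(2)_1T_r$ (the final theorem of Section~1), combined with the generalized Clifford theory of Section~\ref{sec:decomp induced module} and the separability of $k\G : k\G^0$. First I would note that $k\G : k\G^0$ is a separable extension (the index $[\G(k):\G^0(k)]$ is prime to $p$ since $\G(k)$ is linearly reductive), so by the cited properties of separable extensions, every indecomposable $\G$-module $M$ of complexity $1$ is a direct summand of $\ind_{\G^0}^\G N$ for some indecomposable direct summand $N$ of $\res_{\G^0}^\G M$. By the complexity inequalities used already in the proof of \ref{inertia group cyclic for tubes}, $N$ has complexity $1$, hence $\res_{\G^0}^\G M$ is (a sum of) periodic $SL(2)_1T_r$-modules lying in homogeneous or exceptional tubes; by the $SL(2)_1T_r$-classification (and the conventions $X(i,g,l)=g.W(lp+i)$ for $g\in B\cup w_0B$), $N\cong X(i,g,l)\otimes_k k_\lambda$ for suitable $l\in\N$, $i\in\{0,\dots,p-2\}$, $g\in C$, $\lambda$.

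Next I would replace $N$ by the lifted module $Y(i,g,l)$ constructed in the preamble to this theorem: the $SL(2)_1T_rH$-module $X(i,g,l)$ (possibly twisted by $\gamma$) descends to a $\G_{Y(i,g,l)}$-module $Y(i,g,l)$ by the previous proposition, part (d). Now apply the transitivity of induction in stages: $\ind_{\G^0}^\G N = \ind_{\G_{Y(i,g,l)}}^\G\bigl(\ind_{\G^0}^{\G_{Y(i,g,l)}} N'\bigr)$ where $N'$ is the $\G^0$-module underlying a suitable twist of $Y(i,g,l)$. Here is where Section~\ref{sec:decomp induced module} enters: the inner induction $\ind_{\G^0}^{\G_{Y(i,g,l)}}$ of $Y(i,g,l)$ should decompose according to the projective indecomposable $(\G_{Y(i,g,l)}/\G^0)$-modules via \ref{decomp of induced stable modules} in case (b), or via the multiplicative quotient $\G^0/\G_1$ in case (a) — the two cases in the statement correspond precisely to whether $\G_{Y(i,g,l)}/\G^0$ or $\G_{Y(i,g,l)}/\G_1$ is the relevant multiplicative group. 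The key point is that because we have passed to the \emph{stabilizer} $\G_{Y(i,g,l)}$, the module $Y(i,g,l)$ is $\G^0/\G_1$-regular (resp. $\G_{Y(i,g,l)}/\G^0$-regular) after twisting by the appropriate characters — this is exactly the regularity hypothesis needed to invoke \ref{decomp of induced stable modules}, and it is what forces $\alpha$ to range only over the quotient character group stated in (a)/(b) rather than all of $X(\G_{Y(i,g,l)})$.

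Finally, for the outer step $\ind_{\G_{Y(i,g,l)}}^\G$: since $\G_{Y(i,g,l)}$ is by definition the stabilizer of $Y(i,g,l)$, the module $Y(i,g,l)\otimes_k k_\alpha$ is $\G(k)$-stable-at-its-stabilizer, so by the standard Clifford-theoretic statement (\cite[4.5.2]{karpilovsky1987algebraic}, as used in Section~\ref{sec:AR-quiver group graded}) the induced module $\ind_{\G_{Y(i,g,l)}}^\G(Y(i,g,l)\otimes_k k_\alpha)$ is indecomposable. Counting dimensions, or comparing with the $SL(2)_1T_r$-classification restricted back down, shows every indecomposable $M$ of complexity $1$ arises this way, and the orbit-representative set $C_\G$ for $X/\G(k)$ together with the uniqueness clauses in \ref{decomp of induced stable modules} (where $M\otimes_k P_i\cong M\otimes_k P_j$ iff $P_i\cong P_j$) give the uniqueness of the triple $(l,i,g)$ and of $\alpha$ within the named subgroup of $X(\G_{Y(i,g,l)})$. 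The main obstacle I expect is the bookkeeping of characters: verifying that after all the twists ($\gamma$, $k_\lambda$, $k_\alpha$) the relevant module is genuinely regular over the correct multiplicative quotient so that \ref{decomp of induced stable modules} applies cleanly, and matching the resulting parametrization to the $SL(2)_1T_r$-list without double-counting — in particular handling the distinction between $g\in\{1,w_0\}$ (where the torus itself supplies the characters, case (a)) and $g\notin\{1,w_0\}$ (case (b)) uniformly.
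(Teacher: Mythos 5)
Your plan follows the paper's proof essentially step for step: separability of $k\G:k\G^0$ to reduce to a summand $N$ of $\res_{\G^0}^\G M$, the complexity inequalities, identification of $N$ via the $SL(2)_1T_r$-classification, the split into the two cases according to whether $g\in\{1,w_0\}$ (inducing from $\G_1$) or not (inducing from $\G^0$), the application of \ref{decomp of induced stable modules} for the inner induction and of \cite[4.5.2]{karpilovsky1987algebraic} for the outer one, and uniqueness by restricting back to $\G_1$. The one verification you flag as an obstacle --- regularity --- is handled in the paper exactly as you would hope: $p\nmid\dim_k Y(i,g,l)$ together with \cite[2.1.4]{farnsteiner2009group} gives $\G^0/\G_1$-regularity, so no genuine difficulty remains.
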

\begin{proof}
The group algebra $k\G$ is isomorphic to $k\G^0\#k\G(k)$ and thanks to \cite[6.2.1]{farnsteiner2006polyhedral} the group
$\G(k)$ is linearly reductive. By \cite[3.15]{doi1989hopf}, the extension $k\G:k\G^0$ is separable, so that
there is an indecomposable direct summand $N$ of $\res_{\G^0}^\G M$ such that $M$ is a direct
summand of $\ind^\G_{\G^0} N$. 
General properties of the complexity (\cite[II.2]{shanghai}) yield
\[1=\cx_{k\G}(M)\leq\cx_{k\G}(\ind_{\G^0}^\G N)\leq\cx_{k\G^0}(N)\leq\cx_{k\G^0}(M)\leq\cx_{k\G}(M)=1.\]
Hence $N$ has also complexity $1$ and consequently there is
$g\in C$ with $V_{\sl2}(N)=\{g.[e]\}$. We first assume that $g\in\{1,w_0\}$. Then there are unique 
$l\in\N$ and $i\in\{0,\dots,p-2\}$ with $\res_{\G_1}^{\G^0}N\cong g.W(lp+i)$.
Moreover, as $\G^0/\G_1$ is linearly reductive, \cite[3.15]{doi1989hopf} yields that $k\G^0:k\G_1$ is separable.
Therefore, $N$ is isomorphic to a direct summand of $\ind_{\G_1}^{\G^0} g.W(lp+i)$. As $p$ does not divide $\dim_kY(i,g,l)$, 
\cite[2.1.4]{farnsteiner2009group} shows that $Y(i,g,l)$ is $\G^0/\G_1$-regular.
As $\G_{Y(i,g,l)}\cong P\cS\cC_{(np^r)}$ we have that $k(\G_{Y(i,g,l)}/\G_1)=\bigoplus_{\beta\in X(\G_{Y(i,g,l)}/\G_1}k_\beta$ is
a decomposition into simple $\G_{Y(i,g,l)}/\G_1$-modules. Hence an application of \ref{decomp of induced stable modules}
yields a decomposition \[\ind_{\G_1}^{\G_{Y(i,g,l)}}Y(i,g,l)\cong\bigoplus_{\beta\in X(\G_{Y(i,g,l)}/\G_1)}Y(i,g,l)\otimes_kk_\beta.\]
Since $\res_{\G_1}^{\G_{Y(i,g,l)}}Y(i,g,l)$ is isomorphic to $g.W(lp+i)$ the
module $\ind_{\G^0}^{\G_{Y(i,g,l)}}N$ is a direct summand of $\ind_{\G_1}^{\G_{Y(i,g,l)}}Y(i,g,l)$.
The group algebra $k\G$ is isomorphic to the skew group algebra $k\G^0\#k\G(k)$.  Therefore an application of \cite[4.5.2]{karpilovsky1987algebraic}
yields that the indecomposable direct summands of $\ind_{\G^0}^\G N$ are isomorphic to $\ind_{\G_{Y(i,g,l)}}^\G (Y(i,g,l)\otimes_kk_\alpha)$ with
a unique $\alpha\in X(\G_{Y(i,g,l)}/\G_1)$.\\
If $g\not\in\{1,w_0\}$, then there are unique $l\in\N$ and $i\in\{0,\dots,p-2\}$ with $N\cong X(i,g,l)$. Similar arguments as above now yield the second assertion. \\
Now let $h\in C$ with $M\cong \ind_{\G_{Y(i,h,l)}}^\G (Y(j,h,m)\otimes_kk_\beta)$ for $\beta$ in 
$X(\G_{Y(j,h,m)}/\G_1)$ respectively $X(\G_{Y(j,h,m)}/\G^0)$, $m\in\N$ and $j\in\{0,\dots,p-2\}$.
Then
\[\res_{\G_1}^\G M\cong \bigoplus_{u\in \G(k)}ug.W(lp+i)\cong \bigoplus_{v\in \G(k)}vh.W(mp+j).\]
Thanks to \ref{X(i,g,l)}, we obtain $m=l$, $j=i$ and there is $v\in\G(k)$ such that $vh$ is represented in $C$ by $g$. Therefore $h$ and $g$ are in the same $\G(k)$-orbit and we obtain
a unique element $\tilde{g}\in C_\G$ with $M\cong\ind_{\G_{Y(i,\tilde{g},l)}}^\G (Y(i,\tilde{g},l)\otimes_kk_\alpha)$.
\end{proof}

\begin{thm}\label{modules in euclidean components}
Let $\G$ be a finite subgroup scheme of $PSL(2)$ with $\G_1\cong PSL(2)_1$ and tame principal block.
Let $M$ be an indecomposable $\G$-module of complexity $2$. 
Then $M$ is isomorphic to one of the following pairwise non-isomorphic $\G$-modules:
\begin{enumerate}
\item $\tilde{V}(d)\otimes_kS$ or $M\cong \tilde{V}(d)^*\otimes_kS$ for $d\geq p$ with $d\not\equiv-1\;(\mod p)$ and a simple $\G/\G_1$-module $S$ 
\item $\tilde{V}(i)\otimes_kS$ with $0\leq i\leq p-1$ and a simple $\G/\G_1$-module $S$.
\end{enumerate}
\end{thm}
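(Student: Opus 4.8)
The plan is to mirror the strategy used for the complexity-$1$ case in Theorem \ref{modules in tubes}, now pushing the modules through the separable chain $k\G^0 : k\G_1$ and invoking the known classification of complexity-$2$ modules over $SL(2)_1T_r$ from \cite{farnsteiner2009group}. First I would use that $k\G \cong k\G^0 \# k\G(k)$ with $\G(k)$ linearly reductive, so by \cite[3.15]{doi1989hopf} the extension $k\G : k\G^0$ is separable; hence there is an indecomposable direct summand $N$ of $\res_{\G^0}^\G M$ such that $M$ is a direct summand of $\ind_{\G^0}^\G N$. As in the proof of Theorem \ref{modules in tubes}, the complexity estimates (\cite[II.2]{shanghai}) force $\cx_{k\G^0}(N) = 2$ as well. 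Since $\G_1 \cong PSL(2)_1$ and the principal block is tame, $\G^0$ is (a quotient of) $SL(2)_1 T_r$ for some $r$, so the classification of indecomposable $\G^0$-modules of complexity $2$ tells us $N$ is one of $V(d)\otimes k_\lambda$, $V(d)^*\otimes k_\lambda$, or $V(i)\otimes k_\lambda$ — these are exactly the modules lying in Euclidean components. A key point to verify is that the $PSL(2)_1$-action forces the weight $\lambda$ to be trivial on $\G_1$ in an appropriate sense, so that $N$ already descends; this is the twisting bookkeeping that produced $\tilde V(d)$ from $V(d)$ in the preceding discussion, and I would cite that construction directly.

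Next I would analyse $\ind_{\G^0}^\G N$. The modules $V(d)$ (and $V(d)^*$, $V(i)$) are restrictions of genuine $PSL(2)$-modules, hence are $\G$-modules themselves; in particular their restriction to $\G^0$ is $\G(k)$-stable, so $\G_N = \G$. Therefore the situation is the "fully stable" one: I would apply the decomposition result \ref{decomp of induced stable modules} with $\cN = \G^0$ and the multiplicative quotient $\G/\G^0 \cong \G(k)$ (here $\G(k)$ linearly reductive gives the multiplicative hypothesis after the usual identification, or one argues directly via \cite[4.5.2]{karpilovsky1987algebraic} for the skew group algebra $k\G^0 \# k\G(k)$). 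This yields that the indecomposable summands of $\ind_{\G^0}^\G N$ are precisely $\tilde V(d)\otimes_k S$ (resp. $\tilde V(d)^*\otimes_k S$, $\tilde V(i)\otimes_k S$) as $S$ ranges over the simple $\G/\G_1$-modules, and that two such are isomorphic iff the corresponding simple modules $S$ agree. That gives the list in (1) and (2); the condition $d \not\equiv -1 \pmod p$ is inherited from the $SL(2)_1T_r$-classification (for $d \equiv -1$ the Weyl module is projective, hence contributes nothing to the stable quiver).

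Finally I would check pairwise non-isomorphism and exhaustiveness. Exhaustiveness: every indecomposable $\G$-module $M$ of complexity $2$ arises as a summand of $\ind_{\G^0}^\G N$ for some $N$ as above, and we have just described all such summands. Non-isomorphism: if $\tilde V(d)\otimes S \cong \tilde V(d')\otimes S'$ (or across the three families), restrict to $\G^0$ and compare with the uniqueness statement in the $SL(2)_1T_r$-classification to pin down $d = d'$ and then $S \cong S'$; the three families $\{V(d)\}$, $\{V(d)^*\}$, $\{V(i)\}$ are already pairwise distinct over $\G^0$ except for the overlaps at small $d$ which are excluded by the ranges $d \geq p$ versus $0 \leq i \leq p-1$. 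I expect the main obstacle to be the twisting/descent step: carefully tracking which characters of $\G^0$ extend, and ensuring the passage from $V(d)$-type $\G^0$-modules to the $\tilde V(d)$-type $\G$-modules is compatible with the decomposition in \ref{decomp of induced stable modules} — in particular that the regularity hypothesis there is met, which should follow as in the proof of Theorem \ref{modules in tubes} from $p \nmid \dim$ considerations together with \cite[2.1.4]{farnsteiner2009group}.
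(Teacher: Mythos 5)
Your overall strategy (separable extension, complexity estimate, classification of the restricted module, then the stable decomposition result) is the right one, but you have routed it through $\G^0$ where the paper routes it through $\G_1$, and this creates a genuine gap. Descending only to $\G^0$, the classification of complexity-$2$ modules over $SL(2)_1T_r$ gives $N\cong V(d)\otimes_k k_\lambda$ (etc.) with $\lambda\in X(\G^0/\G_1)\cong X(\mu_{(p^{r-1})})$ possibly nontrivial. Your assertion that such an $N$ is automatically $\G(k)$-stable, so that $\G_N=\G$ and the ``fully stable'' case applies, is unjustified: while $V(d)$ itself extends to $\G$ (as $\tilde V(d)$), the twisting character $k_\lambda$ of $\G^0/\G_1$ need not be fixed by $\G(k)$ (for instance $w_0$ inverts the torus and hence sends $\lambda$ to $-\lambda$), so for $\lambda\neq 0$ the module $V(d)\otimes_k k_\lambda$ need not extend to a $\G$-module at all, and Proposition \ref{decomp of induced stable modules} is not applicable to it. Moreover, even in the stable case, taking $\cN=\G^0$ in that proposition indexes the summands of $\ind_{\G^0}^\G N$ by the projective indecomposable $\G/\G^0$-modules, not by the simple $\G/\G_1$-modules as you claim; reconciling the two parametrizations is exactly the bookkeeping you would still owe.

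The paper avoids all of this by applying \cite[3.15]{doi1989hopf} to the extension $k\G:k\G_1$ directly (legitimate since $\G/\G_1$ is linearly reductive), so that $N$ is an indecomposable summand of $\res_{\G_1}^\G M$ with $M\mid\ind_{\G_1}^\G N$. Over $\G_1\cong SL(2)_1$ the complexity-$2$ classification yields the untwisted modules $V(d)$, $V(d)^*$, $V(i)$, which are precisely the restrictions of the $\G$-modules $\tilde V(d)$, $\tilde V(d)^*$, $\tilde V(i)$ constructed earlier in the section; these are $\G^0/\G_1$-regular by \cite[2.1.4]{farnsteiner2009group} since $p\nmid d+1$, so one single application of Proposition \ref{decomp of induced stable modules} with $\cN=\G_1$ gives the summands $\tilde V(d)\otimes_k S$ with $S$ a simple $\G/\G_1$-module, together with the uniqueness of $S$. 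If you want to keep your two-step route, you would need to observe that $V(d)\otimes_k k_\lambda$ is itself a summand of $\ind_{\G_1}^{\G^0}V(d)$, so that $\ind_{\G^0}^\G N$ is a summand of $\ind_{\G_1}^\G V(d)$ --- at which point you are back to the one-step argument anyway.
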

\begin{proof}
Since $\G/\G_1$ is linearly reductive, \cite[3.15]{doi1989hopf} yields that
the extension $k\G:k\G_1$ is separable, so that there is 
an indecomposable direct summand $N$ of $\res_{\G_1}^\G M$ such that $M$ is a direct summand of $\ind^\G_{\G_1} N$. 
As in the proof of \ref{modules in tubes}, we obtain $\cx_{\G^0}(N)=\cx_{\G}(M)=2$.
Since $\G_1$ is isomorphic to $PSL(2)_1\cong SL(2)_1$, we obtain that $N$ is isomorphic to 
$V(d)$, $V(d)^*$ or $V(i)$ for a unique $d\geq p$ with $d\not\equiv-1\;(\mod p)$ or a unique $0\leq i\leq p-1$.
As noted at the beginning of this section these modules are the restrictions of the $\G$-modules 
$\tilde{V}(d)$, $\tilde{V}(d)^*$ and $\tilde{V}(i)$.
Hence we can assume that $N$ is a $\G$-module which is isomorphic to one of these modules.
By \cite[2.1.4]{farnsteiner2009group}, the module $N$
is $\G^0/\G_1$-regular and \ref{decomp of induced stable modules} yields that the indecomposable direct summands of $\ind_{\G_1}^\G N$ are of the
form $N\otimes_kS$ for a unique $\G/\G_1$-module $S$.
\end{proof}
\begin{rmk}
	\begin{enumerate}
\item The modules in \ref{modules in euclidean components} are exactly the modules which appear in Euclidean components
of the Auslander-Reiten quiver of $\G$. The Auslander-Reiten quiver of $\G$ is classified by \cite[7.4]{farnsteiner2006polyhedral}
and the tree types of the Euclidean components coincide with the McKay-quiver of $\G/\G_1$.
This fact leads to a direct connection between these quivers which will be discussed in a separate paper.
\item	The modules listed in (2) are exactly the simple $\G$-modules.
\end{enumerate}
\end{rmk}
With these results in hand, one is able to write down the classification of modules 
for the amalgamated polyhedral group schemes. Moreover, we are able to assign to each module its component in the Auslander-Reiten quiver
by using the results in this work.
As an example, we will work out the representation theory of the amalgamated dihedral group schemes.\\
Let $m\geq 2$, $m=np^r$ with $(n,p)=1$.
By \cite[7.4]{farnsteiner2006polyhedral}, the stable Auslander-Reiten quiver of the $\frac{p-1}{2}$ non-simple blocks of
$kP\cS\cQ_{(m)}$ consists of two Euclidean components of type $\Z[\tilde{D}_{np^{r-1}+2}]$, two exceptional tubes
$\Z[A_\infty]/(\tau^{np^{r-1}})$, four exceptional tubes
$\Z[A_\infty]/(\tau^2)$ and infinitely many homogeneous tubes. \\
The representation theory of the amalgamated dihedral group schemes is the most complicated case.
For $n=2$ and $r=1$ we are not able to distinguish the two cases of exceptional tubes and all exceptional tubes are of rank $2$.
Hence, for simplification, we will not consider this situation, as it has to be handled a little bit differently.
\begin{prop}
Let $m\geq 2$ with $m=p^rn$ and $(n,p)=1$. Assume $r>1$ or $n>2$. Let $\G=P\cS\cQ_{(m)}$ and assume
$\{1,h_4,h_4w_0\}\subseteq C_\G$. Let $M$ be an indecomposable non-projective $\G$-module.
Then $M$ is isomorphic to one of the modules belonging to the following list of pairwise non-isomorphic $\G$-modules:
\begin{enumerate}[(i)]
\item $\tilde{V}(d)\otimes_kS$, $\tilde{V}(d)^*\otimes_kS$, $\tilde{V}(i)\otimes_kS$ for $d\geq p$, $d\not\equiv -1\mod p$,
		 $S$ a simple $\G/\G_1$-module and and $0\leq i\leq p-1$.
		(Modules belonging to Euclidean components)
\item $\ind_{P\cS\cC_{(m)}}^\G Y(i,1,l)\otimes_kk_\lambda$ for $i\in\{0,\dots p-2\}$, $l\in\N$ and $\lambda\in X(\mu_{(np^{r-1})})$.
		(Modules belonging to exceptional tubes of rank $np^{r-1}$)
\item $\ind_{P\cS\cQ_{(p^r)}}^\G Y(i,h_4w_0^j,l)\otimes_kk_\lambda$ for $j\in\{0,1\}$, $i\in\{0,\dots p-2\}$, $l\in\N$ and $\lambda\in X(\mu_{(2)})$.
		(Modules belonging to exceptional tubes of rank $2$)
\item $\ind_{P\cS\cC_{(p^r)}}^\G Y(i,g,l)$ for $g\in C_\G\setminus\{1,h_4,h_4w_0\}$, 
$i\in\{0,\dots p-2\}$ and $l\in\N$.
				(Modules belonging to homogeneous tubes)
\end{enumerate}
\end{prop}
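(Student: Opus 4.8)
The plan is to organise the argument by the complexity of $M$. Every non-simple block of $k\G$ is tame by \cite[7.4]{farnsteiner2006polyhedral}, hence has complexity $2$; since a non-projective module lies in a non-simple block, $\cx_\G(M)\in\{1,2\}$, and by the description of the Auslander--Reiten quiver of $kP\cS\cQ_{(m)}$ in \cite[7.4]{farnsteiner2006polyhedral} the module $M$ lies in a Euclidean component when $\cx_\G(M)=2$ and in a tube when $\cx_\G(M)=1$. The first case produces list (i), the second lists (ii)--(iv).

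If $\cx_\G(M)=2$: here $\G$ is a finite subgroup scheme of $PSL(2)$ with $\G_1=\pi(SL(2)_1)\cong PSL(2)_1$ and, by \cite[7.4]{farnsteiner2006polyhedral}, tame principal block, so Theorem~\ref{modules in euclidean components} applies and identifies $M$ with a module in list (i) (and, by the remark following that theorem, these are precisely the modules in the two Euclidean components).

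If $\cx_\G(M)=1$: Theorem~\ref{modules in tubes} supplies unique $l\in\N$, $i\in\{0,\dots,p-2\}$, $g\in C_\G$ and a character $\alpha$, lying in $X(\G_{Y(i,g,l)}/\G_1)$ when $g\in\{1,w_0\}$ and in $X(\G_{Y(i,g,l)}/\G^0)$ otherwise, with $M\cong\ind_{\G_{Y(i,g,l)}}^\G(Y(i,g,l)\otimes_k k_\alpha)$. It then remains to pin down $\G_{Y(i,g,l)}$ and the relevant character group, which I would do by a case analysis on the $\G(k)$-orbit of $g$, combining the Proposition preceding Theorem~\ref{modules in tubes} (whose parts (a)--(d) place $\G_{Y(i,g,l)}$, up to conjugacy, as $P\cS\cQ_{(p^r)}$ or as some $P\cS\cC_{(np^r)}$) with \ref{stabilizers equal}, \ref{inertia group cyclic for tubes} and the tables of \cite[3.3]{farnsteiner2006polyhedral} and \cite[4.3.2]{farnsteiner2012extensions}. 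Since $\pi(w_0)\in\G(k)$, the points $[e]$ and $[w_0.e]$ of $\P(V_{\sl2})$ lie in a single $\G(k)$-orbit, so one may take $g=1$ there; the restriction of $Y(i,1,l)$ to $SL(2)_1$ is then the $B$-stable module $W(lp+i)$, so by \ref{stabilizers equal} its stabiliser in the preimage $\hat G\subseteq SL(2,k)$ of $\G(k)$ equals $\hat G\cap B$, which yields $\G_{Y(i,1,l)}=P\cS\cC_{(m)}=P\cS\cC_{(np^r)}$ and $X(\G_{Y(i,1,l)}/\G_1)\cong X(\mu_{(np^{r-1})})$; this gives list (ii). If $g$ lies in the orbit of $h_4$ or of $h_4w_0$ (two distinct orbits with representatives in $C_\G$ by hypothesis), then $V_{\sl2}(Y(i,g,l))$ is one of the two $\cH_4$-fixed points $[h_4.e]$, $[h_4w_0.e]$ in $\P(V_{\sl2})$, whose stabiliser in $\hat G$ is exactly $\cH_4(k)$, so $\G_{Y(i,g,l)}=\pi(SL(2)_1T_r\cH_4)=P\cS\cQ_{(p^r)}$ and $X(\G_{Y(i,g,l)}/\G^0)\cong X(\mu_{(2)})$; this gives list (iii). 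Finally, for $g\in C_\G\setminus\{1,h_4,h_4w_0\}$ the point $[g.e]$ lies in none of the $\G(k)$-orbits of $[e]$, $[h_4.e]$, $[h_4w_0.e]$ (the only $\G(k)$-orbits in $\P(V_{\sl2})$ with non-trivial stabiliser, by the Proposition preceding Theorem~\ref{modules in tubes} together with \cite[3.3]{farnsteiner2006polyhedral}), so $\G(k)_{Y(i,g,l)}=\G(k)_{[g.e]}$ is trivial, whence $\G_{Y(i,g,l)}=\G^0=P\cS\cC_{(p^r)}$, the character $\alpha$ is trivial, and $M\cong\ind_{P\cS\cC_{(p^r)}}^\G Y(i,g,l)$; this gives list (iv). The parenthetical tube-rank labels follow from \ref{multiplicity of tubes for induction with cyclic group} and \ref{decomp of induced stable modules} together with the known ranks of the tubes of $\Gamma_s(kSL(2)_1T_r)$, and the hypothesis $r>1$ or $n>2$ is exactly what keeps the two kinds of rank-$2$ tubes in (ii) and (iii) apart.

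Finally, the list is irredundant: a complexity-$1$ module is never isomorphic to a complexity-$2$ module, so (i) is disjoint from (ii)--(iv); the modules in (i) are pairwise non-isomorphic by the uniqueness in Theorem~\ref{modules in euclidean components}, and those in (ii)--(iv) by the uniqueness statement of Theorem~\ref{modules in tubes}, since restriction to $\G_1$ recovers $l$ and $i$ through \ref{X(i,g,l)} and $g$ up to its $\G(k)$-orbit. I expect the main obstacle to be the bookkeeping in the complexity-$1$ case: determining $\G_{Y(i,g,l)}$ on the nose (not merely up to conjugacy) for the chosen orbit representatives, and verifying that the pertinent character groups are indeed $X(\mu_{(np^{r-1})})$ and $X(\mu_{(2)})$, so that the conclusion matches the precise shape of lists (ii) and (iii).
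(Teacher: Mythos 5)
Your classification of the modules themselves follows the paper's route almost exactly: split by complexity, invoke Theorem \ref{modules in euclidean components} for complexity $2$, invoke Theorem \ref{modules in tubes} for complexity $1$, and then pin down $\G_{Y(i,g,l)}$ and the character group by computing the stabilizer $G_{[g.e]}$ of the point $g.[e]$ in $\P(V_{\sl2})$ via \ref{stabilizers equal} for each of the three orbit types of $g$. That part of your argument is sound and matches the paper's first step.

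The genuine gap is in the last sentence of your complexity-$1$ discussion, where you assert that ``the parenthetical tube-rank labels follow from \ref{multiplicity of tubes for induction with cyclic group} and \ref{decomp of induced stable modules} together with the known ranks of the tubes.'' They do not follow directly: Lemma \ref{multiplicity of tubes for induction with cyclic group}(d) only gives the \emph{upper bound} $q\leq\vert G_{Y(i,g,l)}\vert\, s$ on the rank $q$ of the tube containing $Y(i,g,l)\otimes_k k_\alpha$, so you cannot read off from it which tube a given induced module lands in. The paper closes this by arguing in the opposite direction and then counting: it shows that \emph{if} the component is a tube of rank $np^{r-1}$ (resp.\ rank $2$), then the inequality forces $\vert G_{Y(i,g,l)}\vert=n$, $s=p^{r-1}$ and hence $g=1$ (resp.\ $g\in\{h_4,h_4w_0\}$) --- this is precisely where the hypothesis $r>1$ or $n>2$ enters --- and then compares the number of $\G$-modules of fixed quasi-length $l$ lying in exceptional tubes of rank $np^{r-1}$, namely $(p-1)np^{r-1}$ across the $\frac{p-1}{2}$ non-simple blocks, with the number of modules in list (ii) for that $l$, which is also $(p-1)np^{r-1}$; since the latter all lie in such tubes only if the counts agree, the pigeonhole argument forces every module of (ii) into a rank-$np^{r-1}$ tube, and similarly for (iii), leaving (iv) for the homogeneous tubes. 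You also need \cite[4.5.10]{marcus1999representation} to transport the component of $Y(i,g,l)\otimes_k k_\alpha$ over $\G_{Y(i,g,l)}$ to the component of $M$ over $\G$, and the quasi-length preservation of \ref{multiplicity of tubes for induction with cyclic group}(c) to make the count of quasi-length-$l$ modules meaningful; neither step is present in your sketch. Without this counting argument the assignment of the families (ii)--(iv) to their respective tube ranks is unproved.
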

\begin{proof}
This proof has two parts. The first part uses the results of this section to determine the modules as listed above.
In the second part we will use the results about the Auslander-Reiten quiver and stabilizers to assign these modules to
their components in the Auslander-Reiten quiver.
\begin{enumerate}
\item
If $M$ has complexity $2$ it is by \ref{modules in euclidean components} isomorphic to one of the modules in (i). Hence 
we can assume that $M$ has complexity $1$.
Then an application of \ref{modules in tubes} yields unique $l\in\N$, $i\in\{0,\dots,p-2\}$ and $g\in C_\G$ such that $M$ is isomorphic to 
$\ind_{\G_{Y(i,g,l)}}^\G (Y(i,g,l)\otimes_kk_\alpha)$ for a unique character $\alpha$.\\
Let $G:=\G(k)$. By \ref{stabilizers equal} we know $G_{Y(i,g,l)}=G_{[g.e]}$. By computing the stabilizers for the action of $G$
on $\P(V_{\sl2})$ we obtain the following cases:
\begin{enumerate}
\item
If $g=1$, then $\G_{Y(i,1,l)}=P\cS\cC_{(m)}$ and $\alpha\in X(\G_{Y(i,1,l)}/\G_1)\cong X(\mu_{(np^{r-1})})$.\\
\item
If $g\in\{h_4,h_4w_0\}$, then $\G_{Y(i,g,l)}= P\cS\cQ_{(p^r)}$ and $\alpha\in X(\G_{Y(i,g,l)}/\G^0)\cong X(\mu_{(2)})$.\\
\item
Otherwise, $\G_{Y(i,g,l)}= P\cS\cC_{(p^r)}$ and $\alpha\in X(\G_{Y(i,g,l)}/\G^0)\cong X(\mu_{(1)})=\{1\}$.\\
\end{enumerate}
These are exactly the cases (ii)-(iv).
\item
If $M$ has complexity $2$ then by \cite[3.3]{farnsteiner2007support} the component of $M$ is not a tube and therefore Euclidean. \\
Let $M$ be of complexity $1$ and $g\in C_\G$ be arbitrary. Thanks to \cite[4.5.10]{marcus1999representation}, the component $\Theta\subseteq\Gamma_s(\G_{Y(i,g,l)})$ 
of the $\G_{Y(i,g,l)}$-module $Y(i,g,l)\otimes_kk_\alpha$ is isomorphic to the component of $M$. As $M$ has complexity $1$,
an application of \cite[3.3]{farnsteiner2007support} yields that the component $\Theta$ is a tube.
Let $\Xi\subseteq\Gamma_s(\G^0)$ be the component of $\res_{\G^0}^{\G_{Y(i,g,l)}}Y(i,g,l)$. Then $\Xi$ is a tube of rank $s\in\{1,p^{r-1}\}$. Denote by
$q$ the rank of $\Theta$.  Due to \ref{multiplicity of tubes for induction with cyclic group}, we obtain $q\leq \vert G_{Y(i,g,l)}\vert s$.
An application of \ref{inertia group cyclic for tubes} yields that $\vert G_{Y(i,g,l)}\vert= \vert G_{[g.e]}\vert\leq n$.\\
Assume that $\Theta$ is a tube of rank $np^{r-1}$. Then $np^{r-1}\leq \vert G_{Y(i,g,l)}\vert s\leq np^{r-1}$ and therefore $\vert G_{Y(i,g,l)}\vert=n$ and $s=p^{r-1}$.
If $r>1$, then $V_{\sl2}(\Xi)\in\{ke,kf\}$ and therefore $g\in\{1,w_0\}$. If $n>2$, the only points in $\P(V_{\sl2})$ stabilized by $G_{Y(i,g,l)}=G_{[g.e]}$
are $[e]$ and $[w_0.e]$. Hence we have in both cases $g=1$, as $w_0$ is in the same $G$-orbit as $1$.
The quasi-length of $\res_{\G^0}^{\G_{Y(i,g,l)}}Y(i,g,l)$ in $\Xi$ is $l$. By \ref{multiplicity of tubes for induction with cyclic group},
the morphism $\res_{\G^0}^{\G_{Y(i,g,l)}}:\Theta\rightarrow\Xi$
preserves the quasi-length, so that the module $Y(i,g,l)\otimes_kk_\beta$ has also quasi-length $l$. As 
the number of  $\G$-modules of quasi-length $l$ belonging to exceptional tubes of rank $np^{r-1}$
is $(p-1)np^{r-1}$ and this number equals the number of
modules in (ii) for a fixed $l$, all these modules belong to an exceptional tube of rank $np^{r-1}$.\\
Now assume that $\Theta$ is a tube of rank $2$. Then $g\neq1$ and therefore $s=1$ and $\vert G_{Y(i,g,l)}\vert\leq 2$.
Since $2\leq\vert G_{Y(i,g,l)}\vert s=\vert G_{Y(i,g,l)}\vert$ we obtain $\vert G_{Y(i,g,l)}\vert=2$. Therefore $M$ is isomorphic to a module in (iii).
With the same arguments as above, all modules in (iii) belong to an exceptional tube of rank $2$. \\
Now the remaining modules in (iv) have to belong to homogeneous tubes.
\end{enumerate}
\end{proof}
\begin{rmk} The representation theory of the remaining amalgamated polyhedral group schemes can be analyzed
		with these methods in a similar way.
\end{rmk}
\subsubsection*{Acknowledgement}
The results of this article are part of my doctoral thesis, which I am currently writing at the University
of Kiel. I would like to thank my advisor Rolf Farnsteiner for his continuous support as well as for helpful remarks.
Furthermore, I thank the members of my working group for proofreading.

\subsubsection*{Funding}
This work was supported by the D.F.G. priority program SPP1388 'Darstellungstheorie'.

\markboth{\nomname}{\nomname}

\footnotesize
\bibliographystyle{abbrv}
\bibliography{paper}
\noindent
\textsc{Christian-Albrechts-Universität zu Kiel, Ludewig-Meyn-Str. 4, 24098 Kiel, Germany}
\textit{E-mail address:} \textsf{kirchhoff@math.uni-kiel.de}
\end{document}